\newtheorem{theorem}{Theorem}[section]
\newtheorem{remark}[theorem]{Remark}
\newtheorem{definition}[theorem]{Definition}
\newtheorem{lemma}[theorem]{Lemma}
\newtheorem{fact}[theorem]{Fact}
\newtheorem{proposition}[theorem]{Proposition}
\newtheorem{corollary}[theorem]{Corollary}
\newtheorem{problem}[theorem]{Problem}
\newtheorem{conjecture}[theorem]{Conjecture}
\newtheorem{claim}{Claim}[section]
\newcommand{\lf}{\left\lfloor}
\newcommand{\rf}{\right\rfloor}
\newcommand{\lc}{\left\lceil}
\newcommand{\rc}{\right\rceil}
\begin{document}
\textwidth 150mm \textheight 225mm
\title{Growth rates of the bipartite Erd\H{o}s-Gy\'{a}rf\'{a}s function
\thanks{Supported by the National Natural Science Foundation of China (Nos. 12271439 and 11871398), the Fundamental Research Funds for the Central Universities (WK0010000088),
and China Scholarship Council (No. 201906290174).}
}
\author{{Xihe Li$^{1,2,3}$, Hajo Broersma$^{2,}$\thanks{Corresponding author.}, Ligong Wang$^{1}$}\\
{\small $^{1}$ School of Mathematics and Statistics,}\\ {\small Northwestern Polytechnical University, Xi'an, Shaanxi 710129, PR China}\\
{\small $^{2}$ Faculty of Electrical Engineering, Mathematics and Computer Science,}\\ {\small University of Twente, P.O. Box 217, 7500 AE Enschede, The Netherlands}\\
{\small $^{3}$ School of Mathematical Sciences,}\\ {\small University of Science and Technology of China, Hefei, Anhui 230026, PR China}\\
{\small E-mail: lxhdhr@163.com; h.j.broersma@utwente.nl; lgwangmath@163.com}}
\date{}
\maketitle
\begin{center}
\begin{minipage}{120mm}
\vskip 0.3cm
\begin{center}
{\small {\bf Abstract}}
\end{center}
{\small Given two graphs $G, H$ and a positive integer $q$, an $(H,q)$-coloring of $G$ is an edge-coloring of $G$ such that every copy of $H$ in $G$ receives at least $q$ distinct colors. The bipartite Erd\H{o}s-Gy\'{a}rf\'{a}s function $r(K_{n,n}, K_{s,t}, q)$ is defined to be the minimum number of colors needed for $K_{n,n}$ to have a $(K_{s,t}, q)$-coloring. For balanced complete bipartite graphs $K_{p,p}$, the function $r(K_{n,n}, K_{p,p}, q)$ was studied systematically in [Axenovich, F\"{u}redi and Mubayi, {\it J. Combin. Theory Ser. B} {\bf 79} (2000), 66--86]. In this paper, we study the asymptotic behavior of this function for complete bipartite graphs $K_{s,t}$ that are not necessarily balanced. Our main results deal with thresholds and lower and upper bounds for the growth rate of this function, in particular for (sub)linear and (sub)quadratic growth. We also obtain new lower bounds for the balanced bipartite case, and improve several results given by Axenovich, F\"{u}redi and Mubayi. Our proof techniques are based on an extension to bipartite graphs of the recently developed Color Energy Method by Pohoata and Sheffer and its refinements, and a generalization of an old result due to Corr\'{a}di.
\vskip 0.1in \noindent {\bf Keywords}: \ Generalized Ramsey numbers; Color Energy Method; Corr\'{a}di's Lemma; Tur\'{a}n numbers \vskip
0.1in \noindent {\bf AMS Subject Classification (2020)}: \ 05C15; 05C35; 05C55; 05D10
}
\end{minipage}
\end{center}

\section{Introduction}
\label{sec:ch-intro}

Our work is motivated by recent results on the Erd\H{o}s-Gy\'{a}rf\'{a}s function and its extension to bipartite graphs, as well as a recently developed proof technique called the Color Energy Method and its refinements. For our purpose of studying the behavior and thresholds for different growth rates of the analogue of the Erd\H{o}s-Gy\'{a}rf\'{a}s function for bipartite graphs, we extend this proof approach to bipartite graphs. Before we can present our results we need a short introduction and some terminology.

For two graphs $G, H$ and an integer $q$ with $2\leq q\leq |E(H)|$, an \emph{$(H, q)$-coloring} of $G$ is an edge-coloring of $G$ such that every copy of $H$ in $G$ receives at least $q$ distinct colors. Let $r(G, H, q)$ be the minimum number of colors that are needed for $G$ to have an $(H, q)$-coloring. In the case $G=K_n$ and $H=K_p$, $r(K_n, K_p, q)$ is usually written as $f(n,p,q)$ and known as the Erd\H{o}s-Gy\'{a}rf\'{a}s function. This function was first introduced by Erd\H{o}s and Shelah \cite{Erd1,Erd2} and studied in depth by Erd\H{o}s and Gy\'{a}rf\'{a}s \cite{ErGy} in 1997.

In the past two decades, quite a few articles on the topic of the Erd\H{o}s-Gy\'{a}rf\'{a}s problem have appeared. For the Erd\H{o}s-Gy\'{a}rf\'{a}s function $f(n,p,q)$, we refer the interested reader to \cite{Axe,Cam,CaHe,CFLS2,FoSu,Mub04,SaSe1,SaSe2}. For the bipartite Erd\H{o}s-Gy\'{a}rf\'{a}s function $r(K_{n,n}, K_{p,p}, q)$, we refer to \cite{AxFM,SaSe3}. In \cite{Kru}, Krueger studied the asymptotic behavior of $r(K_n, P_m, q)$. The Erd\H{o}s-Gy\'{a}rf\'{a}s function was also studied in the setting of hypergraphs, see \cite{CFLS1,Mub16}. In \cite{LiBW}, the authors investigated the Erd\H{o}s-Gy\'{a}rf\'{a}s function within the framework of Gallai-colorings. In \cite{FoPS}, Fox, Pach and Suk studied the semi-algebraic variant of the Erd\H{o}s-Gy\'{a}rf\'{a}s function. A chromatic number version of the Erd\H{o}s-Gy\'{a}rf\'{a}s function was considered in \cite{CFLS1,LeTr}. For more information on this topic, we refer the interested reader to \cite[Section~3.5.1]{CoFS} and \cite[Section~7]{MuSu}.

The results we will present next all deal with the growth rates of the bipartite Erd\H{o}s-Gy\'{a}rf\'{a}s function $r(K_{n,n}, K_{s,t}, q)$. In 2000, Axenovich, F\"{u}redi and Mubayi \cite{AxFM} studied $r(K_{n,n}, K_{p,p}, q)$ systematically. In particular, they determined various thresholds for different growth rates of $r(K_{n,n}, K_{p,p}, q)$; see Table~\ref{tab:rnn}.

\begin{table}[htb]
\begin{center}
\label{tab:rnn}
\begin{tabular}{l|l|l}
  \cline{1-3}
  $q$ & $r'\colonequals r(K_{n,n}, K_{p,p}, q)$ & Remark \\
  \cline{1-3}
  $p^2$ & $r'=n^2$ & \\
   $p^2-\lfloor p/2\rfloor+1$ & $r'=n^2-\lfloor p/2 \rfloor+1$ &  threshold for $r'=n^2-O(1)$\\
   $p^2-\lfloor p/2\rfloor$ & $r'\leq n^2-\lfloor n/2 \rfloor$ & \\
   $p^2-\lfloor (2p-1)/3\rfloor+1$ & $r'>n^2-2\lfloor (p-2)/3 \rfloor(n-1)$ &  threshold for $r'=n^2-O(n)$\\
   $p^2-\lfloor (2p-1)/3\rfloor$ & $r'< n^2-c_pn^{1+\varepsilon_p} $ & \\
   $p^2-p+2$ & $C_p(n^2-n)\leq r' \leq (1-C'_p)n^2$ &  threshold for quadratic $r'$\\
   $p^2-p+1$ & $n^{4/3}-2n^{2/3}+1\leq r' \leq c'_pn^{2-2/p}$ & \\
   $p^2-2p+3$ & $n/(2p-2)\leq r' \leq c_p''n$ &  threshold for linear $r'$\\
   $p^2-2p+2$ &  $r'\leq c'''_pn^{1-1/(2p-1)}$ & \\
   2 & $r'\geq(1+o(1))n^{1/p}$ & \\
  \cline{1-3}
\end{tabular}
\caption{Known results for $r(K_{n,n}, K_{p,p}, q)$ obtained in \cite{AxFM}.}
\end{center}
\vspace{-0.7cm}
\end{table}

The results in~\cite{AxFM} mainly concern the balanced complete bipartite graph $K_{p,p}$. In the present paper, we study the bipartite Erd\H{o}s-Gy\'{a}rf\'{a}s function $r(K_{n,n}, K_{s,t}, q)$, where $K_{s,t}$ is not necessarily balanced. This function generalizes the bipartite Ramsey number, since determining $r(K_{n,n}, K_{s,t}, 2)$ is equivalent to determining the multicolor bipartite Ramsey number of $K_{s,t}$.

We start with two easy observations that we give without proofs. We also note here that all proofs of our main results that follow are postponed to later sections. Most of our proofs require several auxiliary results and techniques which will be introduced in Section~\ref{sec:pre}, in particular our extension of the Color Energy Method to bipartite graphs that we present in Section~\ref{subsec:colorenergy}.

For any bipartite graphs $H, F$ and nonnegative integers $k, \ell$, we have the following two simple properties:
\begin{itemize}
\item[(i)] if $2\leq k\leq \ell\leq e(H)$, then $r(K_{n,n}, H, k)\leq r(K_{n,n}, H, \ell)$;
\item[(ii)] if $H\subseteq F$ and $e(H)- k\geq 2$, then $r(K_{n,n}, H, e(H)-k)\leq r(K_{n,n}, F, e(F)-k)$.
\end{itemize}
Here $e(H)=|E(H)|$ and $H\subseteq F$ denotes that $H$ is a subgraph of $F$.

For $t\geq s\geq 2$ and $2\leq q\leq st$, Axenovich, F\"{u}redi and Mubayi \cite{AxFM} established the following general upper bound (see \cite[Theorem~3.2]{AxFM}):
\begin{equation}\label{eq:UnbalanGener}
r(K_{n,n}, K_{s,t}, q)=O\left(n^{\frac{s+t-2}{st-q+1}}\right).
\end{equation}

As we will show in Section~\ref{sec:unbalanced}, the threshold for linear $r(K_{n,n}, K_{s,t}, q)$ is relatively easy to determine. The following result implies that for all $2\leq q\leq t$, the function $r(K_{n,n}, K_{1,t}, q)$ is linear in $n$, and if $t\geq s\geq 2$, then $st-s-t+3$ is the smallest $q$ such that $r(K_{n,n}, K_{s,t}, q)$ is linear in $n$.

\begin{theorem}\label{th:UnbalanceLin} Let $s, t, q$ be three positive integers.
\begin{itemize}
\item[{\rm (i)}] If $2\leq q\leq \frac{t+1}{2}$, then $\lc\frac{n}{(t-1)/(q-1)}\rc \leq r(K_{n,n}, K_{1,t}, q)\leq \lc\frac{n}{\lf (t-1)/(q-1)\rf }\rc$.
\item[{\rm (ii)}] If $\frac{t+2}{2}\leq q\leq t$, then $r(K_{n,n}, K_{1,t}, q)=n-t+q$.
\item[{\rm (iii)}] If $t\geq s\geq 2$ and $q=st-s-t+3$, then $r(K_{n,n}, K_{s,t}, q)=\Theta(n)$ and $r(K_{n,n}, K_{s,t}, q-1)= O\left(n^{1-\frac{1}{s+t-1}}\right)$.
\end{itemize}
\end{theorem}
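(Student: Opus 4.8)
The plan is to treat the three parts separately, as they have rather different flavours. For part (i) and part (ii), the key observation is that a copy of $K_{1,t}$ in $K_{n,n}$ is just a vertex together with $t$ of its neighbours, so a $(K_{1,t},q)$-coloring is equivalent to the following local condition: for every vertex $v$ and every set of $t$ edges incident with $v$, these edges use at least $q$ colors. Equivalently, at each vertex $v$, each color class among the edges at $v$ has size at most... we need that no $t$ edges at $v$ span fewer than $q$ colors, i.e. if the edges at $v$ use color classes of sizes $a_1 \ge a_2 \ge \cdots$, then every selection of $t$ edges hits $\ge q$ classes; the worst case is to take as many edges as possible from the largest classes, so the condition is precisely $a_1 + a_2 + \cdots + a_{q-1} \le t-1$. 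So for the lower bound I would let $c$ be the number of colors; at a fixed vertex $v$ the $n$ incident edges are distributed into $c$ classes, pick the $q-1$ largest, and since their total is $\le t-1$ while the average class size is $n/c$, I get $(q-1)\cdot \lceil n/c\rceil \ge$ (sum of $q-1$ largest) is not quite the right direction — instead, bounding the sum of ALL classes: $n \le (q-1)(t-1)/(q-1)\cdot$... let me just say: the $q-1$ largest classes have total $\le t-1$, hence each of the remaining $c-(q-1)$ classes has size $\le (t-1)/(q-1)$, so $n \le (t-1) + (c-(q-1))\lfloor (t-1)/(q-1)\rfloor$, rearranged to give the stated lower bound on $c$ (with the constraint $q \le (t+1)/2$ ensuring $q-1 \le t-1-(q-1)$ so that the "largest" bookkeeping is consistent). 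For the upper bound, construct a coloring: partition the $n$ edges at each vertex greedily so that this local constraint holds with all classes of size exactly $\lfloor (t-1)/(q-1)\rfloor$; globally one realizes this with $\lceil n/\lfloor(t-1)/(q-1)\rfloor\rceil$ colors by a suitable product/Latin-square-type construction on the $n\times n$ grid, which is the routine part I would not grind through. Part (ii) is the exact version of the same argument: when $q \ge (t+2)/2$ we have $q-1 \ge t-(q-1)$, the sum of the top $q-1$ classes being $\le t-1$ forces at most $t-q+1$ classes of size $\ge 2$ at each vertex and the rest size $1$, giving $n \le (t-q+1) + (c-(t-q+1))\cdot 1$ in the tightest case, i.e. $c \ge n-t+q$; matching construction again by an explicit grid coloring.

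For part (iii), the linearity of $r(K_{n,n},K_{s,t}, st-s-t+3)$: the lower bound $\Omega(n)$ should follow from a counting/density argument — if we had $o(n)$ colors, some color class is a bipartite graph with $\ge n^2/o(n) = \omega(n)$ edges, which by the Kővári–Sós–Turán bound (this is where "Turán numbers" in the keywords enters) contains a $K_{s,t}$ provided the number of edges exceeds $\mathrm{ex}(K_{n,n}; K_{s,t}) = O(n^{2-1/s})$; more carefully, one wants a monochromatic $K_{s,t}$ OR more directly a copy of $K_{s,t}$ using at most $st-s-t+2 = st-(s+t-2)$ colors, and the point is that $st-(s+t-2)$ colors on the $st$ edges of a $K_{s,t}$ means $s+t-2$ "coincidences", and a $K_{s,t}$ with only $s+t-3$ or fewer coincidences must contain a $2\times 2$ monochromatic grid or similar forced structure — this parallels the known case $p=s=t$ where $p^2 - 2p+2$ is the threshold. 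I would set this up via the bipartite Color Energy Method promised in Section~\ref{subsec:colorenergy}: bound the number of monochromatic (or few-colored) "cherries" and force a $K_{s,t}$. The upper bound $O(n)$ for $q=st-s-t+3$ comes from a construction: take a linear-size set of colors and color $K_{n,n}$ (e.g. based on an affine-plane or algebraic construction, or simply via the product coloring from parts (i)/(ii) applied in both coordinates) so that every $K_{s,t}$ gets $\ge st-s-t+3$ colors — the idea being that the only way to lose colors is to repeat within a row or within a column, and a careful product construction limits the total number of repetitions to $\le s+t-3$. Finally, $r(K_{n,n},K_{s,t},st-s-t+2) = O(n^{1-1/(s+t-1)})$ is exactly the general upper bound \eqref{eq:UnbalanGener} with $q = st-s-t+2$, since $st-q+1 = s+t-1$, so nothing new is needed there — it is just an instantiation.

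The main obstacle I expect is the lower bound in part (iii): proving $r(K_{n,n},K_{s,t}, st-s-t+3) = \Omega(n)$ requires showing that with $o(n)$ colors some copy of $K_{s,t}$ is forced to use $\le st-s-t+2$ colors, and controlling which color-degeneracies can occur inside an $s\times t$ grid is combinatorially delicate — one must rule out all the "spread out" colorings of $K_{s,t}$ with few colors and reduce to a dense monochromatic-ish substructure, then invoke Kővári–Sós–Turán. This is presumably where the bipartite adaptation of the Color Energy Method (counting color-energy of pairs of vertices on one side) does the real work, by showing that low color count forces high color energy, which in turn forces the required $K_{s,t}$ with a color deficiency of at least $s+t-2$. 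The other parts, and the upper bound constructions, I expect to be routine once the right explicit colorings of the $n \times n$ grid are written down.
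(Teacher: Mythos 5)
The genuine gap is in the lower bound of part (iii), which you yourself flag as the main obstacle. Your first route --- that a color class with $\omega(n)$ edges contains a monochromatic $K_{s,t}$ by K\"{o}vari--S\'{o}s--Tur\'{a}n --- fails outright: that theorem needs $\Omega(n^{2-1/s})$ edges, far more than $\omega(n)$, so no monochromatic $K_{s,t}$ is forced. Your fallback (the bipartite Color Energy Method, ``ruling out spread-out colorings of $K_{s,t}$'') is not substantiated and is not what the paper does. The missing idea is to forbid a monochromatic copy of a \emph{sparse} subgraph of $K_{s,t}$ whose Tur\'{a}n number is linear: the paper takes $T$ to be the double star obtained from disjoint copies of $K_{1,s-1}$ and $K_{1,t-1}$ by joining their centers. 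This tree has $s+t-1$ edges and sits inside $K_{s,t}$, so a monochromatic $T$ extends to a $K_{s,t}$ with at least $s+t-2$ color repetitions, i.e.\ at most $st-s-t+2$ colors --- a contradiction. Since $T$ is a tree, $\mathrm{ex}(n,n,T)\leq (s+t-2)n$ by Sidorenko's result, so every color class has $O(n)$ edges and $|C(G)|\geq n/(s+t-2)$. Note also that the $O(n)$ upper bound in (iii) requires no construction: it is the same instantiation of the general bound (\ref{eq:UnbalanGener}) that you correctly invoke for the $O(n^{1-1/(s+t-1)})$ statement, now with $q=st-s-t+3$ so that the exponent equals $1$.

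Parts (i) and (ii) follow essentially the paper's route: a local analysis of the color classes at one vertex for the lower bounds, and a block-circulant coloring ($c(A_i,B_j)\equiv i+j-1 \pmod{k}$ on a partition of $A$ and $B$ into blocks) for the upper bounds, which you gesture at but do not write down. Your counting in (i) --- all but the $q-1$ largest classes have size at most $\lf (t-1)/(q-1)\rf$, hence $n\leq (t-1)+(c-q+1)\lf (t-1)/(q-1)\rf$ --- is valid and implies the stated bound (the paper instead double-counts $\sum_{|I|=q-1}\sum_{i\in I}x_i$; your version is closer in spirit to the refinement in Remark~\ref{re:K1t}). In (ii), however, your bookkeeping is off: the correct conclusion is that at most $t-q$ classes at a vertex have size at least $2$ and that their total excess over singletons is at most $t-q$, giving $c\geq n-(t-q)=n-t+q$; as written, your inequality $n\leq (t-q+1)+(c-(t-q+1))$ collapses to $n\leq c$, and the charitable reading yields only $c\geq n-t+q-1$. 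Since (ii) is an exact result, this off-by-one must be repaired (the paper does so by exhibiting $t-q+1$ vertices of $B$ whose edges to $a$ each have a same-colored partner outside the set, and using $2(t-q+1)\leq t$).
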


Next, we consider the threshold for quadratic $r(K_{n,n}, K_{s,t}, q)$. The upper bound (\ref{eq:UnbalanGener}) implies that $r\left(K_{n,n}, K_{s,t}, st-\big\lfloor\frac{s+t}{2}\big\rfloor+1\right)=O\left(n^{(s+t-2)/\lfloor (s+t)/2\rfloor}\right)$, which is subquadratic in $n$. Combining this observation with the lower bound of Corollary~\ref{co:Kss+1} (see Section~\ref{sec:PfC_p}), we obtain
$cn^{2-2/\lfloor s/2\rfloor}\leq r\left(K_{n,n}, K_{s,t}, st-\big\lfloor\frac{s+t}{2}\big\rfloor+1\right)$ $\leq Cn^{2-1/s}$ for $t=s+1$, where $c$ and $C$ are two constants. We can also obtain the following lower bound for the case $t\geq s+2$.

\begin{theorem}\label{th:UnbalanceQu3+} For integers $s\geq 3$ and $t\geq s+2$, if $(s,t)\notin \{(3,5), (3,7)\}$, then $r\big(K_{n,n}, K_{s,t}, st-\big\lfloor\frac{s+t}{2}\big\rfloor+1\big)=\Omega(n^{4/3})$.
\end{theorem}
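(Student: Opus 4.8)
The plan is to establish the lower bound by a counting argument in the spirit of the Color Energy Method, translating the requirement that $K_{n,n}$ admits a $(K_{s,t}, st-\lfloor (s+t)/2\rfloor+1)$-coloring into a statement that ``almost rainbow'' copies of $K_{s,t}$ are forbidden, and then using a bipartite Kővári–Sós–Turán / Corrádi-type estimate to bound the number of colors from below. Concretely, an edge-coloring of $K_{n,n}$ with $r$ colors fails to be such a coloring precisely when there is a copy of $K_{s,t}$ using at most $st-\lfloor (s+t)/2\rfloor$ colors, i.e.\ a copy with at least $\lfloor (s+t)/2\rfloor$ ``repeats'' among its $st$ edges. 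Since $\lfloor (s+t)/2\rfloor \ge \lfloor (2s+2)/2\rfloor = s+1$ when $t\ge s+2$, any valid coloring forbids copies of $K_{s,t}$ in which $s+1$ edges can be ``saved'' by color coincidences. First I would set up the natural auxiliary structure: for each color class, record the bipartite graph it forms, and build the ``color-energy'' counting the number of suitably configured tuples of vertices together with monochromatic edge-pairs inside a potential $K_{s,t}$.

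The key steps, in order, are: (1) fix a hypothetical $(K_{s,t}, st-\lfloor (s+t)/2\rfloor+1)$-coloring of $K_{n,n}$ with $r$ colors and suppose for contradiction $r = o(n^{4/3})$; (2) lower-bound the number of copies of $K_{s,t}$ in $K_{n,n}$ that are forced to contain many monochromatic edge-pairs, by a convexity (Jensen) argument over vertex degrees as in Kővári–Sós–Turán — here the hypothesis $t\ge s+2$, $s\ge 3$ guarantees there are enough copies of $K_{s,t}$ to work with; (3) apply the bipartite Color Energy Method (the extension announced in Section~\ref{subsec:colorenergy}) together with the generalization of Corrádi's lemma to show that if $r$ is too small, then some copy of $K_{s,t}$ must accumulate $\lfloor (s+t)/2\rfloor$ color repetitions, contradicting the coloring property; (4) optimize the exponent, which is where the $n^{4/3}$ barrier arises: the color-energy of a coloring with $r$ colors on $n+n$ vertices is bounded below by roughly $n^4/r$, and the number of ``cherries'' or edge-pairs that can be monochromatic without creating a forbidden copy is bounded above by something like $n^{8/3}$, so $n^4/r \lesssim n^{8/3}$ forces $r \gtrsim n^{4/3}$. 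Throughout I would need the condition $(s,t)\notin\{(3,5),(3,7)\}$ to rule out the sporadic cases where the relevant Turán-type exponent degenerates and the clean $n^{4/3}$ bound is not extractable from this method.

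I expect the main obstacle to be step (3): making the bipartite color-energy double-counting tight enough that a sub-$n^{4/3}$ palette is genuinely ruled out. The subtlety is that ``at least $\lfloor (s+t)/2\rfloor$ colors are repeated'' is a constraint on the *multiset* of colors on the $st$ edges of one copy, not merely on a pair of edges, so one cannot directly feed it into a two-variable energy; one must choose the right intermediate configuration (most naturally, a pair of vertices on the $t$-side sharing two neighbors on the $s$-side connected by monochromatic edges, or a small ``book''), count its monochromatic instances via Cauchy–Schwarz against $r$, and then argue that avoiding $\lfloor(s+t)/2\rfloor$ repeats in every $K_{s,t}$ caps the number of such monochromatic instances. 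Verifying that the resulting extremal-graph-theory bound (a Zarank-type bound for the configuration, with the two exceptional pairs removed) indeed yields exponent $4/3$ — and not something weaker — is the delicate part; the exclusion of $(3,5)$ and $(3,7)$ strongly suggests the argument routes through known exact Turán numbers (e.g.\ for $C_6$ or $\theta$-graphs), and reconciling those thresholds with the general $s,t$ statement is where the bookkeeping will be heaviest.
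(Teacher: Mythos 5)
Your proposal is a plan rather than a proof, and the plan as stated has a genuine gap: the mechanism that actually produces the exponent $4/3$ is never identified. The paper's argument does not run through the color energy graph at all (the authors explicitly prove this theorem using only Corr\'{a}di's Lemma and its generalization). The engine is a dichotomy on monochromatic matchings: since $t\ge s+2$ forces every color class $G_i$ to have maximum degree at most $\lfloor(s+t)/2\rfloor$, either no $G_i$ contains a matching of size $m=\lceil n^{2/3}\rceil$ --- in which case K\"{o}nig's theorem gives $|E(G_i)|=O(n^{2/3})$ for every $i$ and hence $|C(G)|=\Omega(n^{4/3})$ immediately --- or some color admits a monochromatic matching $M=\{a_1b_1,\ldots,a_mb_m\}$ of size $m$. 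That matching is then used as a reservoir of $s-1$ (or more) free repetitions sitting inside any $K_{s,t}$ built on its endpoints, so that any \emph{other} color repeating more than $p=\lfloor(s+t)/2\rfloor-s+1$ times on $A'\cup B'$ (when $t\le 3s-3$), or appearing in the triple intersection $C_{j_1}\cap C_{j_2}\cap C_{j_3}$ of cross-edge color sets at least $\lceil p/2\rceil$ times (when $t\ge 3s-2$), completes a $K_{s,t}$ with $\lfloor(s+t)/2\rfloor$ repetitions. The count is then finished by a direct pigeonhole in the first subcase and by the generalized Corr\'{a}di lemma with $r=3$ in the second; the choice $m=n^{2/3}$ is exactly what balances the two sides of the dichotomy at $n^{4/3}$. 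None of this appears in your sketch: you never say where the $n^{2/3}$ threshold comes from, and your step (4) heuristic ($n^4/r\lesssim n^{8/3}$) would require exhibiting a forbidden configuration $H$ with $\mathrm{ex}(N,N,H)=O(N^{4/3})$ such that the pruned energy graph $\widetilde{G}^2$ is provably $H$-free --- you propose no such $H$ and do not verify that the ``at least $\lfloor(s+t)/2\rfloor$ repetitions in some $K_{s,t}$'' conclusion can be extracted from it. This is precisely the step you yourself flag as the obstacle, so the proposal does not close the argument.

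A secondary but concrete error: your explanation of the exclusions $(3,5)$ and $(3,7)$ is wrong. They have nothing to do with degenerate Tur\'{a}n or theta-graph exponents. The pair $(3,5)$ is excluded because Case~1 of the paper's proof needs $p+1\le s$ so that $p+1$ repeated edges of a second color fit on $s$ vertices of one side of a $K_{s,t}$, and this fails exactly at $(3,5)$; the pair $(3,7)$ is excluded because the elementary inequality $\tfrac{3}{2}\bigl(\lfloor\tfrac{s+t}{2}\rfloor-s+2\bigr)+s-1\le t$ (Lemma~\ref{le:UnbalanceQu3+}), needed to fit three pairwise-intersecting color sets plus the matching inside the $t$-side, fails there. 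Both exclusions are bookkeeping constraints on packing repeated edges into a single $K_{s,t}$, not extremal-number phenomena.
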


However, it seems difficult to determine whether $q=st-\big\lfloor\frac{s+t}{2}\big\rfloor+2$ is the threshold for quadratic $r(K_{n,n}, K_{s,t}, q)$. This is true for the case $s=t$; see Table~\ref{tab:rnn}. For the unbalanced case, we can only confirm this for $K_{s,s+1}$, $K_{2,t}$ and $K_{3,t}$ ($t$ is even); see Corollary~\ref{co:K2t3teven} below. Corollary~\ref{co:K2t3teven} is an immediate consequence of the following theorem.

\begin{theorem}\label{th:UnbalanceQu1} Let $t\geq s\geq 2$ be two integers. Then the following statements hold.
\begin{itemize}
\item[{\rm (i)}] $r(K_{n,n}, K_{s,t}, st-s+2)=\Theta(n^2)$.

\item[{\rm (ii)}] If $2\leq s\leq 3$, then $r\big(K_{n,n}, K_{s,t}, st-\big\lfloor\frac{t}{2}\big\rfloor+1\big)=\Theta(n^2)$.

\item[{\rm (iii)}] If $s\geq 4$ and at least one of $s$ and $t$ is even, then $r\big(K_{n,n}, K_{s,t}, st-\big\lfloor\frac{s+t}{2}\big\rfloor+3\big)= \Theta(n^{2})$.

\item[{\rm (iv)}] If $s\geq 5$ and both $s$ and $t$ are odd, then $r\big(K_{n,n}, K_{s,t}, st-\big\lfloor\frac{s+t}{2}\big\rfloor+4\big)= \Theta(n^{2})$.
\end{itemize}
\end{theorem}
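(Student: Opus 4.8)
The plan is to prove all four parts by establishing, in each case, a matching quadratic lower bound $r\geq cn^2$; the quadratic upper bound $r\leq Cn^2$ is immediate, since an edge-coloring of $K_{n,n}$ in which every edge gets its own color uses $n^2$ colors and trivially is an $(H,q)$-coloring for every $q\le e(H)$, so only the lower bound requires work. For the lower bound I would use the bipartite Color Energy Method developed in Section~\ref{subsec:colorenergy}. The overall scheme is the standard one: suppose $K_{n,n}$ has a $(K_{s,t},q)$-coloring with few colors, say $m=o(n^2)$ colors; derive that some color class is large, or more precisely that the ``color energy'' (the number of quadruples, or more general configurations, that are monochromatic in the appropriate sense) is large by convexity/Cauchy–Schwarz; and then show that a large color energy forces a copy of $K_{s,t}$ using fewer than $q$ colors, contradicting the hypothesis. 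The value of $q$ in each part is chosen precisely as the smallest threshold for which the energy increment produced by a low-color coloring is enough to embed a ``color-degenerate'' $K_{s,t}$.

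Concretely, the key steps are as follows. First I would set up the bipartite color energy and record, via Cauchy–Schwarz over the $m$ color classes, the lower bound on the number of monochromatic ``cherries'' or monochromatic $C_4$-type configurations that any $m$-coloring must contain when $m=o(n^2)$. Second, I would invoke the generalization of Corr\'adi's Lemma mentioned in the abstract (stated in Section~\ref{sec:pre}) to convert an abundance of such monochromatic configurations into a large bipartite subgraph that is ``color-poor'' in a controlled way — i.e., a sub-$K_{a,b}$ or a dense bipartite graph in which many edges are forced to repeat colors along structured patterns. Third, inside this color-poor structure I would locate a copy of $K_{s,t}$ whose edges span at most $q-1$ colors; the counting in parts (i)–(iv) differs only in how many coincidences among the $st$ edges one can guarantee, which is governed by the parity of $s$ and $t$ (whether one can pair up the $s$ vertices on one side, the $t$ on the other, or is left with an unpaired vertex), explaining the slightly different thresholds $st-s+2$, $st-\lfloor t/2\rfloor+1$, $st-\lfloor(s+t)/2\rfloor+3$, and $st-\lfloor(s+t)/2\rfloor+4$. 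Part (i) is the crudest and should follow from a single pair of identically-colored edges; parts (ii)–(iv) require squeezing out one or two extra repeated colors using the finer energy refinements.

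For the bookkeeping it is convenient to phrase the embedding as: a $K_{s,t}$ in which two edges sharing an endpoint receive the same color saves one color, a perfect pairing of the $t$ neighbors of a fixed vertex into $\lfloor t/2\rfloor$ monochromatic pairs saves $\lceil t/2\rceil$ — wait, rather $\lfloor t/2\rfloor$ — colors, and so on; matching these savings against $st$ gives exactly the stated $q$. I would treat the even cases (at least one of $s,t$ even, part (iii)) first since the pairing is clean, then handle the all-odd case (part (iv)) by absorbing the leftover vertex, which costs one more color and hence raises the threshold by one. Parts (i) and (ii) for small $s\in\{2,3\}$ are special because there the relevant forbidden configuration reduces to a $C_4$ or a $\Theta$-graph, where the Color Energy Method is sharpest and the classical Kővári–Sós–Turán / Corr\'adi bounds apply directly.

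The main obstacle I expect is the third step: reliably finding the $K_{s,t}$ with the prescribed color pattern inside the color-poor structure produced in step two. The energy method naturally yields many monochromatic $C_4$'s or cherries, but organizing several of them simultaneously so that they assemble into one $K_{s,t}$ with the correct number (and adjacency pattern) of repeated colors — without the repeats ``collapsing'' onto each other and thereby saving fewer colors than counted — is delicate, and is presumably where the parity hypotheses and the exclusions ``$s\ge 4$'', ``$s\ge 5$'' come from. I would handle this by a two-round embedding: first reserve, greedily, a large balanced bipartite ``core'' in which \emph{every} cherry is monochromatic of a controlled multiplicity (this is where Corr\'adi's generalization does the heavy lifting), and then embed $K_{s,t}$ greedily inside the core, choosing the vertices one side at a time so that the monochromatic cherries line up into the desired pattern; a short averaging argument shows the core is large enough (polynomial in $n$, in fact $\Omega(n)$ on each side) to complete such a greedy embedding, which yields the contradiction and hence $r=\Omega(n^2)$.
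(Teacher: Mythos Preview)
Your proposal takes a substantially heavier route than the paper, and as written it is only a plan, not a proof: the crucial ``third step'' of assembling several monochromatic cherries into a single $K_{s,t}$ with a prescribed color pattern is never carried out, and the ``two-round greedy embedding inside a core'' is too vague to assess. In particular, you have not explained how to prevent the repeated colors from collapsing onto one another, which is exactly the difficulty you flag.

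More to the point, the paper does \emph{not} use the Color Energy Method for this theorem at all. Part~(i) is a two-line counting argument: if every $K_{s,t}$ has at most $s-1$ color repetitions, then no color class contains a $K_{1,s}$ (such a star already gives $s-1$ repetitions) nor a matching $sK_2$; hence by K\"onig's theorem each color class has a vertex cover of size $\le s-1$ and maximum degree $\le s-1$, so at most $(s-1)^2$ edges, giving $|C(G)|\ge n^2/(s-1)^2$.

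For parts~(ii)--(iv), the paper applies Corr\'adi's Lemma not to monochromatic configurations but directly to the \emph{color palettes} $X_j=\{c(a_jb):b\in B\}$ of the vertices $a_j\in A$. Each $X_j$ has size at least $n/(r+1)$ (otherwise a large monochromatic star at $a_j$ yields $r+1$ repetitions in some $K_{s,t}$), and one shows $|X_{j_1}\cap X_{j_2}|\le\lfloor t/2\rfloor-1$ for all $j_1\ne j_2$; Corr\'adi then gives $|C(G)|\ge|\bigcup_j X_j|=\Omega(n^2)$. For $s\in\{2,3\}$ the intersection bound is immediate: $\lfloor t/2\rfloor$ common colors at two $A$-vertices already give $r+1$ repetitions in a $K_{2,t}\subseteq K_{s,t}$. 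For $s\ge4$ there is a short bootstrap: if two $X$-palettes share $\ge\lfloor(t-2)/2\rfloor$ colors, that alone pins down $\lfloor(t-2)/2\rfloor$ repetitions on two $A$-vertices and at most $t-2$ $B$-vertices; one then runs the identical argument on the $B$-side palettes $Y_\ell$ restricted to the remaining $B$-vertices, and any intersection $|Y_{\ell_1}\cap Y_{\ell_2}|\ge\lfloor(s-2)/2\rfloor$ would combine with the first block to give $\lfloor(t-2)/2\rfloor+\lfloor(s-2)/2\rfloor=r+1$ repetitions in a $K_{s,t}$. So the $Y$-intersections are small, Corr\'adi on the $Y_\ell$'s yields $\Omega(n^2)$ colors, contradicting $|C(G)|=o(n^2)$. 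This two-sided palette bootstrap is precisely where the parity of $s$ and $t$ enters and produces the different thresholds in (iii) and (iv); no embedding of a patterned $K_{s,t}$ is ever needed.

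So the real gap is that you reach for color energy and Tur\'an-type embeddings when a direct set-intersection argument on vertex palettes suffices, and the embedding your scheme would genuinely require is never supplied.
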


\begin{corollary}\label{co:K2t3teven} The following statements hold.
\begin{itemize}
\item[{\rm (i)}] For any integer $s\geq 2$, we have $r\big(K_{n,n}, K_{s,s+1}, s(s+1)-\big\lfloor\frac{2s+1}{2}\big\rfloor+2\big)=\Theta(n^2)$.

\item[{\rm (ii)}] For any integer $t\geq 2$, we have $r\big(K_{n,n}, K_{2,t}, 2t-\big\lfloor\frac{2+t}{2}\big\rfloor+2\big)=\Theta(n^2)$.

\item[{\rm (iii)}] For any even integer $t\geq 4$, we have $r\big(K_{n,n}, K_{3,t}, 3t-\big\lfloor\frac{3+t}{2}\big\rfloor+2\big)=\Theta(n^2)$.
\end{itemize}
\end{corollary}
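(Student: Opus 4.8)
The plan is to obtain all three statements as direct specializations of Theorem~\ref{th:UnbalanceQu1}, so that the only work is to substitute the prescribed values of $s$ and $t$ and to simplify the floor term in the color count $q$; no new extremal or counting argument is required.

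First, for part~(i) I would take $t=s+1$ in Theorem~\ref{th:UnbalanceQu1}(i). Since $\big\lfloor\frac{2s+1}{2}\big\rfloor=s$, the color count in the corollary becomes
\[
s(s+1)-\Big\lfloor\tfrac{2s+1}{2}\Big\rfloor+2 = s(s+1)-s+2 = st-s+2,
\]
which is precisely the value appearing in Theorem~\ref{th:UnbalanceQu1}(i); as $t=s+1\geq s\geq 2$, that theorem applies and yields $\Theta(n^2)$.

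Next, for part~(ii) I would set $s=2$ in Theorem~\ref{th:UnbalanceQu1}(ii) (legitimate since $s=2$ satisfies $2\leq s\leq 3$ and $t\geq 2=s$). Using $\big\lfloor\frac{2+t}{2}\big\rfloor=1+\big\lfloor\frac{t}{2}\big\rfloor$,
\[
2t-\Big\lfloor\tfrac{2+t}{2}\Big\rfloor+2 = 2t-\Big\lfloor\tfrac{t}{2}\Big\rfloor+1 = st-\Big\lfloor\tfrac{t}{2}\Big\rfloor+1,
\]
which matches Theorem~\ref{th:UnbalanceQu1}(ii) with $s=2$. Similarly, for part~(iii) I would put $s=3$ with $t\geq 4$ even; then $3+t$ is odd, so $\big\lfloor\frac{3+t}{2}\big\rfloor=\frac{t+2}{2}=1+\frac{t}{2}=1+\big\lfloor\frac{t}{2}\big\rfloor$, and
\[
3t-\Big\lfloor\tfrac{3+t}{2}\Big\rfloor+2 = 3t-\Big\lfloor\tfrac{t}{2}\Big\rfloor+1 = st-\Big\lfloor\tfrac{t}{2}\Big\rfloor+1,
\]
which is again the quantity in Theorem~\ref{th:UnbalanceQu1}(ii), now with $s=3$ and $t\geq 4\geq s$. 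Invoking the theorem finishes each case.

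There is essentially no obstacle beyond careful bookkeeping: the only points to watch are the parity of $s+t$ (so that the floors collapse as above) and the domain restriction $t\geq s\geq 2$ of Theorem~\ref{th:UnbalanceQu1}, which one should confirm holds even for the smallest instances $K_{2,2}$, $K_{2,3}$ and $K_{3,4}$ (in some of which $q$ degenerates to $st$ and the bound is the trivial $n^2$). For completeness I would also recall that the matching subquadratic bound at $q-1=st-\big\lfloor\frac{s+t}{2}\big\rfloor+1$ — which is what makes $st-\big\lfloor\frac{s+t}{2}\big\rfloor+2$ the \emph{threshold} for quadratic growth in these three families — comes from the general upper bound~(\ref{eq:UnbalanGener}), as observed just before Theorem~\ref{th:UnbalanceQu3+}.
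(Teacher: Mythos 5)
Your proposal is correct and coincides with the paper's own treatment: the paper explicitly states that Corollary~\ref{co:K2t3teven} is an immediate consequence of Theorem~\ref{th:UnbalanceQu1}, and the substitutions you carry out ($t=s+1$ into part~(i), and $s=2$ or $s=3$ with the floor simplifications into part~(ii)) are exactly the intended derivation. The arithmetic with the floors and the verification of the domain condition $t\geq s\geq 2$ are all accurate, so nothing further is needed.
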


Theorem~\ref{th:UnbalanceQu1} implies that the threshold for quadratic $r(K_{n,n}, K_{s,t}, q)$ is between $st-\big\lfloor\frac{s+t}{2}\big\rfloor+2$ and $st-\big\lfloor\frac{s+t}{2}\big\rfloor+4$ when $t\geq s\geq 2$. For general integers $s, t$ and $q=st-\big\lfloor\frac{s+t}{2}\big\rfloor+2$, we can prove the following lower bound result.

\begin{theorem}\label{th:UnbalanceQu2} For integers $t\geq s\geq 3$, we have $r\big(K_{n,n}, K_{s,t}, st-\big\lfloor\frac{s+t}{2}\big\rfloor+2\big)=\Omega(n^{3/2})$.
\end{theorem}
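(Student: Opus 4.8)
I would argue by contradiction: suppose $\chi$ is an edge-colouring of $K_{n,n}$ (parts $X,Y$) with $m$ colours and no copy of $K_{s,t}$ using fewer than $q$ colours, and write $k:=\lfloor\frac{s+t}{2}\rfloor-1$, so $q-1=st-k$. A copy of $K_{s,t}$ with at most $st-k$ colours is exactly one whose $st$ edges carry $\ge k$ \emph{repetitions} (inside it, edges minus colours $\ge k$); producing such a copy contradicts the hypothesis and yields $m=\Omega(n^{3/2})$.

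\textbf{A reduction.} For an $s$-set $W=\{w_1,\dots,w_s\}$ contained in $X$ or in $Y$, encode $\chi$ between $W$ and the opposite part by the $s$-uniform multi-hypergraph $H_W$ on the colour set, whose $(multi)$edges are $\{\chi(w_1z),\dots,\chi(w_sz)\}$ for $z$ in the opposite part. Since a connected sub-hypergraph with $\ell$ $(multi)$edges spans $\le s+(\ell-1)(s-1)$ colours, if some $r\le t-k$ components of $H_W$ jointly contain $\ge t$ $(multi)$edges, then picking $t$ of them (a connected piece inside each component used) gives $t$ vertices $Z$ in the opposite part with $\le (s-1)t+r\le st-k$ colours on $W\times Z$, i.e.\ a copy of $K_{s,t}$ with $\ge k$ repetitions; in particular a single component with $\ge t$ edges already does. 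So it suffices to show: \emph{for $m=o(n^{3/2})$ there is an $s$-set $W$ in $X$ or $Y$ whose $t-k$ largest components of $H_W$ carry $\ge t$ $(multi)$edges in total.}

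\textbf{Main steps.} The bipartite Color Energy bound gives $\sum_c m_c^2\ge n^4/m$, which is $\omega(n^{5/2})$ since $m=o(n^{3/2})$, so $K_{n,n}$ has $\omega(n^{5/2})$ equally-coloured edge-pairs. If a positive fraction of these pass through a common vertex we extract a large monochromatic star (or $K_{2,\ell}$) lying inside some $K_{s,t}$ and are done; otherwise we are in the ``generic'' regime and feed the colour-neighbourhoods $N_c(x)=\{y:\chi(xy)=c\}$ (equivalently, the colour classes) into the generalized Corr\'{a}di lemma. The assumption that no $K_{s,t}$ is this colour-poor prevents these sets from being simultaneously large ($\gg\sqrt n$) and pairwise almost disjoint, and the precise size--intersection--ground-set trade-off in Corr\'{a}di's inequality is exactly what converts the energy surplus into the exponent $3/2$; the outcome is an $s$-set $W$ for which $H_W$ is concentrated in the sense of the reduction. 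A handful of small $(s,t)$ would be treated by hand.

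\textbf{Main obstacle.} The delicate regime is $t\gg s$, where $k\approx\frac{s+t}{2}>s$, so the $k$ repetitions cannot be spread one per row over the $\le s$ rows of the $K_{s,t}$ but must cluster on the $t$-side; this is precisely why a single monochromatic $K_{a,b}$ cannot suffice and one must work with a union of $\Theta(t)$ small colour-poor blocks (the $r$-components version above). Making that quantitative forces the Corr\'{a}di input to deliver many near-disjoint large colour-neighbourhoods at once and to control how their components spread over the $m$ colours; this ``many blocks versus few colours'' balancing is the heart of the argument, and also the reason the bound stops at $n^{3/2}$ rather than the quadratic bound available in the balanced (and a few special) cases.
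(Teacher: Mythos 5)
Your opening reduction (the $s$-uniform colour multi-hypergraph $H_W$ and the count $r s+(t-r)(s-1)=(s-1)t+r\le st-k$ for $r\le t-k$ components carrying $t$ multiedges) is correct and is a genuinely different framing from the paper's, but the proof stops exactly where the work begins: nothing in your ``Main steps'' actually produces the required $s$-set $W$. The energy identity $\sum_c m_c^2\ge n^4/m$ and the exclusion of large monochromatic stars are fine, but the Corr\'adi step as written does not parse. Once monochromatic $K_{1,k+1}$ is excluded, the colour-neighbourhoods $N_c(x)$ all have size at most $k=O(1)$, so they cannot be the ``large ($\gg\sqrt n$), pairwise almost disjoint'' sets you feed into Corr\'adi; and the logic is inverted — Corr\'adi \emph{needs} large sets with small pairwise intersections to produce a large union (many colours), whereas you assert the colouring hypothesis ``prevents'' that configuration. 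Moreover, Corr\'adi's conclusion is a lower bound on a union of sets; it does not output a structured $s$-set $W$ with concentrated $H_W$, and for $s\ge 4$ your reduction genuinely needs components of size $\ge 3$ (since $2(t-k)<t$ there), so even locating $\Theta(t)$ disjoint pairwise colour-coincidences would not suffice. You correctly identify this ``many blocks versus few colours'' balancing as the heart of the argument, but you do not carry it out, so the proposal is a program rather than a proof.

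The idea you are missing is the one the paper uses to make a simple pairwise (second-moment) argument suffice: a large \emph{monochromatic matching}. Set $p=\lfloor(s+t)/2\rfloor-1$ and $m=\lceil n^{1/2}\rceil$. If no colour class contains a matching of size $m$, then by K\H{o}nig each class has a vertex cover of size at most $m-1$ and (having maximum degree at most $p$) at most $pn^{1/2}$ edges, giving $|C(G)|=\Omega(n^{3/2})$ at once. Otherwise take a monochromatic matching $a_1b_1,\dots,a_mb_m$; any $s$ of its edges already supply $s-1$ repetitions inside a prospective $K_{s,t}$, so only $p-s+1\le (t-s)/2$ further coincidences are needed, and these can be harvested from just \emph{two} matched vertices: if the palettes $X'_{j_1},X'_{j_2}$ of $a_{j_1},a_{j_2}$ towards $\{b_{m+1},\dots,b_n\}$ share $p-s+1$ colours, the union $\{a_1,\dots,a_s\}\cup\{b_1,\dots,b_s\}\cup B_1\cup B_2$ fits inside a $K_{s,t}$ (since $2(p-s+1)+s\le t$) and carries $p$ repetitions. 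Hence the $m$ palettes each have size at least $(n-m)/p$ and pairwise intersections at most $p-s$, and the \emph{ordinary} Corr\'adi lemma applied to these $\Theta(\sqrt n)$ colour sets gives $|C(G)|=\Omega(n^{3/2})$. In short: your reduction is sound but unexploited, and the matching trick is what converts ``$k\approx(s+t)/2$ repetitions must cluster on the $t$-side'' into a tractable two-set intersection condition.
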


We will prove Theorems~\ref{th:UnbalanceQu3+}, \ref{th:UnbalanceQu1} and \ref{th:UnbalanceQu2} using Corr\'{a}di's Lemma and its generalization (see Lemmas \ref{le:Corradi} and \ref{le:gCorradi}). We next provide some results we proved by combining Corr\'{a}di's Lemma and the Color Energy Method. The Color Energy Method is a recently developed technique for determining lower bounds on $f(n, p, q)$. This method was first introduced by Pohoata and Sheffer \cite{PoSh}, and further developed by Fish, Pohoata and Sheffer \cite{FiPS} and Balogh, English, Heath and Krueger \cite{BEHK}. In Section~\ref{subsec:colorenergy}, we will introduce a generalization of this method that is suited for studying $r(K_{n,n}, K_{s,t}, q)$. Here we continue with the overview of our main results.

Axenovich, F\"{u}redi and Mubayi \cite{AxFM} proved that $n^{4/3}-2n^{2/3}+1\leq r\left(K_{n,n}, K_{p,p}, p^2-p+1\right)\leq c'_pn^{2-2/p}$ for $p\geq 6$ and $n>p^{3/2}$. We improve the lower bound as follows.

\begin{theorem}\label{th:C_p} For any integer $p\geq 2$, we have $r\left(K_{n,n}, K_{p,p}, p^2-p+1\right)=\Omega\left(n^{2-\frac{2}{\lfloor p/2\rfloor}}\right)$.
\end{theorem}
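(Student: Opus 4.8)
\medskip
\noindent\emph{Plan of proof.}\ Write $k=\lfloor p/2\rfloor$, so that $p\in\{2k,2k+1\}$, and assume for contradiction that $\chi$ is a $(K_{p,p},p^{2}-p+1)$-coloring of $K_{n,n}$, with parts $X$ and $Y$, using $m$ colors, where $m=o\!\left(n^{\,2-2/k}\right)$. The governing observation is that a copy of $K_{p,p}$ fails to be $\chi$-colorful precisely when its \emph{excess}—the number of its edges minus the number of colors appearing on it—is at least $p$, and that excess is superadditive over edge-disjoint unions. Hence it suffices to locate, somewhere in $\chi$, a copy of $K_{p,p}$ assembled from enough mutually edge-disjoint ``color-repeating'' pieces. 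A convenient target is a collection of $k$ pairwise vertex-disjoint pairs $\{x_{i},x_{i}'\}\subseteq X$ together with two vertices $y,y'\in Y$ such that $\chi(x_{i}y)=\chi(x_{i}'y)$ and $\chi(x_{i}y')=\chi(x_{i}'y')$ for every $i$: then the $K_{p,p}$ spanned by any $p$ vertices of $X$ containing $x_{1},x_{1}',\dots,x_{k},x_{k}'$ and any $p$ vertices of $Y$ containing $y,y'$ contains $k$ edge-disjoint $C_{4}$'s each carrying at most $2$ colors, so its excess is at least $2k\ge p-1$; when $p=2k+1$ one extra coincidence (one of the pairs agreeing on a third common vertex of $Y$, which is exactly the surplus the energy count will supply) pushes the excess up to $p$. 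So the whole task reduces to producing such a configuration once $m$ drops below the claimed threshold.

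\medskip
To produce it I would run the bipartite Color Energy Method developed in Section~\ref{subsec:colorenergy}—the bipartite analogue of the (higher-order) color energy of Pohoata--Sheffer~\cite{PoSh}, Fish--Pohoata--Sheffer~\cite{FiPS} and Balogh--English--Heath--Krueger~\cite{BEHK}—taken at order $k$. For $x\ne x'$ put $A(x,x')=\{z\in Y:\chi(xz)=\chi(x'z)\}$, and for $\{y,y'\}\subseteq Y$ set $\mathcal P(y,y')=\{\{x,x'\}\in\binom{X}{2}:\{y,y'\}\subseteq A(x,x')\}$. A Cauchy--Schwarz/Jensen estimate applied to the color-degree sequences of the vertices of $X$ (there being only $m$ colors) gives, on the one hand, a lower bound on the order-$k$ energy $\sum_{\{y,y'\}}\binom{|\mathcal P(y,y')|}{k}$ that is still large exactly in the regime $m=o(n^{\,2-2/k})$; here it is essential to work at order $k$, since the cruder order-$2$ energy, together with the elementary bound $|A(x,x')|\le p-1$ forced by colorfulness, only yields the linear lower bound $\Omega(n)$. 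On the other hand, the configuration described above is forbidden: for each $\{y,y'\}$ the family $\mathcal P(y,y')$ can contain no $k$ pairwise vertex-disjoint members, because any such $k$ members span a non-colorful $K_{p,p}$. Feeding this restriction into the generalization of Corr\'adi's Lemma (Lemma~\ref{le:gCorradi})—which bounds how large a family of sets with small $k$-wise ``disjoint'' intersections can be—produces an upper bound on the same order-$k$ energy that is incompatible, when $m=o(n^{\,2-2/k})$, with the lower bound. This contradiction gives $r(K_{n,n},K_{p,p},p^{2}-p+1)=\Omega\!\left(n^{\,2-2/\lfloor p/2\rfloor}\right)$. (Alternatively, the bound is the $s=t=p$ case of the general unbalanced lower-bound statement underlying Corollary~\ref{co:Kss+1}.)

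\medskip
The step I expect to be the main obstacle is the calibration in the middle paragraph: fixing the color energy at exactly order $k=\lfloor p/2\rfloor$, and then organizing the application of the Corr\'adi-type lemma so that the passage from ``abundantly many local monochromatic agreements'' to ``one global non-colorful $K_{p,p}$'' is lossless up to constant factors, in order that the exponent $2-2/k$ coming from the convexity bound and the exponent coming from the Corr\'adi bound genuinely coincide. A secondary technical point is the parity of $p$: for odd $p$ one must extract one unit of excess beyond the $2k$ provided by the $k$ disjoint pairs, and this is where the surplus coincidences guaranteed by the energy count, together with the bound $|A(x,x')|\le p-1$, are used.
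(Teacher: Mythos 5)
Your reduction to a concrete forbidden configuration is sound as far as it goes: $k$ vertex-disjoint pairs $\{x_i,x_i'\}$ agreeing at two common apexes $y,y'$ do yield $k$ edge-disjoint $C_4$'s and hence a $K_{p,p}$ with at least $2k\ge p-1$ color repetitions, and the bound $|A(x,x')|\le p-1$ is correct. The genuine gap is in the counting: the scheme you describe cannot produce the exponent $2-2/k$, however the ``order'' of the energy is calibrated. The only quantitative information your forbidden configuration supplies is that, for each pair $\{y,y'\}$, the graph $\mathcal{P}(y,y')$ on $X$ has no matching of size $k$ and hence at most $(k-1)n$ edges; summing, $\sum_{\{y,y'\}}|\mathcal{P}(y,y')|=\sum_{\{x,x'\}}\binom{|A(x,x')|}{2}=O(n^3)$, and in fact the colorfulness bound $|A(x,x')|\le p-1$ already forces this sum to be $O(n^2)$, so the matching restriction adds nothing detectable by these counts. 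Against this, Jensen applied to the color degrees gives $\sum_{\{x,x'\}}\binom{|A(x,x')|}{2}\gtrsim n^4/m^2$, and comparing yields only $m=\Omega(n)$. Passing to $\sum_{\{y,y'\}}\binom{|\mathcal{P}(y,y')|}{k}$ does not help: the Jensen lower bound and the no-$k$-matching upper bound are both raised to essentially the $k$th power, so the resulting bound on $m$ is unchanged. Moreover, Lemma~\ref{le:gCorradi} is not the right tool here: it lower-bounds unions of sets with small $r$-wise intersections, which is not the form of the constraint ``$\mathcal{P}(y,y')$ contains no $k$ pairwise disjoint members.'' (Your parenthetical fallback is also circular: Corollary~\ref{co:Kss+1} is deduced from Theorem~\ref{th:C_p}, not the other way around.)

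The exponent $2-2/\lfloor p/2\rfloor$ in the paper comes from a source your plan never invokes: the even-cycle Tur\'an number. The paper first rules out monochromatic stars $K_{1,p-2}$ (this is where Corr\'adi's Lemma is used), so that the pruned second color energy graph $\widetilde{G}^2$ exists on vertex sets $A_1\times A_2$ and $B_1\times B_2$ of size $\Theta(n^2)$, with $|E(\widetilde{G}^2)|=\Omega(n^4/m)$ by Proposition~\ref{prop:|C(G)|}. It then shows $\widetilde{G}^2$ is $C_{2k}$-free: a $2k$-cycle there projects to a subgraph of some $K_{p,p}$ with at least $p$ color repetitions, the bulk of the work being the case analysis when projected vertices or edges coincide, together with property (ii) of Definition~\ref{def:prune energy_graph} to harvest the one extra repetition needed when $p$ is odd. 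Theorem~\ref{th:Tu}~(iii) then gives $|E(\widetilde{G}^2)|=O\left((n^2)^{1+1/k}\right)$, whence $m=\Omega(n^{2-2/k})$. Your configuration is a book/$K_{2,2k}$-type structure, for which no Tur\'an bound beats the $N^{3/2}$ regime; to reach $2-2/k$ one must forbid a cyclic structure of length $2k$ in the energy graph, not $k$ disjoint pairs over two apexes.
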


In the case $p\geq 10$ and $p\equiv 2 \pmod{4}$, Theorem~\ref{th:C_p} is a special case of Theorem~\ref{th:r=2theta} below (with 2 substituted for $t$).

\begin{theorem}\label{th:r=2theta} For any integer $t\geq 2$ and odd number $s\geq 5$, let $p=(s-1)t+2$. Then
$r\left(K_{n,n}, K_{p,p}, p^2-st+1)\right)=\Omega\big(n^{2-2/s}\big).$
\end{theorem}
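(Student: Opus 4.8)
The plan is to prove the lower bound in contrapositive form: for a suitable constant $c=c(s,t)>0$, I will show that every edge-colouring $\chi$ of $K_{n,n}$, with sides $A$ and $B$, that uses at most $C\le c\,n^{2-2/s}$ colours contains a copy of $K_{p,p}$ receiving at most $p^2-st$ colours; since $q=p^2-st+1$, this yields $r(K_{n,n},K_{p,p},q)>c\,n^{2-2/s}$. Thus the task is to produce $p$-subsets $A'\subseteq A$ and $B'\subseteq B$ whose $p^2$ joining edges carry at least $st$ colour coincidences (i.e., there are at least $st$ more edges than colours among them).

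The engine is the bipartite Color Energy Method of Section~\ref{subsec:colorenergy} combined with Corr\'{a}di's Lemma and its generalization (Lemmas~\ref{le:Corradi},~\ref{le:gCorradi}). Because $C$ is small, the relevant (higher-order) bipartite colour-energy quantity of $\chi$ is forced to be large; running the method — the Cauchy--Schwarz/dyadic-pigeonhole steps of \cite{PoSh}, refined in the spirit of \cite{FiPS,BEHK} and transplanted to bipartite hosts, followed by a Corr\'{a}di-type extraction — should output a colour $c_0$ together with a bipartite configuration $\Theta$ on at most $p$ vertices per side in which colour coincidences are so densely packed that $\Theta$ already carries at least $st$ of them. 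The natural candidate for $\Theta$ is a ($c_0$-dominated) $\theta$-type configuration built on $t$ pairs of ``like-coloured'' vertices over a common set of $s$ witnesses, which is exactly why the parity of $s$ appears: it ensures the relevant graph $\theta_{s,t}$ is bipartite with its two hubs in opposite classes, hence realizable inside $K_{n,n}$ and inside a balanced complete bipartite graph, and it makes $\theta_{s,t}$ have exactly $p=(s-1)t+2$ vertices. The exponent $2-2/s$ is pinned down here: it is the density threshold below which the K\H{o}v\'{a}ri--S\'{o}s--Tur\'{a}n/generalized-Corr\'{a}di estimate forces $\Theta$.

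Once $\Theta$ is found the argument concludes quickly. Since $\Theta$ lives on at most $p$ vertices per side (and $p=(s-1)t+2\ge 2s\ge s$, $p\ge 2t$ for $s\ge5$, $t\ge2$), it extends to a copy of $K_{p,p}\subseteq K_{n,n}$ by adjoining dummy vertices to each side; the at least $st$ colour coincidences inside $\Theta$ remain colour coincidences inside this $K_{p,p}$, so the $K_{p,p}$ uses at most $p^2-st$ colours, contradicting economy. (For $t=2$ this specializes to Theorem~\ref{th:C_p} with $p=2s$, $p\equiv2\pmod 4$.)

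The main obstacle is precisely the localization step: proving that having only $c\,n^{2-2/s}$ colours already forces the densely packed configuration $\Theta$ rather than merely an $\Omega(1)$-sized excess. This is where the bipartite recasting of the Color Energy Method must be welded carefully to the generalized Corr\'{a}di estimate so that no power of $n$ is dropped in passing from ``globally there are many monochromatic pairs of edges'' to ``there is a $\theta$-type region, on $O(p)$ vertices, carrying at least $st$ of them''; controlling how the $s$ witnesses interact with the $t$ pairs, together with the parity case distinctions attached to $\theta$-type graphs, is the technical heart, while the concluding count is routine.
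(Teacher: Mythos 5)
Your high-level plan coincides with the paper's strategy: assume $|C(G)|=o(n^{2-2/s})$, pass to the (pruned) second colour energy graph $\widetilde{G}^2$, use the Tur\'{a}n number of theta graphs to force a theta configuration there, and convert it into a $K_{p,p}$ with at least $st$ colour repetitions; the parity of $s$ indeed serves exactly to put the two hubs in opposite partite sets, and $\mit\Theta$$(s,t)$ projects onto $p=(s-1)t+2$ vertices per side. But the step you yourself flag as ``the main obstacle'' is genuinely missing, and it is not merely a localization issue --- it is a degeneracy issue. An edge of $\widetilde{G}^2$ is a pair of equally coloured edges of $G$, so a copy of $\mit\Theta$$(s,t)$ in $\widetilde{G}^2$ yields $st$ colour coincidences \emph{only if} its $st$ edges project to $2st$ pairwise distinct edges of $G$. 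The pruning of the energy graph prevents coordinate collisions between vertices sharing a common neighbour, but it does not prevent two projected edges from coinciding; every such coincidence silently destroys one repetition, and your count of ``at least $st$ coincidences inside $\Theta$'' simply fails in that case. This is why the paper does not stop at a $\mit\Theta$$(s,t)$: it forbids the much larger $\mit\Theta$$(s,4s^2t)$ in $\widetilde{G}^2$ (which costs nothing in the exponent, since {\rm ex}$(m,\mit\Theta$$(a,b))=O(m^{1+1/a})$ for any fixed $b$), selects from its $4s^2t$ paths a collision-free $\mit\Theta$$(s,t)$ \emph{plus} two reservoirs $\widetilde{R}_A,\widetilde{R}_B$ of spare neighbours of the hubs, and then uses the reservoir lemma (Lemma~\ref{le:reservoir}) together with the $N/S/D$ bookkeeping to buy back, for every degenerate projection, a new repetition without pushing either side above $p$ vertices.

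Two further points you would have to supply. First, the exchange has a parity defect: adding $D$ reservoir edges in pairs recovers only $\lfloor D/2\rfloor$ repetitions, so when $D_B-\gamma$ is odd one repetition is still short; the paper closes this using property (ii) of the pruned energy graph (every surviving colour appears at least $\log n$ times in $G$), importing one extra edge of an already-used colour. Second, the pruned energy graph only exists after one shows $G$ has no monochromatic $K_{1,p-2}$; this is where the generalized Corr\'{a}di lemma actually enters (as in Claim~\ref{cl:r3theta-1}), not in the extraction of the theta configuration, which comes purely from the Bukh--Tait bound of Theorem~\ref{th:Tu}~(ii). Without the reservoir mechanism and the parity repair, the concluding count in your proposal is not ``routine'' --- it is exactly the part that can fail.
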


Note that the upper bound on $r\left(K_{n,n}, K_{(s-1)t+2,(s-1)t+2}, ((s-1)t+2)^2-st+1\right)$ given by (\ref{eq:UnbalanGener}) is $O\Big(n^{2-\frac{2}{s}+\frac{2}{st}}\Big)$, which is arbitrarily close to our lower bound for any fixed $s$ and sufficiently large $t$.

For any integers $t\geq 2$, $r\geq 3$ and odd number $s\geq 2r-1$, let $p=r((s-1)t/2+1)$. Using the upper bound (\ref{eq:UnbalanGener}), we obtain
$r\left(K_{n,n}, K_{p,p}, p^2-(r-1)st+2\right)=O\Big(n^{\frac{r(s-1)t+2r-2}{(r-1)st-1}}\Big)$. By straightforward calculations, it can be shown that $\frac{r(s-1)t+2r-2}{(r-1)st-1}<\frac{r}{r-1}\left(1-\frac{1}{s}+\frac{2}{st}\right)$. Hence, the following result provides a lower bound that is arbitrarily close to the upper bound for sufficiently large $t$.

\begin{theorem}\label{th:r3theta} For integers $t\geq 2$, $r\geq 3$ and odd number $s\geq 2r-1$, let $p=r((s-1)t/2+1)$. Then $r\left(K_{n,n}, K_{p,p}, p^2-(r-1)st+2\right)=\Omega\left(n^{\frac{r}{r-1}\left(1-\frac{1}{s}\right)}\right)$.
\end{theorem}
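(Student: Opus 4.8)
The plan is to use the bipartite Color Energy Method (as developed in Section~\ref{subsec:colorenergy}) in combination with Corr\'{a}di's Lemma, following the same overall architecture that drives Theorems~\ref{th:r=2theta} and~\ref{th:r3theta}'s predecessors. Suppose for contradiction that $K_{n,n}$ has a $(K_{p,p}, p^2-(r-1)st+2)$-coloring using fewer than $cn^{\frac{r}{r-1}(1-1/s)}$ colors, where $c$ is a small constant to be fixed. The key point is that if a copy of $K_{p,p}$ receives at most $p^2-(r-1)st+1$ colors, then it contains a "monochromatic-heavy" substructure: by an averaging/pigeonhole argument (the kind of repeated-color counting used in the Color Energy Method), few colors on $p^2$ edges force many repetitions, and one extracts a copy of $K_{s,t}$ (or a union of $r-1$ such copies on disjoint vertex sets, exploiting $p = r((s-1)t/2+1)$) that is "rainbow-deficient" enough to be captured by the color-energy invariant. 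So the coloring we assumed must in fact avoid a prescribed family of low-rainbow configurations, and the Color Energy Method converts this into an upper bound on the number of edges, hence a lower bound on the number of colors.

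The concrete steps I would carry out are: (1) Define, for the given edge-coloring of $K_{n,n}$, the bipartite color energy $E$ counting quadruples (or more general $s$-tuples on one side, $t$-tuples on the other) that are "color-consistent." (2) Establish a Cauchy–Schwarz-type lower bound $E \ge (\text{number of edges})^{\alpha}/(\text{number of colors})^{\beta}$ for the appropriate exponents $\alpha,\beta$ determined by $s$ and $r$ — this is where the exponent $\frac{r}{r-1}(1-\frac1s)$ is born, and the choice $s \ge 2r-1$ and $s$ odd enters to make the relevant binomial/degree count work out (odd $s$ lets one split an $s$-set into balanced pieces, and $s\ge 2r-1$ guarantees enough room to host $r-1$ disjoint copies). (3) Establish the matching upper bound on $E$: if $E$ were large, then many color-consistent tuples would overlap, and via Corr\'{a}di's Lemma (Lemma~\ref{le:Corradi}) applied to an auxiliary set system one finds a large "core" on which the coloring collapses onto a copy of $K_{p,p}$ spanning at most $p^2-(r-1)st+1$ colors — contradicting the assumed coloring. (4) Combining the two bounds on $E$ forces $n^{2} \le (\text{number of edges}) \le C(\text{number of colors})^{(r-1)/r \cdot s/(s-1)}\cdot n^{o(1)}$, i.e. the number of colors is $\Omega(n^{\frac{r}{r-1}(1-1/s)})$.

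The step I expect to be the main obstacle is step~(3): converting a large color-energy value into an honest copy of $K_{p,p}$ with the exact color deficiency $p^2-(r-1)st+2$ — not merely $\Omega(p^2)$ colors saved, but precisely enough to contradict the hypothesis. This requires carefully choosing how the $r-1$ disjoint $K_{s,t}$-copies are glued into a single $K_{p,p}$ on $p = r((s-1)t/2+1)$ vertices per side, tracking that each glued copy contributes its full color-saving without the savings being "double counted" across copies, and invoking Corr\'{a}di's Lemma in the right regime so that the extracted configuration is genuinely complete bipartite rather than just dense. I would also need the mild technical hypothesis $s\ge 2r-1$ precisely here, to ensure the $r-1$ copies fit disjointly inside the $p\times p$ frame; and the parity of $s$ to balance the two sides of each $K_{s,t}$ against the $(s-1)t/2$ appearing in $p$. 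Finally, one checks the claimed near-optimality by comparing with the upper bound $O\big(n^{(r(s-1)t+2r-2)/((r-1)st-1)}\big)$ from~\eqref{eq:UnbalanGener} and the stated inequality $\frac{r(s-1)t+2r-2}{(r-1)st-1} < \frac{r}{r-1}\big(1-\frac1s+\frac2{st}\big)$, which tends to the lower-bound exponent as $t\to\infty$.
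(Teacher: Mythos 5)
Your proposal has the right outer shell---assume $|C(G)|$ is small, pass to a (pruned) color energy graph, and derive a contradiction by exhibiting a $K_{p,p}$ with too few colors---but the two ideas that actually make the paper's proof work are absent, and the step you flag as ``the main obstacle'' is precisely where your sketch does not go through. First, the paper does not bound the energy by a generic Cauchy--Schwarz count tuned to $s$ and $r$; it shows that the pruned $r$th color energy graph $\widetilde{G}^r$ contains no theta graph $\mit\Theta$$(s,2rs^2t)$, and then invokes the Tur\'{a}n bound ex$(N,\mit\Theta$$(a,b))=O(N^{1+1/a})$ (Theorem~\ref{th:Tu}~(ii)). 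This is the sole source of the factor $1-\tfrac1s$ in the exponent: $|E(\widetilde{G}^r)|=O(n^{r(1+1/s)})$ fed into $|C(G)|=\Omega\big((n^{2r}/|E(\widetilde{G}^r)|)^{1/(r-1)}\big)$ gives exactly $\Omega\big(n^{\frac{r}{r-1}(1-\frac1s)}\big)$. Without naming this (or an equivalent) forbidden configuration, your step~(2) cannot produce the claimed exponent. Second, converting a copy of $\mit\Theta$$(s,2rs^2t)$ in $\widetilde{G}^r$ into a subgraph of $K_{p,p}$ with at least $(r-1)st-1$ color repetitions is done via the reservoir machinery of Section~\ref{sec:adv} (an $H$-compatible ordering, the bookkeeping quantities $N_A,S_A,D_A,\dots$, Lemma~\ref{le:reservoir}, and the inequality $S_A+S_B\geq rt$ of Claim~\ref{cl:r3theta-2} to keep $|V(F^\ast)\cap B|\leq p$). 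Your alternative of gluing $r-1$ vertex-disjoint copies of $K_{s,t}$ into the $p\times p$ frame is not what happens and it is not clear it can be made to work: the repetitions come from the projections $\pi_k$ of a single theta graph plus extra edges drawn from a reservoir of pendant neighbours of its two hubs, not from disjoint $K_{s,t}$'s.

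Two further misattributions are worth correcting. The parity hypothesis on $s$ is used only to guarantee that the two degree-$2rs^2t$ vertices of the theta graph lie in opposite partite sets of $\widetilde{G}^r$ (a path of odd length $s$ switches sides), not to ``split an $s$-set into balanced pieces.'' The hypothesis $s\geq 2r-1$ enters in the preliminary step showing $G$ has no monochromatic $K_{1,p-r}$: there one applies the generalized Corr\'{a}di Lemma~\ref{le:gCorradi} to $r$-wise intersections, and the computation verifying $|B_{j_1}\cup\cdots\cup B_{j_r}\cup\{b_1\}|\leq p$ needs $-st+(2r-1)t-2\leq 0$, i.e.\ $s\geq 2r-1$ up to lower-order terms. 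It is not about fitting $r-1$ disjoint copies inside the frame. So while your high-level plan is compatible with the paper's, the proposal as written has a genuine gap at its core reduction step.
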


Using (\ref{eq:UnbalanGener}), we can also obtain that $r\left(K_{n,n}, K_{2t,t(t-1)}, 2t^2(t-1)-t(t-1)+1\right)=O\big(n^{1+2/t}\big)$ for any integer $t\geq 2$. We will prove the following lower bound result.

\begin{theorem}\label{th:Kt+} For any integer $t\geq 2$, we have $r\left(K_{n,n}, K_{2t,t(t-1)}, 2t^2(t-1)-t(t-1)+1\right)= \Omega\left(n^{1+\frac{1}{2t-3}}\right)$.
\end{theorem}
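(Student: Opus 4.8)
The plan is to establish the lower bound $r\left(K_{n,n}, K_{2t,t(t-1)}, 2t^2(t-1)-t(t-1)+1\right) = \Omega\big(n^{1+1/(2t-3)}\big)$ by a color energy argument, tailored to the bipartite setting as developed in Section~\ref{subsec:colorenergy}. First I would observe that the threshold $q = 2t^2(t-1)-t(t-1)+1 = st - t(t-1) + 1$ with $s = 2t$, $t' = t(t-1)$ is exactly of the form $st' - (\text{something}) + 1$, so that a $(K_{s,t'}, q)$-coloring of $K_{n,n}$ forbids, in every copy of $K_{2t, t(t-1)}$, having at most $t(t-1)$ repeated colors; equivalently, it forces many distinct colors. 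The key structural point is to translate "few colors" into a dense monochromatic-ish substructure: suppose $K_{n,n}$ is edge-colored with $m = r(K_{n,n}, K_{2t,t(t-1)}, q) - 1$ colors, so some color class, or rather the color-degree structure around vertices, is large on average. I would then set up the bipartite color energy $\mathcal{E}$ counting quadruples (or the appropriate higher-order analogue) of vertices on one side together with pairs on the other side that are joined by edges of matching colors, and apply a Cauchy–Schwarz / convexity estimate to bound $\mathcal{E}$ from below by roughly $n^{4}/m$ (or the relevant power), exactly as in the refinements of Pohoata–Sheffer.

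The second main step is the counting upper bound on the color energy: if $\mathcal{E}$ is too large, then by a supersaturation / Kővári–Sós–Turán type argument one finds a copy of $K_{2t, t(t-1)}$ in which the colors repeat too often, contradicting the $(K_{2t,t(t-1)}, q)$-coloring property. This is where the specific numbers $2t$ and $t(t-1)$ and the exponent $2t-3$ enter: the parameter $t(t-1) = \binom{t}{2}\cdot 2$ counts pairs, and a copy of $K_{2t, t(t-1)}$ can be forced to contain, on its side of size $t(t-1)$, vertices each contributing a monochromatic pair to the side of size $2t$, so that the number of repeated colors reaches $t(t-1)+1$. Concretely, I expect to use Corrádi's Lemma (Lemma~\ref{le:Corradi}) or its generalization (Lemma~\ref{le:gCorradi}) to guarantee that many vertices on one side share a large common neighborhood in a fixed color, and then a second application of a Turán-type bound (or a direct pigeonhole on which pairs of the $2t$-set are "witnessed" by a common color) to assemble the $t(t-1)$ vertices on the other side. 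Balancing the two bounds — $\mathcal{E} \gtrsim n^{4}/m$ against $\mathcal{E} \lesssim n^{4 - \varepsilon} + m \cdot n^{c}$ with the exponents dictated by the $(2t, t(t-1))$ geometry — yields $m = \Omega\big(n^{1 + 1/(2t-3)}\big)$.

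I anticipate that the main obstacle is the counting upper bound on the color energy, specifically getting the exact exponent $2t - 3$ rather than something weaker. The delicate point is that $K_{2t, t(t-1)}$ is highly unbalanced, so a naive Kővári–Sós–Turán bound applied with the "wrong" side as the small side loses too much; one must apply it with the $2t$-vertex side playing the role whose neighborhoods collide, and carefully account for the fact that we need $t(t-1)$ distinct collision-vertices realizing $t(t-1)$ distinct repeated colors (one per pair, essentially). Making the color accounting tight — ensuring the repeated colors we find are genuinely $\geq t(t-1)+1$ in number and not merely $\geq$ some smaller quantity — is where the argument has to be done with care, and is likely the step requiring the generalized Corrádi Lemma rather than its basic form. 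A secondary technical point is handling the degenerate ranges of $n$ (the bound is only meaningful for $n$ large compared to $t$), which I would dispatch with the standard remark that the asymptotic statement absorbs small cases into the implied constant.
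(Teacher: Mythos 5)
Your general framework is the right one --- the paper does use the bipartite color energy method with $r=2$, lower-bounds the energy by $n^{4}/|C(G)|$ via convexity (Proposition~\ref{prop:|C(G)|}), rules out large monochromatic stars with Corr\'{a}di's Lemma so that the pruned energy graph $\widetilde{G}^2$ exists, and derives the contradiction from a forbidden subgraph of $\widetilde{G}^2$. But there is a genuine gap at exactly the step you flag as the main obstacle: you never identify what the forbidden subgraph is, and the tools you propose for the counting upper bound (K\H{o}vari--S\'{o}s--Tur\'{a}n, supersaturation, a pigeonhole on pairs) cannot produce the exponent $1/(2t-3)$. A KST bound for $K_{a,b}$-free graphs gives $\mathrm{ex}=O(N^{2-1/a})$ with integer $a$, and feeding $2-1/a=3/2-1/(4t-6)$ into the energy machinery would require $a=2-1/(t-1)$, which is not an integer; so no KST-type collision count closes the argument with the claimed exponent.

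The missing idea is that the forbidden subgraph of $\widetilde{G}^2$ is the \emph{subdivision} $K^{1}_{t}$ of $K_t$: a copy of $K^1_t$ in $\widetilde{G}^2$ has $t$ branch vertices on the $A^2$ side and $t(t-1)/2$ subdivision vertices on the $B^2$ side, so its corresponding structure in $G$ sits inside a $K_{2t,\,t(t-1)}$ and, once one checks (via property (iii) of Definition~\ref{def:prune energy_graph}) that its $2t(t-1)$ edges are genuinely distinct, it carries $t(t-1)$ color repetitions --- one more than allowed. (Your intuition that each vertex on the $t(t-1)$ side ``contributes a monochromatic pair to the $2t$ side'' is precisely the subdivision picture, but you need $t(t-1)$ repetitions, not $t(t-1)+1$, and you never pin down the graph.) The exponent then comes entirely from Janzer's bound $\mathrm{ex}(N, K^{1}_{t})=O\bigl(N^{3/2-1/(4t-6)}\bigr)$ (Theorem~\ref{th:Tu}~(i)): applied with $N=\Theta(n^2)$ this gives $|E(\widetilde{G}^2)|=O\bigl(n^{3-1/(2t-3)}\bigr)$ and hence $|C(G)|=\Omega\bigl(n^{1+1/(2t-3)}\bigr)$. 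Without naming $K^1_t$ and invoking this subdivision Tur\'{a}n number, the balancing step you describe cannot be carried out, so the proposal as written does not yield the theorem. (A further point you leave untreated: the degeneracy check that the corresponding structure in $G$ really has $2t(t-1)$ distinct edges, which occupies the second half of the paper's proof; and the small cases $t=2,3$, which the paper handles separately by citing earlier results rather than by absorbing them into constants.)
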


Similarly, using (\ref{eq:UnbalanGener}), we have that $r\left(K_{n,n}, K_{s,st}, s^2t-t(s-1)+1\right)= O\left(n^{1+\frac{1}{t}+\frac{t-1}{st-t}}\right)$ for any integers $s, t\geq 2$. We will obtain the following lower bound for these choices of the parameters.

\begin{theorem}\label{th:Ksst} For integers $s, t\geq 2$, we have $r\left(K_{n,n}, K_{s,st}, s^2t-t(s-1)+1\right)=\Omega\big(n^{1+1/t}\big)$.
\end{theorem}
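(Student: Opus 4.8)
The plan is to prove the lower bound $r\big(K_{n,n}, K_{s,st}, s^2t-t(s-1)+1\big)=\Omega(n^{1+1/t})$ by a counting/color-energy argument: I will assume for contradiction that some edge-coloring $\chi$ of $K_{n,n}$ with $N=o(n^{1+1/t})$ colors avoids the target, i.e.\ there is a copy of $K_{s,st}$ receiving at most $s^2t-t(s-1)$ colors, and derive a contradiction by exhibiting many such copies via an averaging estimate. The threshold $q=s^2t-t(s-1)+1=s\cdot st-(s-1)t+1$ is exactly $e(K_{s,st})-(s-1)t+1$, so a "bad" copy of $K_{s,st}$ is one with at least $(s-1)t$ \emph{repeated} colors (counting multiplicities in the standard way, i.e.\ $e(H)$ minus the number of distinct colors $\geq (s-1)t$). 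The natural structure producing $(s-1)t$ repetitions is a collection of $t$ pairwise color-disjoint "monochromatic matchings of size $s$" inside the $s\times(st)$ biclique — more precisely, partition the $st$-side into $t$ blocks of size $s$, and in block $i$ the $s^2$ edges to the $s$-side should use few colors. This suggests setting up the bipartite color-energy (or a Corrádi-type) count: fix the $s$-side vertex set $S$ (there are $\binom{n}{s}$ choices), and on the other side look at vertices $v$ together with the color pattern $\chi(v,S)\in[N]^s$; two vertices $v,v'$ with $\chi(v,S)=\chi(v',S)$ contribute repeated colors.

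The key steps, in order: (1) Extend the bipartite Color Energy Method of Section \ref{subsec:colorenergy} to count, for a fixed $S\in\binom{[n]}{s}$, the number of $t$-tuples of vertices on the other side whose color-vectors to $S$ coincide (or more generally lie in a small set), using convexity (Cauchy--Schwarz / Jensen) on the color-degree sequence: if $N$ colors are used, the number of monochromatic "stars" $K_{1,s}$ rooted at $S$ is at least $n^s/N^{s-1}$-ish after summing over $S$, and a second moment gives $\Omega(n^{t}/N^{t-1})$ many $t$-sets of vertices with a common color pattern to some $S$. (2) Choose parameters so that $n^{t}/N^{t-1}\gg n^{?}$ forces, for some $S$, at least $st$ vertices on the other side organized into $t$ color-pattern-classes of size $s$ each, which then span a copy of $K_{s,st}$ with $\geq (s-1)t$ repeated colors — a bad copy. (3) Conclude that $N=\Omega(n^{1+1/t})$, matching the exponent $1+1/t$. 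I expect the bookkeeping to mirror the proofs of Theorems \ref{th:Kt+} and \ref{th:r=2theta}, using Corrádi's Lemma (Lemma \ref{le:Corradi}) or its generalization (Lemma \ref{le:gCorradi}) to pass from "many monochromatic stars" to "a dense monochromatic-pattern substructure."

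The main obstacle will be step (2): ensuring that the $t$ color-pattern classes can be chosen to be \emph{mutually color-disjoint} (or at least that the total number of distinct colors in the resulting $K_{s,st}$ is at most $s^2t-t(s-1)$), rather than merely each internally repetitive. A single color class of size $s$ attached to a fixed $S$ gives only $\binom{s}{2}$-type savings if $S$ sees one color, but to get the full deficit $(s-1)t$ one needs, per block, that the $s^2$ edges use at most $s^2-(s-1)=s^2-s+1$ colors, which is guaranteed if the $s$ block-vertices share a common color-vector to $S$ (then each of the $s$ colors of $\chi(\cdot,S)$ is repeated $s$ times, a saving of $s-1$ per block, hence $(s-1)t$ over $t$ blocks). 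So the argument should actually force "pattern-identical" $s$-sets, one per block, with no disjointness needed across blocks — the savings are counted within blocks only. The remaining delicate point is the convexity computation showing $N=o(n^{1+1/t})$ yields $\geq t$ disjoint pattern-classes of size $\geq s$ sharing a common $S$; this is where the exponent $1/t$ enters, via a $t$-fold iteration (or a single application of Lemma \ref{le:gCorradi} with the right parameters) on the bipartite graph whose parts are $\binom{[n]}{s}$ and $[n]$, joined when $v$'s color-vector to $S$ is "popular."
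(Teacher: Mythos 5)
There is a genuine gap at your step (2), and it stems from choosing the wrong target structure. You aim to find, for a single $s$-set $S$ in one part, $t$ blocks of $s$ vertices in the other part whose color vectors $\chi(\cdot,S)\in[N]^s$ coincide within each block. But for a fixed $S$ there are $N^s\gg n$ possible vectors distributed over only $n$ vertices, so pigeonhole gives nothing, and the moment estimates you sketch (counts of monochromatic stars, or of $t$-sets with ``a common color pattern to some $S$'') do not bound from below the number of pairs $v,v'$ whose colors to $S$ agree in \emph{all} $s$ coordinates simultaneously: the hypotheses only control $\sum_{v,v'}|R(v,v')|$ with $R(v,v')=\{u: c(uv)=c(uv')\}$, which is $O(n^2)$ once monochromatic degrees are bounded, so the average of $|R(v,v')|$ is $O(1)$ and nothing forces even a single coinciding pair. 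Indeed, the general upper bound (1) gives colorings with $O(n)=o(n^{1+1/t})$ colors in which every $K_{s,2}$ already receives at least $s+1$ colors, hence no two vertices have coinciding color vectors to any $s$-set at all; so your target structure is simply not forced in the regime you work in, and the point you defer as ``delicate'' is in fact where the approach fails.

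The paper's two proofs both use the looser structure that your opening sentence actually describes before the plan drifts: $s$ vertices of one part that are each incident \emph{somewhere in $G$} with the same $t$ colors. The $st$ witnessing edges form $s$ stars, span at most $st$ vertices of the other part, and use only $t$ colors, so they sit inside a $K_{s,st}$ with $t(s-1)$ repetitions --- no common landing set on the other side is needed, which is precisely why the target graph is $K_{s,st}$ rather than $K_{s,s}$. After excluding monochromatic $K_{1,t(s-1)+1}$ (your observation here matches the paper's Fact~\ref{fa:Ksst2nd1}), this structure is forced either by applying the Zarankiewicz bound to the vertex--color incidence bipartite graph, which has $\Omega(n^2)$ edges on parts of sizes $n$ and $|C(G)|$ and must contain the required $K_{t,s}$ unless $|C(G)|=\Omega(n^{1+1/t})$ (the second proof, via Lemma~\ref{le:K|U||EH|} and Theorem~\ref{th:Tu}~(iv)), or by a dyadic decomposition of the color-class sizes combined with Lemma~\ref{le:Erd64} applied to $t$-wise intersections of the sets $A_i$ of vertices incident with color $i$, which bounds $|E(G^r)|$ and yields the lower bound through Proposition~\ref{prop:|C(G)|} (the first proof). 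Your quantity $n^t/N^{t-1}$ and the ``$t$-fold iteration'' are the right numerology, but they must be applied to $t$-tuples of \emph{colors} sharing $s$ commonly incident vertices, not to $t$-tuples of vertices sharing a color pattern over a fixed $S$.
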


Finally, we provide some results which will be proved without using the Color Energy Method. Axenovich, F\"{u}redi and Mubayi \cite{AxFM} proved that $r\left(K_{n,n}, K_{p,p}, p^2-2p+2\right)=O\big(n^{1-1/(2p-1)}\big)$ for $p\geq 2$. We will prove the following lower bound result.

\begin{theorem}\label{th:2p-2} For any integer $p\geq 2$, we have $r\left(K_{n,n}, K_{p,p}, p^2-2p+2\right)=\Omega\big(n^{1-1/p}\big)$.
\end{theorem}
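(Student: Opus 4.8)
The plan is to argue by contradiction: assume $K_{n,n}$ admits a $(K_{p,p},\,p^2-2p+2)$-coloring using $N$ colors, and show $N=\Omega(n^{1-1/p})$ by proving that each color class must be sparse. The single key observation is the following. Let $G_\gamma$ denote the spanning subgraph of $K_{n,n}$ formed by the edges of color $\gamma$. I claim that no $G_\gamma$ can contain a cycle $C_{2p}$. Indeed, since $K_{n,n}$ is bipartite, the $2p$ vertices of any such cycle split into a set $\{a_1,\dots,a_p\}$ of $p$ vertices in one part and a set $\{b_1,\dots,b_p\}$ of $p$ vertices in the other part. These $2p$ vertices span a copy of $K_{p,p}$ in $K_{n,n}$, and among its $p^2$ edges the $2p$ cycle-edges all have color $\gamma$. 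Hence this copy of $K_{p,p}$ receives at most $p^2-(2p-1)=p^2-2p+1$ colors, which is strictly smaller than $q=p^2-2p+2$, contradicting the defining property of a $(K_{p,p},q)$-coloring. (For $p=2$ this is just the statement that no color class contains a $C_4$.)

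Given this, every color class $G_\gamma$ is a $C_{2p}$-free subgraph of $K_{n,n}$, a graph on $2n$ vertices. By the Bondy--Simonovits even-cycle theorem, a $C_{2p}$-free graph on $m$ vertices has $O(m^{1+1/p})$ edges, so $e(G_\gamma)\le c_p\,n^{1+1/p}$ for some constant $c_p=c_p(p)$ and all sufficiently large $n$. Since the color classes partition the $n^2$ edges of $K_{n,n}$, we get
\[
n^2=\sum_{\gamma=1}^{N} e(G_\gamma)\ \le\ N\,c_p\,n^{1+1/p},
\]
which yields $N\ge c_p^{-1}\,n^{1-1/p}=\Omega\!\big(n^{1-1/p}\big)$, as desired.

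The argument is short, and there is no substantial obstacle once the key observation is in place; the only points requiring a little care are the edge-count bookkeeping (a monochromatic $2p$-cycle forces exactly a $(2p-1)$-fold drop in the number of colors available on a $p^2$-edge copy of $K_{p,p}$, so $p^2-2p+1<q$ exactly at the threshold), the verification that a $C_{2p}$ in the bipartite host genuinely distributes $p$ vertices to each side, and ensuring that the constant $c_p$ and the lower bound on $n$ needed for the Bondy--Simonovits estimate are absorbed into the $\Omega$-notation. It is worth noting why other natural choices are weaker: replacing $C_{2p}$ by a structure such as $K_{2,p}$ (which also has $2p$ edges and fits inside $K_{p,p}$) would only give the bound $\mathrm{ex}(K_{n,n};K_{2,p})=\Theta(n^{3/2})$ and hence merely $\Omega(n^{1/2})$, so the use of the cycle $C_{2p}$, with its optimal Tur\'an exponent $1+1/p$, is essential.
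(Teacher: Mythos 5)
Your argument is correct and is essentially identical to the paper's proof: both show that each color class must be $C_{2p}$-free (a monochromatic $2p$-cycle spans a $K_{p,p}$ with at most $p^2-2p+1$ colors), apply the even-cycle Tur\'an bound $O(n^{1+1/p})$ to each class, and divide $n^2$ by this bound. The only cosmetic difference is that the paper invokes the bipartite bound ex$(n,n,C_{2p})=O(n^{1+1/p})$ of Naor and Verstra\"ete rather than Bondy--Simonovits on $2n$ vertices, which gives the same exponent.
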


Let $s, t, a$ and $b$ be four integers with $2\leq a\leq s$, $2\leq b\leq t$ and $ab\geq s+t$. By Theorem~\ref{th:UnbalanceLin} (iii), we have $r(K_{n,n}, K_{s,t}, st-ab+2)=o(n)$. We will obtain the following lower bound using the result of K\H{o}v\'{a}ri, S\'{o}s and Tur\'{a}n \cite{KoST} that ex$(n, n, K_{a,b})= O\left(n^{2-1/a}\right)$ for $b\geq a\geq 1$.

\begin{theorem}\label{th:Kab*} For integers $s, t, a$ and $b$ with $2\leq a\leq s$, $2\leq b\leq t$ and $ab\geq s+t$, we have $r(K_{n,n}, K_{s,t}, st-ab+2)=\Omega\big(n^{1/\min\{a,b\}}\big)$.
\end{theorem}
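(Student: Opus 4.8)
The plan is to exploit that a single monochromatic copy of $K_{a,b}$ inside a copy of $K_{s,t}$ already forces the number of colors on that $K_{s,t}$ to be at most $st-ab+1=q-1$, so it suffices to produce such a monochromatic $K_{a,b}$ whenever the coloring uses few colors. Write $K_{n,n}$ with parts $X,Y$ of size $n$, put $q=st-ab+2$, and suppose for contradiction that $\phi$ is a $(K_{s,t},q)$-coloring of $K_{n,n}$ using $m$ colors with $m$ small (quantified below).

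First I would use averaging: one of the $m$ color classes, say $G\subseteq K_{n,n}$, satisfies $e(G)\geq n^2/m$. The result of K\"{o}v\'{a}ri, S\'{o}s and Tur\'{a}n quoted above gives $\mathrm{ex}(n,n,K_{a,b})=O(n^{2-1/a})$ for $b\geq a$, and combining it with the symmetry $\mathrm{ex}(n,n,K_{a,b})=\mathrm{ex}(n,n,K_{b,a})$ (swap $X$ and $Y$) yields $\mathrm{ex}(n,n,K_{a,b})=O(n^{2-1/\min\{a,b\}})$. Hence there is a constant $c=c(a,b)>0$ such that, once $n$ is large, $m<c\,n^{1/\min\{a,b\}}$ forces $e(G)>\mathrm{ex}(n,n,K_{a,b})$, so $G$ contains a copy of $K_{a,b}$, all $ab$ of whose edges are monochromatic under $\phi$.

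Next I would complete this $K_{a,b}$ to a copy of $K_{s,t}$: its $a+b$ vertices lie $a$ in one part of $K_{n,n}$ and $b$ in the other, and since $a\leq s$, $b\leq t$, $K_{s,t}\cong K_{t,s}$ and $n\geq\max\{s,t\}$, we may add further vertices inside the two parts to obtain a copy $K$ of $K_{s,t}$ in $K_{n,n}$ containing the monochromatic $K_{a,b}$. Then $K$ has $st$ edges, of which $ab$ carry one common color and the remaining $st-ab$ carry at most $st-ab$ colors, so $K$ receives at most $st-ab+1=q-1<q$ colors, contradicting that $\phi$ is a $(K_{s,t},q)$-coloring. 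Therefore every $(K_{s,t},q)$-coloring of $K_{n,n}$ uses at least $c\,n^{1/\min\{a,b\}}$ colors for all large $n$, i.e.\ $r(K_{n,n},K_{s,t},st-ab+2)=\Omega(n^{1/\min\{a,b\}})$.

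There is no real obstacle here; the proof is a pigeonhole step followed by one application of K\"{o}v\'{a}ri--S\'{o}s--Tur\'{a}n. The only things to watch are: invoking that theorem in the orientation that produces the exponent $1/\min\{a,b\}$ (which is exactly where $a\leq s$ and $b\leq t$ are used, so that a $K_{a,b}$ in either orientation completes to a $K_{s,t}$); and observing that the hypothesis $ab\geq s+t$ plays no role in the proof itself — it only ensures $q\leq st-s-t+2$, so that by Theorem~\ref{th:UnbalanceLin}(iii) the function is $o(n)$ and the bound $\Omega(n^{1/\min\{a,b\}})$ is a genuine sublinear estimate.
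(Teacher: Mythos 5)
Your proposal is correct and is essentially the paper's proof (of the refined Theorem~\ref{th:Kab*+}): both arguments reduce the problem to the observation that every color class must be $K_{a,b}$-free (else a monochromatic $K_{a,b}$ extends to a $K_{s,t}$ with at most $st-ab+1$ colors) and then apply the K\"{o}v\'{a}ri--S\'{o}s--Tur\'{a}n bound; your contrapositive/pigeonhole phrasing versus the paper's direct bound $|C(G)|\geq n^2/\mbox{z}(n,n;a,b)$ is an immaterial difference. Your side remarks --- that the orientation issue is handled by $a\leq s$, $b\leq t$, and that the hypothesis $ab\geq s+t$ is not used in the proof --- also match the paper, which drops that hypothesis in its refined statement.
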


In the case $a\leq b$ and $a+b\leq s+t-2$, the following result improves Theorem~\ref{th:Kab*}. To see this, note that $st-a(s+t-a-2)+1<st-ab+2$ when $a+b\leq s+t-2$. Recall that we have the property that $r(K_{n,n}, K_{s,t}, k)\leq r(K_{n,n}, K_{s,t}, \ell)$ for $2\leq k\leq \ell\leq st$. Thus Theorem~\ref{th:induction} gives the same lower bound for a smaller number of colors on each $K_{s,t}$.

\begin{theorem}\label{th:induction} For integers $s, t$ and $a$ with $2\leq a\leq s\leq t$ and $a(s+t-a-2)\geq s+t-1$, we have $r\big(K_{n,n}, K_{s,t}, st-a(s+t-a-2)+1\big)=\Omega\big(n^{1/a}\big)$.
\end{theorem}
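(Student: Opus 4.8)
The plan is to prove the lower bound $r(K_{n,n}, K_{s,t}, st-a(s+t-a-2)+1)=\Omega(n^{1/a})$ by an inductive/iterative argument that reduces a $(K_{s,t}, st-a(s+t-a-2)+1)$-coloring of $K_{n,n}$ with few colors to an edge-coloring of a smaller complete bipartite graph in which every $K_{a,a}$ (or, more precisely, a related structure on $a+a$ vertices) sees a forbidden-small number of colors, eventually forcing a contradiction with a known lower bound such as Theorem~\ref{th:Kab*} or with an extremal (K\"ov\'ari--S\'os--Tur\'an type) count. First I would interpret the parameter: writing $q=st-a(s+t-a-2)+1$, the complement $st-q+1=a(s+t-a-2)$ is exactly the number of edges in a $K_{s,t}$ that meet a set of $a$ vertices chosen so as to maximize incident edges but with $a$ of the "within-a-part" pairs left out — concretely, an $(a-1)$-by-$t$ plus $s$-by-$1$ type bound, i.e.\ $a$ vertices in an optimally placed pattern dominate $a(s+t-a-2)$ edges. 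So a copy of $K_{s,t}$ receiving at least $q$ colors is the same as saying: it is impossible to "destroy" all but $q-1$ colors by deleting $a$ well-chosen vertices. Equivalently, in any $K_{s,t}\subseteq K_{n,n}$ there is no set of $a$ vertices covering all the color classes except at most $q-1-$many — this is precisely the hypothesis needed to run an induction on $a$.

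The key steps, in order, would be: (1) Suppose for contradiction that $r(K_{n,n}, K_{s,t}, q)=o(n^{1/a})$, so we have a coloring $\chi$ of $E(K_{n,n})$ with $N=o(n^{1/a})$ colors in which every $K_{s,t}$ gets $\geq q$ colors. (2) For a suitable "popular" color $c$, restrict to the subgraph $G_c$ of non-$c$ edges (or of $c$-edges); a standard averaging/pigeonhole step — the same kind used in the bipartite extremal arguments earlier in the paper — finds vertices whose non-$c$ neighborhoods are large, passing to $K_{n',n'}$ with $n'\geq n/N = \omega(n^{1-1/a})$ (or an appropriate polynomial fraction), in which *every $K_{s,t}$ now requires $\geq q$ colors none of which is $c$*, hence every $K_{s,t}$ in the reduced graph gets $\geq q$ colors from a palette of size $N-1$. (3) Iterate $a-1$ times, each step both peeling off one color and shrinking the host graph by a controlled factor and simultaneously shrinking the "effective" parameters $(s,t)$ down toward a base case (the bound $a(s+t-a-2)\geq s+t-1$ guarantees the arithmetic stays in the admissible range at every stage, which is exactly why that hypothesis appears). (4) Reach a base configuration — essentially a proper-like coloring forced on some $K_{m,m}$ with $m=\omega(1)$ where a single $K_{s,t}$ would already be monochromatic or nearly so, contradicting $q\geq 2$; combining the $a$-fold size loss $n^{1/a}\cdot$(const) against the assumed $N=o(n^{1/a})$ yields the contradiction.

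The main obstacle I expect is step (3): controlling the simultaneous degradation of the host size and of the target graph $K_{s,t}$ so that at every one of the $a-1$ rounds the surviving complete bipartite graph is still large enough (a polynomial, not merely polylogarithmic, fraction of $n$) *and* the residual color requirement $q$ stays above the trivial threshold $2$ for the residual $K_{s',t'}$. Making this quantitative requires choosing the popular color at each stage via a careful weighting (not just by frequency but by how many copies of the shrinking target it "covers"), and verifying that the inequality $a(s+t-a-2)\geq s+t-1$ — together with $2\le a\le s\le t$ — is exactly the combinatorial condition ensuring $st-a(s+t-a-2)+1$ decreases slowly enough relative to $e(K_{s',t'})$ that we never undershoot. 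A secondary technical point is handling the asymmetry when $a<s$ (so we delete $a$ vertices but the "unused" pairs must be tracked on the correct side), for which I would set up the induction with an explicit invariant of the form "every $K_{s,t}$ in the current graph receives more than $st - a'(s+t-a'-2)$ colors" and decrement $a'$ by one per round; once $a'=1$ the statement becomes "every $K_{s,t}$ is non-monochromatic," which is the clean base case to contradict via the remaining size budget.
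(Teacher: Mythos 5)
Your proposal has a genuine gap in its central mechanism. The reduction step you describe --- pass to a complete bipartite subgraph $K_{n',n'}$ with $n'\geq n/N$ in which no edge carries the popular color $c$ --- is not justified and in general fails quantitatively: the complement of a single color class inside $K_{n,n}$ need not contain a complete bipartite subgraph of polynomial size of that order (this is a Zarankiewicz-type question, and the guaranteed size is far smaller than $n/N$), so the iteration cannot even start with the claimed parameters. The invariant-decrementing scheme is also unjustified: passing from ``every $K_{s,t}$ receives more than $st-a'(s+t-a'-2)$ colors'' to the same statement with $a'-1$ requires the color demand to increase by $a'(s+t-a'-2)-(a'-1)(s+t-a'-1)=s+t-2a'-1$, and deleting one color from the palette does not produce this jump. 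Finally, the base case is misstated: at $a'=1$ the invariant reads ``every $K_{s,t}$ receives more than $st-s-t+3$ colors,'' which is nowhere near ``non-monochromatic.''

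The actual proof is much more direct and does not peel colors or shrink the target graph at all. Working in the contrapositive, assume $|C(G)|<\big(\frac{n}{t-1}\big)^{1/a}$. Fix $u_1,\dots,u_a$ on one side and, by pigeonhole, build a nested chain $V\supseteq V_1\supseteq\cdots\supseteq V_a$ with $|C(u_i,V_i)|=1$ and $|V_i|\geq |V_{i-1}|/|C(G)|$, so that $|V_a|>n/|C(G)|^a\cdot\frac{1}{1}>t-1$, hence $|V_a|\geq t$; choose $V'\subseteq V_a$ of size $t$. If $a<s$, repeat the same nesting on the other side starting from $a$ vertices $v_1,\dots,v_a\in V'$ to obtain $U'$ of size $s-a$ with $|C(v_i,U')|=1$ for all $i$. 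The resulting $K_{s,t}$ on $\{u_1,\dots,u_a\}\cup U'$ and $V'$ uses at most $a+a+(s-a)(t-a)=st-a(s+t-a-2)$ colors, which is the desired contradiction. Your closing intuition that ``$a$-fold size loss against $N^a=o(n)$'' drives the bound is the right quantitative heart of the matter, but the correct way to spend those $a$ pigeonhole steps is on nested monochromatic neighborhoods of individual vertices, not on successive recolorings of shrinking host graphs.
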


As corollaries of Theorems~\ref{th:Kab*} and \ref{th:induction}, we obtain the following results for balanced complete bipartite graphs $K_{p,p}$.

\begin{corollary}\label{co:Kab} Let $p, a$ and $b$ be three integers with $2\leq a\leq b\leq p$.
\begin{itemize}
\item[{\rm (i)}] If $ab\geq 2p$, then $r\left(K_{n,n}, K_{p,p}, p^2-ab+2\right)=\Omega\big(n^{1/a}\big)$.
\item[{\rm (ii)}] If $a(2p-a-2)\geq 2p-1$, then $r\left(K_{n,n}, K_{p,p}, p^2-a(2p-a-2)+1\right)=\Omega\big(n^{1/a}\big)$.
\end{itemize}
\end{corollary}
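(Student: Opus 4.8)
The plan is to obtain both parts of Corollary~\ref{co:Kab} as direct specializations of the two unbalanced results to the symmetric case $s=t=p$; no new idea is required.

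For part~(i) I would apply Theorem~\ref{th:Kab*} with $s=t=p$. Under the assumption $2\le a\le b\le p$, the hypotheses $2\le a\le s$ and $2\le b\le t$ of that theorem hold, and the extra hypothesis $ab\ge s+t$ becomes $ab\ge 2p$, which is exactly what part~(i) assumes; one also checks in passing that the number of colors $p^2-ab+2$ lies in the admissible range, since $2\le ab\le p^2$. Theorem~\ref{th:Kab*} then gives $r(K_{n,n},K_{p,p},p^2-ab+2)=\Omega(n^{1/\min\{a,b\}})$, and $\min\{a,b\}=a$ because $a\le b$, which is the claimed bound.

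For part~(ii) I would apply Theorem~\ref{th:induction} with $s=t=p$. Its hypotheses $2\le a\le s\le t$ and $a(s+t-a-2)\ge s+t-1$ specialize to $2\le a\le p$ and $a(2p-a-2)\ge 2p-1$, the latter being precisely the hypothesis of part~(ii) (which in particular forces $a(2p-a-2)\ge 1$, so $p^2-a(2p-a-2)+1$ again lies in the admissible range $[2,p^2]$). Since $st-a(s+t-a-2)+1=p^2-a(2p-a-2)+1$, the theorem immediately yields $r(K_{n,n},K_{p,p},p^2-a(2p-a-2)+1)=\Omega(n^{1/a})$.

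The nearest thing to an obstacle is simply verifying that these substitutions genuinely meet all constraints of the parent theorems, including the implicit requirement $2\le q\le e(K_{p,p})=p^2$ on the number of colors $q$; both checks are routine. Alternatively one could rerun the proofs of Theorems~\ref{th:Kab*} and~\ref{th:induction} from scratch with $s=t=p$ (using $\mathrm{ex}(n,n,K_{a,b})=O(n^{2-1/a})$ for part~(i) and the Corr\'{a}di-type/inductive argument for part~(ii)), but quoting the already-established unbalanced statements is cleaner and shorter.
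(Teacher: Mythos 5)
Your proposal is correct and matches the paper exactly: the paper presents Corollary~\ref{co:Kab} as an immediate specialization of Theorems~\ref{th:Kab*} and~\ref{th:induction} to $s=t=p$, with $\min\{a,b\}=a$ in part (i) coming from the assumption $a\le b$. The routine parameter checks you carry out are all that is needed.
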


The remainder of this paper is organized as follows. In the next section, we provide some additional terminology and results that will be used in our proofs. There we also introduce the Color Energy Method and present our extension to bipartite graphs, together with our proof of Theorem~\ref{th:Kt+}. In Section~\ref{sec:PfC_p}, we provide our proof of Theorem~\ref{th:C_p}. In Section~\ref{sec:adv}, we further develop the technique used in Sections~\ref{sec:pre} and \ref{sec:PfC_p}, and prove Theorems~\ref{th:r=2theta} and \ref{th:r3theta} using this advanced technique. Section~\ref{sec:unbalanced} is devoted to our proofs of Theorems~\ref{th:UnbalanceLin}, \ref{th:UnbalanceQu3+}, \ref{th:UnbalanceQu1} and \ref{th:UnbalanceQu2}. In Section~\ref{sec:unbalanced2}, we give two proofs of Theorem~\ref{th:Ksst}, one using the Color Energy Method and the other one without using the Color Energy Method, and we compare the two proofs. In Section~\ref{sec:not-energy}, we prove Theorems~\ref{th:2p-2}, \ref{th:Kab*} and \ref{th:induction}. In Section~\ref{sec:concluding}, we provide some open problems, and give two explanations to illustrate the differences between studying $f(n, p, q)$ and $r(K_{n,n}, K_{s,t}, q)$ using the Color Energy Method. In Appendix~\ref{ap:1}, we prove Lemma~\ref{le:UnbalanceQu3+}, which will be used in our proof of Theorem~\ref{th:UnbalanceQu3+}. In Appendices~\ref{ap:O1} and \ref{ap:On}, we provide some results on the thresholds for $r(K_{n,n}, K_{s,t}, q)=n^2-c$ and $r(K_{n,n}, K_{s,t}, q)=n^2-O(n)$, respectively.


\section{Preliminaries}\label{sec:pre}

We begin with some additional terminology and notation. For a positive integer $n$, let $[n]\colonequals \{1, 2, \ldots, n\}$. Given a graph $G$ and an integer $k \geq 1$, let $c\colon\, E(G)\rightarrow [k]$ be a $k$-edge-coloring (not necessarily a proper edge-coloring) of $G$. For an edge $e\in E(G)$, let  $c(e)$ be the color assigned to edge $e$. We denote by $C(G)$ the set of all colors used on the edges of $G$, i.e., $C(G)=\{c(e)\colon\, e\in E(G)\}$. For two disjoint nonempty subsets $U$, $V\subset V(G)$, let $C(U, V)=\{c(uv)\colon\, uv\in E(G), u\in U, v\in V\}$. If $U$ consists of a single vertex $u$, then we simply write $C(\{u\}, V)$ as $C(u, V)$. For any vertex $v\in V(G)$ and color $i\in [k]$, we say that \emph{$v$ is incident with color $i$} if there exists an edge $e$ with $c(e)=i$ such that $v$ is incident with $e$. For a subset $U\subseteq V(G)$, let $G[U]$ denote the subgraph of $G$ induced by $U$. For a color $i\in [k]$, the subgraph \emph{induced by color $i$} is the subgraph consisting of all the edges with color $i$ and all the vertices that are incident with color $i$. Given a bipartite graph $G$ with partite sets $A$ and $B$, we also use $G(A,B)$ to denote this bipartite graph.

In order to study lower bounds on $r(G, H, q)$, it is convenient to consider the concept of \emph{color repetition}. Given an edge-colored graph $F$, the number of color repetitions in $F$ is defined to be $|E(F)|-|C(F)|$. In other words, the statement that $F$ has at least $x$ color repetitions is equivalent to the statement that $F$ is colored by at most $|E(F)|-x$ distinct colors.

For any graph $H$ and positive integer $n$, the \emph{Tur\'{a}n number} of $H$, denoted by ex$(n, H)$, is the maximum number of edges in an $n$-vertex $H$-free graph. For any bipartite graph $H$ and positive integers $m, n$, the \emph{bipartite Tur\'{a}n number} of $H$, denoted by ex$(m, n, H)$, is the maximum number of edges in an $H$-free graph $G$, where $G$ is a spanning subgraph of $K_{m,n}$. Note that ex$(m, n, H)\leq $ ex$(m+n, H)$ for any bipartite graph $H$. For any positive integers $m, n, a$ and $b$, the \emph{Zarankiewicz function} z$(m, n; a, b)$ is defined to be the maximum number of edges in a spanning subgraph $G$ of $K_{m,n}$ such that $G$ contains no $K_{a,b}$ with $a$ vertices in the partite set of size $m$ and $b$ vertices in the partite set of size $n$. For any integer $t\geq 3$, let $K^{1}_{t}$ be the \emph{subdivision} of $K_t$, i.e., the graph obtained from $K_t$ by replacing each edge with a path of length 2. For integers $a, b\geq 2$, the \emph{theta graph} $\mit\Theta$$(a,b)$ consists of two vertices connected by $b$ internally disjoint paths of length $a$. We shall use the following results.

\begin{theorem}\label{th:Tu} The following results have been established.
\begin{itemize}
\item[{\rm (i)}]{\normalfont (\cite{Jan})} For any integer $t\geq 3$, we have {\rm ex}$(n, K^{1}_{t})=O\left(n^{3/2-1/(4t-6)}\right)$.
\item[{\rm (ii)}]{\normalfont (\cite{BuTa,FaSi})} For integers $a, b\geq 2$, we have {\rm ex}$(n, \mit\Theta$$(a, b))=O\left(n^{1+1/a}\right)$.
\item[{\rm (iii)}]{\normalfont (\cite{NaVe})} For any integer $k\geq 2$, we have {\rm ex}$(n, n, C_{2k})=O\left(n^{1+1/k}\right)$.
\item[{\rm (iv)}]{\normalfont (\cite{KoST}, see also \cite[Section~IV.2]{Bol})} For positive integers $m, n, a$ and $b$, we have $\mbox{z}(m, n; a, b)\leq (b-1)^{1/a}(m-a+1)n^{1-1/a}+(a-1)n$.
\end{itemize}
\end{theorem}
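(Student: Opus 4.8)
The statement to prove is the Zarankiewicz bound
$$\mathrm{z}(m, n; a, b)\leq (b-1)^{1/a}(m-a+1)n^{1-1/a}+(a-1)n.$$
The plan is the standard double-counting argument on $a$-subsets. Let $G$ be a spanning subgraph of $K_{m,n}$ with partite sets $X$ (of size $m$) and $Y$ (of size $n$), containing no $K_{a,b}$ with the $a$-side in $X$ and the $b$-side in $Y$. First I would count, in two ways, the number $N$ of pairs $(S, y)$ where $S\subseteq X$ with $|S|=a$ and $y\in Y$ is a common neighbour of all vertices of $S$. On one hand, since $G$ is $K_{a,b}$-free in the prescribed orientation, each fixed $S$ has at most $b-1$ such common neighbours $y$, so $N\leq (b-1)\binom{m}{a}$. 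On the other hand, grouping by $y$, each $y\in Y$ with degree $d_y$ contributes exactly $\binom{d_y}{a}$ such sets $S$, so $N=\sum_{y\in Y}\binom{d_y}{a}$.

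Combining the two, $\sum_{y\in Y}\binom{d_y}{a}\leq (b-1)\binom{m}{a}$. The next step is convexity: since $x\mapsto\binom{x}{a}$ is convex, Jensen's inequality gives $\frac1n\sum_{y}\binom{d_y}{a}\geq\binom{\bar d}{a}$ where $\bar d=\frac1n\sum_y d_y=e(G)/n$ is the average degree (one must be a little careful since $\binom{x}{a}$ is defined here via the polynomial $x(x-1)\cdots(x-a+1)/a!$, which is convex and increasing for $x\geq a-1$; the small-degree vertices contribute nonnegatively, so the inequality still goes the right way). Hence $n\binom{\bar d}{a}\leq (b-1)\binom{m}{a}$, i.e. $\bar d(\bar d-1)\cdots(\bar d-a+1)\leq (b-1)\frac{m(m-1)\cdots(m-a+1)}{n}$.

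The final step is to extract a clean bound on $e(G)=n\bar d$ from this polynomial inequality. Using $\bar d-i\geq \bar d-a+1$ is too weak; instead one uses the telescoping trick: write $\bar d(\bar d-1)\cdots(\bar d-a+1)\geq (\bar d-a+1)^a$ is again too lossy, so one argues more carefully that if $\bar d\geq a-1$ then $(\bar d-a+1)^a\leq b-1)\frac{\prod_{i=0}^{a-1}(m-i)}{n}\le (b-1)\frac{m^a}{n}$ — wait, we want $m-a+1$, not $m$, in the final bound, so the cleaner route is: bound $\prod_{i=0}^{a-1}(m-i)\le (m-a+1)\cdot m^{a-1}$? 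That still is not it. The correct elementary manipulation is: from $\binom{\bar d}{a}\le \frac{b-1}{n}\binom{m}{a}$ one deduces $\bar d\le a-1+\big((b-1)/n\big)^{1/a}(m-a+1)^{?}$... The standard resolution (as in Bollobás, \S IV.2) is to split into the case $\bar d<a-1$, which gives $e(G)<(a-1)n$ directly, and the case $\bar d\ge a-1$, where one uses $\binom{\bar d}{a}\ge \frac{(\bar d-a+1)^a}{a!}$ is avoided in favour of a direct comparison term-by-term of $\bar d/m,(\bar d-1)/(m-1),\dots$; concretely, if $\bar d-a+1\ge 0$ one shows $\big(\frac{\bar d-a+1}{1}\big)\le$ can be bounded so that $\bar d\le (b-1)^{1/a}(m-a+1)n^{-1/a}+(a-1)$, and multiplying by $n$ gives the claim. \textbf{The main obstacle is exactly this last bookkeeping step}: getting the factor $(m-a+1)$ (rather than $m$) and the additive $(a-1)n$ term out of the falling-factorial inequality cleanly, which requires the case split on whether the average degree exceeds $a-1$ and a careful term-by-term comparison of the two falling factorials rather than a crude bound. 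Everything else — the double count and the convexity step — is routine.
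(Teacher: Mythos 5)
Theorem~\ref{th:Tu} is not proved in the paper at all: all four items are quoted results, with (iv) attributed to K\"{o}vari, S\'{o}s and Tur\'{a}n and to Section~IV.2 of Bollob\'{a}s's book. Your proposal addresses only item (iv), and there it follows exactly the standard argument of the cited source: double-count pairs $(S,y)$ with $S$ an $a$-subset of the size-$m$ side and $y$ a common neighbour, obtaining $\sum_{y}\binom{d_y}{a}\le (b-1)\binom{m}{a}$, then apply convexity to get $n\binom{\bar d}{a}\le (b-1)\binom{m}{a}$ with $\bar d=e(G)/n$. The one step you leave unfinished --- extracting the factor $m-a+1$ rather than $m$ --- closes routinely: for $0\le i\le a-1$ and $a-1\le x\le m$ one has $\frac{x-i}{m-i}\ge \frac{x-a+1}{m-a+1}$ (clearing denominators, each such inequality reduces to $x\le m$), so multiplying over $i$ gives $\bigl(\frac{\bar d-a+1}{m-a+1}\bigr)^{a}\le \binom{\bar d}{a}\big/\binom{m}{a}\le \frac{b-1}{n}$ whenever $\bar d\ge a-1$, hence $\bar d\le (b-1)^{1/a}(m-a+1)n^{-1/a}+(a-1)$ and $e(G)\le (b-1)^{1/a}(m-a+1)n^{1-1/a}+(a-1)n$; if instead $\bar d<a-1$ then $e(G)<(a-1)n$ outright. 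So your diagnosis of the ``main obstacle'' is accurate and the gap is easy to fill; your caveat about convexity for small degrees is also handled correctly by taking the convex extension that vanishes below $a-1$. Items (i)--(iii) (Janzer's subdivision bound, the Bukh--Tait/Faudree--Simonovits theta-graph bound, and the Naor--Verstra\"{e}te even-cycle bound) are substantial theorems that neither you nor the paper proves; they are simply cited, so no comparison is possible there.
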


We will also use the following two known combinatorial lemmas which can also be found in \cite[Section~2.1]{Juk}.

\begin{lemma}\label{le:Erd64}{\normalfont (\cite{Erd64})} Let $A$ be a set of $n$ elements and let $t\geq 2$ be an integer. Let $A_1, A_2, \ldots, A_k$ be subsets of $A$ of average size at least $m$. If $k\geq 2t(n/m)^t$, then there exist $1\leq j_1<j_2<\cdots <j_t\leq k$ such that $|A_{j_1}\cap A_{j_2}\cap \cdots \cap A_{j_t}|\geq m^t/(2n^{t-1})$.
\end{lemma}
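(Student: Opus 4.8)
The plan is to use a double-counting argument on incidences together with convexity, which is the standard way to extract a large common intersection from many sets. First I would consider the bipartite incidence structure between the ground set $A$ and the index set $[k]$, where element $a\in A$ is "incident" to index $j$ if $a\in A_j$. Since the $A_j$ have average size at least $m$, the total number of incidences is $\sum_{j=1}^k |A_j|\geq km$. Hence, writing $d_a$ for the number of sets $A_j$ containing a fixed element $a\in A$, we have $\sum_{a\in A} d_a \geq km$, so the average degree over the $n$ elements of $A$ is at least $km/n$.

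The next step is to count ordered $t$-tuples of distinct indices $(j_1,\dots,j_t)$ together with a common element. The number of pairs consisting of an element $a$ and an ordered $t$-tuple of distinct sets all containing $a$ equals $\sum_{a\in A} d_a(d_a-1)\cdots(d_a-t+1)$. I would lower-bound this using convexity of the falling factorial $x\mapsto x(x-1)\cdots(x-t+1)$ (Jensen's inequality), which gives roughly $n\cdot (km/n)(km/n-1)\cdots(km/n-t+1)$; the hypothesis $k\geq 2t(n/m)^t$ ensures $km/n$ is comfortably larger than $t$, so this falling factorial is at least a constant fraction (say at least $\tfrac12$, using the $2t$ factor) of $(km/n)^t$, yielding a lower bound of order $n(km/n)^t = k^t m^t/n^{t-1}$ on the number of such (element, $t$-tuple) pairs. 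On the other hand, the number of ordered $t$-tuples of distinct indices is at most $k^t$. By averaging (pigeonhole), some ordered $t$-tuple $(j_1,\dots,j_t)$ of distinct indices is paired with at least $(k^t m^t/n^{t-1})/k^t = m^t/n^{t-1}$ elements; after sorting the indices into increasing order $j_1<\cdots<j_t$ and absorbing the resulting $t!$ (or just the factor needed), one gets $|A_{j_1}\cap\cdots\cap A_{j_t}|\geq m^t/(2n^{t-1})$, as required.

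The only place that needs care — and the main "obstacle," though it is really just bookkeeping — is making the constants line up: one must verify that the condition $k\geq 2t(n/m)^t$ is exactly strong enough to (a) make $km/n\geq t$ so that the falling factorial is positive and well-behaved, and (b) convert $(km/n)^t$ down to the clean bound $m^t/(2n^{t-1})$ after dividing by $k^t$ and accounting for the ordered-versus-unordered factor. I would handle this by first noting $k\geq 2t(n/m)^t$ forces $km/n\geq 2t\cdot(n/m)^{t-1}\geq 2t$ (since $n\geq m$), so each factor $km/n-i\geq \tfrac12\cdot km/n$ for $0\leq i<t$, whence the falling factorial is at least $2^{-t}(km/n)^t$; combined with the slack coming from $k$ itself being large, the final division yields the stated inequality. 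This matches the classical argument of Erd\H{o}s, and no new idea beyond convexity and double counting is needed.
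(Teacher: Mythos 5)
The paper does not prove this lemma: it is quoted from Erd\H{o}s (1964) with a pointer to Jukna's book, so the comparison below is with the classical argument rather than with a proof in the paper. Your overall strategy --- double counting incidences between elements and $t$-tuples of indices, Jensen's inequality for the (falling-factorial) convex function, and pigeonhole over the $t$-tuples --- is exactly the standard route, and every step up to the final estimate is fine. The problem is the last estimate. Bounding each factor by $km/n-i\geq\tfrac12\cdot km/n$ and the number of tuples by $k^t$ gives only $|A_{j_1}\cap\cdots\cap A_{j_t}|\geq m^t/(2^t n^{t-1})$, and your proposed rescue --- that ``the slack coming from $k$ itself being large'' recovers the missing $2^{t-1}$ --- is quantitatively false. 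That slack is the ratio $k^t/\bigl(k(k-1)\cdots(k-t+1)\bigr)=\prod_{i=1}^{t-1}(1-i/k)^{-1}$, which under the hypothesis $k\geq 2t(n/m)^t$ is a bounded quantity nowhere near $2^{t-1}$ (e.g.\ for $t=5$, $k=10$ it is about $3.3$ versus the needed $16$, and it only shrinks toward $1$ as $k$ grows). Note also that the crude per-factor bound genuinely loses this much in the extreme regime $m$ close to $n$: for $\bar d=2t$ the falling factorial $\bar d(\bar d-1)\cdots(\bar d-t+1)$ really is smaller than $\tfrac12\bar d^{\,t}$ once $t\geq 5$. So as written the argument proves a weaker statement than the lemma.

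The missing idea is that the two falling factorials must be compared factor by factor rather than separately. Writing $w=n/m\geq 1$ and using $\bar d\geq k/w$, the pigeonhole bound is
\[
\frac{n\prod_{i=0}^{t-1}(\bar d-i)}{\prod_{i=0}^{t-1}(k-i)}\;\geq\;\frac{n}{w^t}\prod_{i=0}^{t-1}\frac{k-iw}{k-i}
=\frac{m^t}{n^{t-1}}\prod_{i=0}^{t-1}\Bigl(1-\frac{i(w-1)}{k-i}\Bigr),
\]
and the Weierstrass inequality $\prod(1-x_i)\geq 1-\sum x_i$ reduces the task to checking $(w-1)t(t-1)\leq k-t+1$, which follows from $k\geq 2tw^t$ via Bernoulli's inequality $w^t\geq 1+t(w-1)$. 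The point is that the multiplicative loss is proportional to $w-1$, so it is small precisely in the regime ($m$ near $n$) where your per-factor bound is most lossy. With this replacement (and the routine remark that $\binom{x}{t}$ is extended by $0$ below $t-1$ so Jensen applies even when some $d_a<t$), your outline becomes the standard complete proof; also note that passing between ordered and unordered $t$-tuples costs nothing, since the factor $t!$ cancels on both sides.
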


\begin{lemma}\label{le:Corradi}{\normalfont (Corr\'{a}di's Lemma \cite{Cor})} Let $X_1, X_2, \ldots, X_m$ be $m$ sets with $|X_i|\geq a$ for all $i\in [m]$. If $|X_i\cap X_j|\leq \ell$ for all $i\neq j$, then $|X_1\cup X_2\cup \cdots \cup X_m|\geq a^2m/(a+ (m- 1)\ell)$.
\end{lemma}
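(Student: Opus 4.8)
The plan is to use the classical double-counting-plus-Cauchy--Schwarz argument. Write $U = X_1 \cup \cdots \cup X_m$ and $n = |U|$; the goal is a lower bound on $n$. For each element $x \in U$, let $d(x)$ denote the number of indices $i \in [m]$ with $x \in X_i$.

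First I would record the two incidence counts. Counting pairs $(x,i)$ with $x \in X_i$ in two ways gives $S := \sum_{x \in U} d(x) = \sum_{i=1}^m |X_i| \ge am$. Counting, for each $x$, the ordered pairs of distinct sets both containing $x$, and using the hypothesis $|X_i \cap X_j| \le \ell$, gives $\sum_{x \in U} d(x)(d(x)-1) = 2\sum_{i<j}|X_i \cap X_j| \le m(m-1)\ell$, hence $\sum_{x \in U} d(x)^2 \le S + m(m-1)\ell$.

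The second step is the inequality $S^2 = \big(\sum_{x \in U} d(x)\big)^2 \le n \sum_{x \in U} d(x)^2 \le n\big(S + m(m-1)\ell\big)$, which is Cauchy--Schwarz (equivalently, the power-mean inequality applied to the values $d(x)$). Rearranging yields $n \ge S^2/\big(S + m(m-1)\ell\big)$. Finally I would observe that $\varphi(S) = S^2/(S+c)$ with $c = m(m-1)\ell \ge 0$ is nondecreasing for $S \ge 0$, so substituting the bound $S \ge am$ gives $n \ge (am)^2/\big(am + m(m-1)\ell\big) = a^2m/\big(a + (m-1)\ell\big)$, as claimed.

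There is no genuine obstacle here: the only point that warrants an explicit line is the monotonicity of $\varphi$, which is exactly what licenses replacing $S$ by $am$ in the last step without weakening the conclusion. If one prefers to avoid even that, the same conclusion follows by running the argument with $\sum_{x \in U} \binom{d(x)}{2}$ and applying Jensen's inequality to the convex function $\binom{\cdot}{2}$.
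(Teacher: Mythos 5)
Your proof is correct. The paper cites Corr\'{a}di's Lemma without proof, but your argument (double counting incidences, bounding $\sum_x d(x)^2$ via the pairwise intersection hypothesis, then Cauchy--Schwarz) is exactly the $r=2$ case of the paper's proof of its generalization, Lemma~\ref{le:gCorradi}; the only cosmetic difference is that the paper first truncates each $X_i$ to a subset $Y_i$ of size exactly $a$, which lets it skip the monotonicity observation about $S\mapsto S^2/(S+c)$ that you correctly supply instead.
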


In some of our proofs, we will apply the following generalization of Corr\'{a}di's Lemma.

\begin{lemma}\label{le:gCorradi} Let $2\leq r\leq m$, and $X_1, X_2, \ldots, X_m$ be $m$ sets with $|X_i|\geq a$ for all $i\in [m]$. If $|X_{j_1}\cap \cdots \cap X_{j_r}|\leq \ell$ for all $1\leq j_1< \cdots < j_r\leq m$, then $|X_1\cup X_2\cup \cdots \cup X_m|\geq \Big(a^rm^{r-1}/\Big(a\Big(m^{r-1}-\frac{(m-1)!}{(m-r)!}\Big)+\frac{(m-1)!}{(m-r)!}\ell\Big)\Big)^{1/(r-1)}$.
\end{lemma}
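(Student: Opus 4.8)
The plan is to imitate the standard double-counting proof of Corr\'adi's Lemma (Lemma~\ref{le:Corradi}), but replacing pairwise intersections by $r$-wise intersections. Write $N = |X_1 \cup \cdots \cup X_m|$ and for each element $x$ in the union let $d(x)$ denote its \emph{degree}, i.e.\ the number of indices $i \in [m]$ with $x \in X_i$. The first step is the easy lower bound on the sum of degrees: since $|X_i| \ge a$ for all $i$, we have
\[
\sum_{x} d(x) = \sum_{i=1}^{m} |X_i| \ge am.
\]
The second step is to count, in two ways, the number of pairs $(x, \{j_1 < \cdots < j_r\})$ such that $x \in X_{j_1} \cap \cdots \cap X_{j_r}$. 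On one hand this count equals $\sum_x \binom{d(x)}{r}$; on the other hand, since each $r$-wise intersection has size at most $\ell$, it is at most $\binom{m}{r}\ell$. So $\sum_x \binom{d(x)}{r} \le \binom{m}{r}\ell$.

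The third step is to combine these two inequalities via convexity. Because $\binom{y}{r}$ is a convex function of $y$ (for $y \ge r-1$, after the usual polynomial extension), Jensen's inequality applied to the $N$ values $d(x)$ gives
\[
\sum_{x}\binom{d(x)}{r} \;\ge\; N\binom{\bar d}{r},
\qquad \bar d \colonequals \frac{1}{N}\sum_x d(x) \ge \frac{am}{N}.
\]
Rather than carry the exact binomial, I expect it is cleaner to use the elementary bound $\binom{y}{r} \ge \frac{(y-r+1)^r}{r!}$, or equivalently to work directly with the quantity $\sum_x d(x)(d(x)-1)\cdots(d(x)-r+1)$, which by convexity of the falling factorial is at least $N \cdot \bar d(\bar d - 1)\cdots(\bar d - r+1)$ and at most $\frac{(m-1)!}{(m-r)!}\,\ell \cdot$ (a constant) — matching the shape of the claimed bound, where the factor $\frac{(m-1)!}{(m-r)!}$ and the term $m^{r-1}$ are exactly what one gets from expanding $m(m-1)\cdots(m-r+1)$ and from the falling factorial of $\bar d$. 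Plugging $\bar d \ge am/N$ into the resulting inequality yields a polynomial inequality in $N$ of degree $r-1$ (after clearing the common factor of $N$), namely something of the form $a^r m^{r-1} \le N^{r-1}\big(a(m^{r-1} - \tfrac{(m-1)!}{(m-r)!}) + \tfrac{(m-1)!}{(m-r)!}\ell\big)$; solving for $N$ and taking $(r-1)$-th roots gives precisely the stated bound.

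The main obstacle is bookkeeping rather than conceptual: one has to pick the right elementary substitute for $\binom{d(x)}{r}$ so that the constants line up \emph{exactly} with the formula in the statement (in particular producing the asymmetric-looking term $a(m^{r-1} - \frac{(m-1)!}{(m-r)!}) + \frac{(m-1)!}{(m-r)!}\ell$ rather than a messier expression), and to check that the convexity step is legitimate — i.e.\ that we may assume every $d(x) \ge 1$ (trivial) and handle the degenerate cases where some $d(x) < r-1$, for which $\binom{d(x)}{r} = 0$ and the falling-factorial extension is non-positive; one verifies that throwing away those terms only strengthens the upper bound $\binom{m}{r}\ell$ on $\sum_x\binom{d(x)}{r}$, so convexity can still be applied to the full average $\bar d$ using $\binom{y}{r} \ge \frac{1}{r!}\prod_{i=0}^{r-1}(y-i)$ valid for all real $y$ when the right side is interpreted with its sign. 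For $r=2$ this recovers Lemma~\ref{le:Corradi} exactly, which is a useful sanity check on the constants.
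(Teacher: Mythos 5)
Your approach is sound and genuinely different from the paper's. The paper first replaces each $X_i$ by a subset $Y_i$ of size \emph{exactly} $a$, proves by induction the identity $\sum_{x}d^t(x)=\sum_{(j_1,\ldots,j_t)\in[m]^t}|Y_{j_1}\cap\cdots\cap Y_{j_t}\cap Y'|$ over \emph{ordered} tuples with repetition allowed, bounds the contribution of the non-distinct tuples by $a\big(m^r-\frac{m!}{(m-r)!}\big)$ (this is precisely why the truncation to $|Y_i|=a$ is needed), and finishes with Jensen's inequality in the form $\sum_x d^r(x)\ge\big(\sum_x d(x)\big)^r/|Y|^{r-1}$. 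You instead count only unordered $r$-sets of distinct indices via $\sum_x\binom{d(x)}{r}\le\binom{m}{r}\ell$ and invoke convexity of the extended binomial coefficient. This buys you freedom from the truncation step, since there are no diagonal terms to control and $\sum_x d(x)=\sum_i|X_i|\ge am$ is already in the right direction; the price is a more delicate convexity statement and a less direct path to the stated formula.

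Two points in your sketch need to be made precise before the constants really ``line up exactly,'' and they are more than bookkeeping. First, setting $D\colonequals am/N$, your convexity step yields $am\prod_{i=1}^{r-1}(D-i)\le\frac{m!}{(m-r)!}\ell$, whereas the claimed bound is equivalent (after multiplying by $m$) to $amD^{r-1}\le a\big(m^r-\frac{m!}{(m-r)!}\big)+\frac{m!}{(m-r)!}\ell$. To pass from the former to the latter you must show $D^{r-1}-\prod_{i=1}^{r-1}(D-i)\le m^{r-1}-\prod_{i=1}^{r-1}(m-i)$, which follows from $r-1\le D\le m$ (note $N\ge a$ gives $D\le m$) together with the monotonicity of $D\mapsto D^{r-1}-\prod_{i=1}^{r-1}(D-i)$ on $[r-1,\infty)$; this is an easy but genuine extra lemma. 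Second, the degenerate case $D<r-1$ must be handled separately, since there the convexity chain gives nothing; one checks that $N>am/(r-1)$ already exceeds the claimed bound, using $m^{r-1}-\prod_{i=1}^{r-1}(m-i)\ge(r-1)^{r-1}$ for $m\ge r$. With these two additions your argument closes, and in fact your intermediate falling-factorial inequality is slightly stronger than the paper's; as you note, at $r=2$ both routes collapse to Corr\'adi's Lemma.
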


\begin{proof} For each $i\in [m]$, let $Y_i\subseteq X_i$ with $|Y_i|=a$. Note that $|Y_{j_1}\cap \cdots \cap Y_{j_r}|\leq |X_{j_1}\cap \cdots \cap X_{j_r}|\leq \ell$ for all $1\leq j_1< \cdots < j_r\leq m$. Let $Y=Y_1\cup Y_2\cup \cdots \cup Y_m$. For any $x\in Y$, let $d(x)=|\{Y_j\colon\, x\in Y_j, j\in [m]\}|$. We first prove the following claim.

\begin{claim}\label{cl:gCorradi-1} For any $Y'\subseteq Y$ and $1\leq t \leq m$, we have $\sum_{x\in Y'}(d(x))^{t}=\sum_{(j_1, \ldots, j_{t})\in [m]^{t}}|Y_{j_1}\cap \cdots \cap Y_{j_{t}}\cap Y'|$.
\end{claim}

\begin{proof} We will apply double counting on the number of edges of an auxiliary graph $H$ defined as follows. Let $H$ be the bipartite graph with bipartition $Y'$ and $\mathcal{Y}=\{Y_{j_1}\cap \cdots \cap Y_{j_{t}} \colon\, (j_1, \ldots, j_{t})\in [m]^{t}\}$ such that $x\in Y'$ and $Y_{j_1}\cap \cdots \cap Y_{j_{t}}\in \mathcal{Y}$ (note that $Y_{j_1}\cap \cdots \cap Y_{j_{t}}$ is a vertex of $H$) are adjacent in $H$ if and only if $x\in Y_{j_1}\cap \cdots \cap Y_{j_{t}}$. Note that for any $x\in Y'$, the degree of $x$ in $H$ is $(d(x))^{t}$. Thus $|E(H)|=\sum_{x\in Y'}(d(x))^{t}$. On the other hand, for any $Y_{j_1}\cap \cdots \cap Y_{j_{t}}\in \mathcal{Y}$, its degree in $H$ is $|Y_{j_1}\cap \cdots \cap Y_{j_{t}}\cap Y'|$. Thus $|E(H)|=\sum_{(j_1, \ldots, j_{t})\in [m]^{t}}|Y_{j_1}\cap \cdots \cap Y_{j_{t}}\cap Y'|$. So $\sum_{x\in Y'}(d(x))^{t}=\sum_{(j_1, \ldots, j_{t})\in [m]^{t}}|Y_{j_1}\cap \cdots \cap Y_{j_{t}}\cap Y'|$.
\end{proof}

By Claim \ref{cl:gCorradi-1} and since $|Y_{j_1}\cap \cdots \cap Y_{j_r}|\leq \ell$ for all $1\leq j_1< \cdots < j_r\leq m$, we have
\begin{align}\label{al:gCorradi-1}
   \sum_{x\in Y}(d(x))^r=&~\sum_{(j_1, \ldots, j_{r})\in [m]^{r}}|Y_{j_1}\cap \cdots \cap Y_{j_{r}}|\nonumber\\
  = &~\sum_{\substack{(j_1, \ldots, j_{r})\in [m]^{r},  \\ j_1, \ldots, j_{r}~\text{are not pairwise distinct}}}|Y_{j_1}\cap \cdots \cap Y_{j_{r}}| + \sum_{\substack{(j_1, \ldots, j_{r})\in [m]^{r},  \\ j_1, \ldots, j_{r}~\text{are pairwise distinct}}}|Y_{j_1}\cap \cdots \cap Y_{j_{r}}|\nonumber\\
  \leq &~ a\left(m^r-\frac{m!}{(m-r)!}\right)+\frac{m!}{(m-r)!}\ell.
\end{align}
Using Jensen's inequality, we have
\begin{align}\label{al:gCorradi-2}
  \sum_{x\in Y}(d(x))^r\geq &~\frac{1}{|Y|^{r-1}}\Bigg(\sum_{x\in Y}d(x)\Bigg)^r= \frac{1}{|Y|^{r-1}}\Bigg(\sum_{j\in [m]}|Y_j|\Bigg)^r= \frac{(am)^r}{|Y|^{r-1}}.
\end{align}
Combining inequalities (\ref{al:gCorradi-1}) and (\ref{al:gCorradi-2}), we have $|X_1\cup X_2\cup \cdots \cup X_m|\geq |Y|\geq \Big((am)^r/\Big(a\Big(m^r-\frac{m!}{(m-r)!}\Big)+\frac{m!}{(m-r)!}\ell\Big)\Big)^{1/(r-1)}=\Big(a^rm^{r-1}/\Big(a\Big(m^{r-1}- \frac{(m-1)!}{(m-r)!}\Big)+\frac{(m-1)!}{(m-r)!}\ell\Big)\Big)^{1/(r-1)}$.
\end{proof}

To prove Theorem~\ref{th:UnbalanceQu3+}, we will use the following lemma, which is proved in Appendix \ref{ap:1}.

\begin{lemma}\label{le:UnbalanceQu3+} Let $s, t$ be integers with $s\geq 3$, $t\geq 3s-2$ and $(s,t)\neq (3,7)$. If $\big\lfloor\frac{s+t}{2}\big\rfloor-s+1$ is even, then $\frac{3}{2}\big(\big\lfloor\frac{s+t}{2}\big\rfloor-s+1\big)+s\leq t$. If $\big\lfloor\frac{s+t}{2}\big\rfloor-s+1$ is odd, then $\frac{3}{2}\big(\big\lfloor\frac{s+t}{2}\big\rfloor-s+2\big)+s-1\leq t$.
\end{lemma}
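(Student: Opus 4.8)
The plan is to eliminate the floor function and reduce the statement to elementary linear inequalities in the single variable $d:=t-s$. First I would translate the hypotheses: from $t\ge 3s-2$ we get $d\ge 2(s-1)\ge 4$ since $s\ge 3$; moreover $d=4$ forces $2(s-1)\le 4$, hence $s=3$ and $t=7$, the excluded pair. So throughout we may assume $d\ge 4$ and $d\ne 4$; in particular $d\ge 6$ whenever $s\ge 4$, and within any fixed residue class of $d$ modulo $4$ the smallest admissible value of $d$ is easy to read off.

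Next I would make $k:=\lfloor\frac{s+t}{2}\rfloor-s+1$ explicit by splitting on the parity of $s+t$. If $s+t$ is even then $\lfloor\frac{s+t}{2}\rfloor=\frac{s+t}{2}$ and $k=\frac{d}{2}+1$; if $s+t$ is odd then $\lfloor\frac{s+t}{2}\rfloor=\frac{s+t-1}{2}$ and $k=\frac{d+1}{2}$. Using $t=s+d$, the first claimed inequality (the case ``$k$ even'') becomes $\frac32 k\le d$, while the second (the case ``$k$ odd'', in which $k+1$ is even so that $\frac32(k+1)$ is an integer) becomes $\frac32(k+1)\le d+1$.

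Then I would run through the four residue classes of $d$ modulo $4$, each of which simultaneously fixes the parity of $s+t$ and of $k$. If $d\equiv 2\pmod 4$, then $s+t$ is even, $k=\frac d2+1$ is even, and $\frac32 k\le d$ reduces to $d\ge 6$, which holds since $d\ge 4$ in this class forces $d\ge 6$. If $d\equiv 0\pmod 4$, then $s+t$ is even, $k=\frac d2+1$ is odd, and $\frac32(k+1)\le d+1$ reduces to $d\ge 8$; since $d\equiv 0\pmod 4$, $d\ge 4$ and $d\ne 4$, indeed $d\ge 8$. If $d\equiv 3\pmod 4$, then $s+t$ is odd, $k=\frac{d+1}{2}$ is even, and $\frac32 k\le d$ reduces to $d\ge 3$, which holds since $d\ge 4$. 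Finally, if $d\equiv 1\pmod 4$, then $s+t$ is odd, $k=\frac{d+1}{2}$ is odd, and $\frac32(k+1)\le d+1$ reduces to $d\ge 5$, which holds because $d$ is odd and $d\ge 4$.

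The lemma is thus almost entirely bookkeeping, and I expect the only genuinely delicate point to be the case $d\equiv 0\pmod 4$: there the required bound $d\ge 8$ fails precisely at $d=4$, and $d=4$ is exactly the excluded pair $(s,t)=(3,7)$, so the exclusion in the statement is both necessary and sufficient for this argument. The one mild pitfall is keeping the floor correct across the even/odd split of $s+t$; once $k$ is expressed in terms of $d$, each of the four subcases is a single linear inequality.
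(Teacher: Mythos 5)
Your proof is correct. The substitution $d=t-s$ is sound: $t\ge 3s-2$ gives $d\ge 2(s-1)\ge 4$, and $d=4$ forces $(s,t)=(3,7)$, so $d\ge 5$ throughout (and $d\ge 6$ for $s\ge 4$). Writing $k=\lfloor\frac{s+t}{2}\rfloor-s+1$ as $\frac{d}{2}+1$ or $\frac{d+1}{2}$ according to the parity of $d$, the two target inequalities become $\frac32 k\le d$ and $\frac32(k+1)\le d+1$, and your four residue classes of $d$ modulo $4$ reduce them to $d\ge 6$, $d\ge 8$, $d\ge 3$ and $d\ge 5$ respectively, each of which follows from the constraints on $d$ in that class. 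I checked all four reductions and they are exact.

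Your route differs from the paper's in a useful way. The paper splits into four cases according to the parities of $s$ and $t$ separately, then within each case bounds $s$ by $(t+2)/3$ or $(t+1)/3$ and verifies an inequality of the form $(10t+c)/12\le t$; this leaves a handful of small exceptional pairs such as $(3,9)$, $(3,11)$ and $(4,12)$ that must be checked by hand. Your reparametrization by $d=t-s$ collapses the problem to a single variable, so that each case is a sharp linear inequality in $d$ with no residual exceptions; in particular the excluded pair $(3,7)$ appears exactly as the failure point $d=4$ in the class $d\equiv 0\pmod 4$, which makes the role of the hypothesis $(s,t)\neq(3,7)$ transparent rather than incidental. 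The paper's version keeps $s$ and $t$ visible, which matches how the lemma is invoked in the proof of Theorem~\ref{th:UnbalanceQu3+}, but your version is shorter and arguably more illuminating.
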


The rest of this section is devoted to a description of our extension of the Color Energy Method to bipartite graphs (in Section~\ref{subsec:colorenergy}), followed by an illustration how to apply this extension of the Color Energy Method by proving Theorem~\ref{th:Kt+} (in Section~\ref{subsec:example}).


\subsection{Color Energy Method}\label{subsec:colorenergy}

Motivated by the additive energy in additive combinatorics, Pohoata and Sheffer \cite{PoSh} defined the color energy of an edge-colored graph. Using this new tool, they studied the Erd\H{o}s-Gy\'{a}rf\'{a}s function and a problem related to the Erd\H{o}s distinct distances problem (see \cite{Erd46,GuKa}) in discrete geometry. In \cite{FiPS}, Fish, Pohoata and Sheffer introduced the concept of higher color energies. Recently, Balogh, English, Heath and Krueger \cite{BEHK} further developed the Color Energy Method and applied it to establish various lower bounds on the Erd\H{o}s-Gy\'{a}rf\'{a}s function. In this subsection, we will extend the Color Energy Method to study the bipartite Erd\H{o}s-Gy\'{a}rf\'{a}s function. We remark that the generalization of this method from complete graphs to complete bipartite graphs (as the host graph) is not trivial. We will give two examples to illustrate this in the concluding Section~\ref{sec:concluding}.

We start by giving our definition of the color energy and the color energy graph of an edge-colored bipartite graph.

\begin{definition}\label{def:energy_graph} {\normalfont Let $G=G(A, B)$ be a copy of $K_{n,n}$ with an edge-coloring $c\colon\, E(K_{n,n})\to C(G)$. For an integer $r\geq 2$, the \emph{$r$th color energy} of $G$ is defined to be $\mathbb{E}_r(G)\colonequals |\{(a_1, \ldots, a_r,$ $b_1, \ldots, b_r)\in A^r\times B^r\colon\, c(a_1b_1)=\cdots =c(a_rb_r)\}|$. The \emph{$r$th color energy graph $G^r$} of $G$ is a bipartite graph with partite sets $A^r$ and $B^r$, in which there is an edge between $(a_1, \ldots, a_r)$ and $(b_1, \ldots, b_r)$ if and only if $c(a_1b_1)=\cdots =c(a_rb_r)$.}
\end{definition}

Note that $\mathbb{E}_r(G)=|E(G^r)|$. Using this, we can give the following lower bound on the number of colors used on the edges of $G$, which is an expression in terms of the number of edges of $G^r$.

\begin{proposition}\label{prop:|C(G)|} $|C(G)|\geq \left(\frac{n^{2r}}{|E(G^r)|}\right)^{\frac{1}{r-1}}$.
\end{proposition}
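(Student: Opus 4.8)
The plan is to bound the number of colors from below by a convexity (Cauchy–Schwarz / power-mean) argument applied to the color classes of $G$, and to recognize that the resulting sum of $r$th powers is exactly $\mathbb{E}_r(G) = |E(G^r)|$.

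First I would set up notation: let $C(G) = \{1, 2, \ldots, m\}$ where $m = |C(G)|$, and for each color $i$ let $m_i$ be the number of edges of $G$ receiving color $i$. Since $G$ is a copy of $K_{n,n}$, we have $\sum_{i=1}^{m} m_i = n^2$. Next I would observe that an ordered tuple $(a_1, \ldots, a_r, b_1, \ldots, b_r) \in A^r \times B^r$ is counted by $\mathbb{E}_r(G)$ precisely when $c(a_1 b_1) = \cdots = c(a_r b_r)$, i.e. when all $r$ of the edges $a_j b_j$ lie in a common color class. For a fixed color $i$, the number of such tuples with all edges of color $i$ is exactly $m_i^r$ (each of the $r$ coordinates independently picks one of the $m_i$ edges of color $i$). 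Hence
\[
|E(G^r)| = \mathbb{E}_r(G) = \sum_{i=1}^{m} m_i^r .
\]

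Then I would apply the power-mean inequality (equivalently, Jensen's inequality for the convex function $x \mapsto x^r$, or Hölder): for nonnegative reals $m_1, \ldots, m_m$,
\[
\sum_{i=1}^{m} m_i^r \;\ge\; \frac{1}{m^{r-1}}\left(\sum_{i=1}^{m} m_i\right)^{\!r} \;=\; \frac{n^{2r}}{m^{r-1}} .
\]
Rearranging gives $m^{r-1} \ge n^{2r} / |E(G^r)|$, and taking $(r-1)$th roots yields $|C(G)| = m \ge \bigl(n^{2r}/|E(G^r)|\bigr)^{1/(r-1)}$, as claimed. (One should note $|E(G^r)| > 0$ since $n \ge 1$, so the division is legitimate, and if some $m_i = 0$ the bound only improves, so we may assume each $m_i \ge 1$.)

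There is essentially no obstacle here; the statement is a routine consequence of counting monochromatic $r$-tuples and convexity. The only point that deserves care is the bookkeeping in the identity $\mathbb{E}_r(G) = \sum_i m_i^r$ — specifically confirming that the tuples are \emph{ordered} (so coordinates may repeat and are counted with multiplicity), which matches Definition~\ref{def:energy_graph} and makes the count $m_i^r$ rather than $\binom{m_i}{r}$ or similar. This is the same mechanism already used in inequality~(\ref{al:gCorradi-2}) via Jensen's inequality, so the proof is short.
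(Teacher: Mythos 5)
Your proof is correct and is essentially identical to the paper's: both count the edges of $G^r$ colorwise as $\sum_i m_i^r$ with $\sum_i m_i = n^2$, apply the power-mean/Jensen inequality to get $\sum_i m_i^r \geq n^{2r}/|C(G)|^{r-1}$, and rearrange. No differences worth noting.
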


\begin{proof} For each color $i\in C(G)$, let $m_i$ be the number of edges with color $i$ in $G$. Then $\sum_{i\in C(G)}m_i=n^2$ and $|E(G^r)|=\sum_{i\in C(G)}m_i^r \geq |C(G)|\left(\frac{\sum_{i\in C(G)}m_i}{|C(G)|}\right)^r =\frac{n^{2r}}{|C(G)|^{r-1}}$. The result follows.
\end{proof}

Note that for any edge $\vec{a}\vec{b}\in E(G^r)$ with $\vec{a}=(a_1, \ldots, a_r)$ and $\vec{b}=(b_1, \ldots, b_r)$, the edges $a_1b_1, \ldots, a_rb_r$ are colored by the same color in $G$, and we denote this color by $c(\vec{a}\vec{b})$. For any subgraph $\mbox{sub}(G^{r})\subseteq G^r$, let $\mathcal{C}(\mbox{sub}(G^{r}))\colonequals \{i\in C(G)\colon\, c(\vec{a}\vec{b})=i \mbox{ for some } \vec{a}\vec{b}\in E(\mbox{sub}(G^{r}))\}$. We will prune the color energy graph $G^r$ in several steps. Before we can define what we mean by the pruned $r$th color energy graph, we first need the following three propositions.

\begin{proposition}\label{prop:prune} There exists a partition $A_1, \ldots, A_r$ of $A$ {\rm(}resp., $B_1, \ldots, B_r$ of $B${\rm)} with $|A_i|, |B_i|\in \{\lfloor n/r\rfloor, \lceil n/r\rceil\}$ for all $i\in [r]$, such that the subgraph $\widehat{G}^r$ of $G^r$ induced by $(A_1\times \cdots \times A_r)\cup (B_1\times \cdots \times B_r)$ satisfies $|E(\widehat{G}^r)|=\Theta(|E(G^r)|)$.
\end{proposition}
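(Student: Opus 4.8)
The plan is to use a simple averaging (probabilistic deletion) argument: we want to split each of the partite sets $A$ and $B$ into $r$ nearly-equal parts and argue that a random such split retains a constant fraction of the edges of $G^r$ in expectation. First I would observe that an edge of $G^r$ joining $\vec a=(a_1,\dots,a_r)$ to $\vec b=(b_1,\dots,b_r)$ survives in $\widehat G^r$ precisely when $a_i\in A_i$ and $b_i\in B_i$ for every $i\in[r]$; that is, the $2r$ vertices $a_1,\dots,a_r,b_1,\dots,b_r$ must land in the $2r$ prescribed classes, one per index, one per side.

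The natural way to make this clean is to choose two uniformly random bijections (orderings) of $A$ and of $B$, or equivalently to assign to each vertex of $A$ a uniformly random label in $[r]$ conditioned on the class sizes being in $\{\lfloor n/r\rfloor,\lceil n/r\rceil\}$, and similarly for $B$. I would avoid the conditioning subtlety as follows. For the moment drop the size constraint and let each vertex of $A$ get an independent uniform label in $[r]$, and likewise each vertex of $B$; this is only to get the main estimate, and I will fix the sizes afterwards. For a fixed edge $\vec a\vec b$ of $G^r$, the probability that all $a_i$ carry the label $i$ and all $b_i$ carry the label $i$ is at least $r^{-2r}$ (it is exactly $r^{-2r}$ when the $a_i$ are pairwise distinct and the $b_i$ are pairwise distinct, and at least that — in fact it can only be larger or the event can be empty, but the empty case contributes $0$ on both sides so it does no harm to use the bound $\ge r^{-2r}$ only for the "generic" edges; alternatively one notes every edge either survives with probability $0$ or exactly $r^{-2r}$, depending on whether the coordinates on each side are consistent, which they are for the diagonal-type edges that dominate). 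By linearity of expectation the expected number of surviving edges is therefore $\Omega(|E(G^r)|)$, since $r$ is a constant, so there is a labelling achieving at least this many. To get the prescribed sizes $|A_i|,|B_i|\in\{\lfloor n/r\rfloor,\lceil n/r\rceil\}$, I would instead draw the ordered partition uniformly at random among all partitions with these exact part sizes: then for a fixed edge with pairwise-distinct coordinates on each side the probability that $a_i\in A_i$ for all $i$ is $\prod_{i}\frac{|A_i|}{n}\cdot(1+o(1)) = \Theta(r^{-r})$ (a ratio of falling factorials, bounded below by a positive constant depending only on $r$), and similarly for $B$; these two events are independent since the partitions of $A$ and $B$ are chosen independently. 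Hence again the expected number of retained edges is $\Omega(|E(G^r)|)$, and a suitable partition exists. The upper bound $|E(\widehat G^r)|\le |E(G^r)|$ is trivial since $\widehat G^r$ is a subgraph, giving $|E(\widehat G^r)|=\Theta(|E(G^r)|)$.

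I would then remark on the one genuinely delicate point, which is edges of $G^r$ whose coordinates on one side are not pairwise distinct (e.g. $a_1=a_2$): such an edge can survive only if the corresponding prescribed classes coincide, i.e. $A_1=A_2$, which never happens for a genuine partition, so these edges are simply lost. The main obstacle is therefore to confirm that such "degenerate" edges do not constitute more than a $(1-\varepsilon)$-fraction of $E(G^r)$ — but this is not needed for the statement as phrased, because $\Theta$ only demands a positive constant fraction is retained, and the diagonal edges alone (those with all coordinates distinct on each side) already form a $\Theta(1)$-fraction of $E(G^r)$: each color class $i$ with $m_i\ge r$ contributes $m_i^r$ total tuples of which at least $m_i(m_i-1)\cdots(m_i-r+1)=\Theta(m_i^r)$ have distinct coordinates on each side, and colors with $m_i<r$ contribute a negligible $O(n^2 r^{r-1})=o(|E(G^r)|)$ when $|E(G^r)|$ is superlinear (and when it is not, the proposition is used downstream only in a range where $|E(G^r)|$ is large, or one can simply absorb the constant). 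So the proof amounts to: (1) random equitable partitions of $A$ and $B$; (2) linearity of expectation over edges, using independence of the two sides; (3) a falling-factorial lower bound on the per-edge survival probability for generic edges; (4) the trivial subgraph upper bound. I expect step (3), keeping track of the constant and dismissing the degenerate edges, to be the only place requiring a line of care.
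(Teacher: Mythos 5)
Your route is the same as the paper's: choose the two equitable ordered partitions of $A$ and $B$ uniformly and independently at random, lower-bound the probability that a fixed edge of $G^r$ survives by $(1-o(1))r^{-2r}$, and finish by linearity of expectation together with the trivial upper bound $|E(\widehat{G}^r)|\leq |E(G^r)|$. The paper's proof is exactly your steps (1), (2) and (4), stated for ``any edge'' of $G^r$ without comment on the degenerate ones, so in spirit the two arguments coincide.

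The one place where you go beyond the paper is, however, also the one place where your argument breaks. You claim that of the $m_i^r$ edges of $G^r$ arising from a color class with $m_i$ edges, at least $m_i(m_i-1)\cdots(m_i-r+1)$ have pairwise distinct coordinates on each side. That falling factorial counts ordered $r$-tuples of \emph{distinct edges} of color $i$, but distinct edges of one color can share endpoints: if the color class is a star $K_{1,m_i}$ centered at $a\in A$, then every $r$-tuple of its edges yields the vertex $(a,a,\ldots,a)\in A^r$, and none of the resulting $m_i^r$ edges of $G^r$ can survive in $\widehat{G}^r$, since $a\in A_1\cap\cdots\cap A_r=\emptyset$. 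Indeed, without a hypothesis excluding large monochromatic stars the proposition as stated is false: color $K_{n,n}$ by giving every edge at $a_j$ the color $j$; then $|E(G^r)|=n^{r+1}$ while $|E(\widehat{G}^r)|=0$ for every genuine partition. So your step (3) cannot be salvaged in the generality in which you (and the paper) assert it. The repair is contextual rather than internal: the proposition is only ever applied en route to Definition~\ref{def:prune energy_graph}, i.e.\ for colorings with no monochromatic $K_{1,\ell}$ ($\ell$ a constant) and with $|C(G)|=o(n^2/\log n)$. Under the star-freeness assumption, the number of $r$-tuples of edges of color $i$ having a repeated coordinate on either side is $O(r^2\ell\, m_i^{r-1})$, and by H\"{o}lder's inequality together with $\sum_i m_i=n^2$ and $\sum_i m_i^r\geq n^{2r}/|C(G)|^{r-1}=\omega(n^2)$ one gets $\sum_i m_i^{r-1}=o\big(\sum_i m_i^r\big)$, so the non-degenerate edges do dominate. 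Your proof (and, strictly, the paper's) should either import these hypotheses into the statement or carry out this estimate; the vague fallback that the proposition ``is used downstream only in a range where $|E(G^r)|$ is large'' does not by itself control the degenerate edges.
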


\begin{proof} Choose a partition $A_1, \ldots, A_r$ of $A$ (resp., $B_1, \ldots, B_r$ of $B$) among the set of partitions into $r$ parts of size $\lfloor n/r\rfloor$ or $\lceil n/r\rceil$ uniformly at random. We assume here that we choose the partitions of $A$ and $B$ independently. Let $\widehat{G}^r$ be the subgraph of $G^r$ induced by $(A_1\times \cdots \times A_r)\cup (B_1\times \cdots \times B_r)$. Let $X=|E(\widehat{G}^r)|$. For any edge $(a_1, \ldots, a_r)(b_1, \ldots, b_r)$ of $G^r$, the probability that $a_i\in A_i$ and $b_i\in B_i$ for all $i\in [r]$ is at least $(1-o(1))(1/r)^{2r}$. Thus the expectation of $X$ is at least $(1-o(1))|E(G^r)|(1/r)^{2r}$. Hence, there exists a partition as claimed in the proposition.
\end{proof}

\begin{proposition}\label{prop:prunelogn} Assume that we have $|C(G)|=o(n^2/\log n)$. Then $\widehat{G}^r$ contains a subgraph $\bar{G}^r$ such that {\rm (i)} for any color $i\in \mathcal{C}(\bar{G}^r)$, there are at least $\log n$ distinct edges in $G$ with color $i$; {\rm (ii)} $|E(\bar{G}^r)|=\Theta(|E(\widehat{G}^r)|)$.
\end{proposition}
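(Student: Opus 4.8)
The plan is to obtain $\bar G^r$ from $\widehat G^r$ by a single pruning step: delete every edge of $\widehat G^r$ whose colour is used on fewer than $\log n$ edges of $G$. With this definition property~(i) is immediate, since every surviving edge carries a colour appearing at least $\log n$ times in $G$, and hence so does every colour in $\mathcal C(\bar G^r)$. All the real work is in verifying~(ii), i.e.\ that only a $o(1)$-fraction of the edges of $\widehat G^r$ is deleted.

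I would first bound the number of deleted edges. Writing $m_i$ for the number of edges of $G$ with colour $i$, call a colour $i$ \emph{rare} if $m_i<\log n$. An edge of $G^r$ (hence of $\widehat G^r\subseteq G^r$) with colour $i$ is a pair $\big((a_1,\dots,a_r),(b_1,\dots,b_r)\big)$ satisfying $c(a_jb_j)=i$ for all $j$, so there are at most $m_i^{\,r}$ of them. Summing over rare colours, the number of deleted edges is at most
\[
\sum_{i\ \mathrm{rare}} m_i^{\,r}\ \le\ (\log n)^{r-1}\sum_{i\ \mathrm{rare}} m_i\ \le\ (\log n)^{r-1}\sum_{i\in C(G)} m_i\ =\ (\log n)^{r-1}\,n^2 .
\]

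Next I would lower-bound $|E(\widehat G^r)|$. By Proposition~\ref{prop:prune} we have $|E(\widehat G^r)|=\Theta(|E(G^r)|)$, and by convexity of $x\mapsto x^r$ — exactly the computation in the proof of Proposition~\ref{prop:|C(G)|} — we have $|E(G^r)|=\sum_{i\in C(G)} m_i^{\,r}\ge n^{2r}/|C(G)|^{r-1}$. The hypothesis $|C(G)|=o\big(n^2/\log n\big)$ gives $|C(G)|^{r-1}=o\big(n^{2(r-1)}/(\log n)^{r-1}\big)$, whence $|E(G^r)|=\omega\big(n^2(\log n)^{r-1}\big)$ and therefore $|E(\widehat G^r)|=\omega\big(n^2(\log n)^{r-1}\big)$ as well. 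Comparing with the previous display, the deleted edges form a $o(1)$-fraction of $E(\widehat G^r)$, so the graph $\bar G^r$ obtained by removing all rare-colour edges from $\widehat G^r$ satisfies $|E(\bar G^r)|=|E(\widehat G^r)|-o\big(|E(\widehat G^r)|\big)=\Theta\big(|E(\widehat G^r)|\big)$, which is~(ii).

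There is no genuine obstacle here — the argument is a clean first-moment estimate paired with the convexity bound on $|E(G^r)|$ — but the point that must be handled with care is the role of the hypothesis. The weaker assumption $|C(G)|=o(n^2)$ would \emph{not} suffice, because the deletion can remove as many as $(\log n)^{r-1}n^2$ edges, and it is precisely the extra $\log n$ factor in $|C(G)|=o(n^2/\log n)$ that forces $|E(\widehat G^r)|$ to dominate this quantity. I would also note that this second pruning of $\widehat G^r$ loses only constant and lower-order factors relative to the pruning already performed in Proposition~\ref{prop:prune}, so the two steps compose without any circularity.
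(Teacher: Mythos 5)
Your proposal is correct and follows essentially the same route as the paper: delete all edges of $\widehat{G}^r$ carrying a colour used fewer than $\log n$ times, bound the number of deletions by a first-moment count, and compare against the lower bound $|E(\widehat{G}^r)|=\Omega(n^{2r}/|C(G)|^{r-1})$ coming from Propositions~\ref{prop:|C(G)|} and~\ref{prop:prune}. The only cosmetic difference is the accounting of deleted edges (you use $\sum_{i\ \mathrm{rare}} m_i^{\,r}\le (\log n)^{r-1}n^2$, the paper uses $|C(G)|\log^r n$); both suffice under the hypothesis $|C(G)|=o(n^2/\log n)$.
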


\begin{proof} For every color $i\in C(G)$ that appears fewer than $\log n$ times in $G$, we delete the edges associated with color $i$ from $\widehat{G}^r$. Let $\bar{G}^r$ denote the resulting graph. Note that $|E(\widehat{G}^r)\setminus E(\bar{G}^r)|< |C(G)|\log^r n$. By Propositions~\ref{prop:|C(G)|} and \ref{prop:prune}, we have $|E(\widehat{G}^r)|= \Omega(n^{2r}/|C(G)|^{r-1})$. Since $(|C(G)|\log^r n)/(n^{2r}/|C(G)|^{r-1})=((|C(G)|\log n)/n^2)^r=o(1)$, we have $|E(\bar{G}^r)|= \Theta(|E(\widehat{G}^r)|)$.
\end{proof}

\begin{proposition}\label{prop:pruneP3} Assume that $G$ contains no monochromatic star $K_{1,\ell}$ for some constant $\ell>0$. Then $\bar{G}^r$ contains a subgraph $\widetilde{G}^r$ such that {\rm (i)} if $\vec{x}, \vec{y}\in V(\widetilde{G}^r)$ have a common neighbor in $\widetilde{G}^r$, then $\vec{x}$ and $\vec{y}$ are not equal in any coordinate; {\rm (ii)} $|E(\widetilde{G}^r)|=\Theta(|E(\bar{G}^r)|)$.
\end{proposition}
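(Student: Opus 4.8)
The plan is to prune $\bar{G}^r$ further by a random restriction that separates coordinates, exactly analogous to the way $\widehat{G}^r$ was obtained from $G^r$ in Proposition~\ref{prop:prune}. First I would recall what structure is already available: in $\bar{G}^r$ the partite sets are $A_1\times\cdots\times A_r$ and $B_1\times\cdots\times B_r$, where $A_1,\dots,A_r$ (resp.\ $B_1,\dots,B_r$) is the fixed partition of $A$ (resp.\ $B$) from Proposition~\ref{prop:prune}. Hence any two vertices $\vec{x}=(x_1,\dots,x_r)$ and $\vec{y}=(y_1,\dots,y_r)$ on the same side automatically satisfy $x_i\ne y_i$ whenever $x_i,y_i$ would have to lie in different blocks — but since they lie in the \emph{same} block $A_i$ in each coordinate, equality in a coordinate is still possible. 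So the real task is to kill coincidences within a single block, and the obstruction to be handled is the monochromatic-star hypothesis: if $\vec{x}$ and $\vec{y}$ have a common neighbor $\vec{z}=(z_1,\dots,z_r)$ in $\widetilde{G}^r$, then $c(x_iz_i)=c(y_iz_i)$ for the common color, so $x_i=y_i$ would force two edges at $z_i$ with the same color, i.e.\ a monochromatic $K_{1,2}$ — which is allowed unless $\ell=2$. The point of the hypothesis ``no monochromatic $K_{1,\ell}$'' is therefore that each color class, viewed as a subgraph of $K_{n,n}$, has maximum degree $<\ell$; this is the bounded-degree fact I will exploit.

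The key step is a second random partition. For each $i\in[r]$, partition $A_i$ uniformly at random into $A_i^{(1)},\dots,A_i^{(K)}$ of nearly equal size, and likewise partition $B_i$ into $B_i^{(1)},\dots,B_i^{(K)}$, where $K$ is a constant depending only on $\ell$ and $r$ (something like $K=C\ell r$ will suffice, as the degree bound $\Delta<\ell$ per color controls how many vertices in one block can collide on a given color). Then I would pass to the subgraph $\widetilde{G}^r$ of $\bar{G}^r$ induced by the vertex set
\[
\bigl(A_1^{(k_1)}\times\cdots\times A_r^{(k_r)}\bigr)\ \cup\ \bigl(B_1^{(k_1')}\times\cdots\times B_r^{(k_r')}\bigr)
\]
for a well-chosen choice of indices $(k_1,\dots,k_r)$ and $(k_1',\dots,k_r')$ — or, more cleanly, keep \emph{all} blocks but argue that after discarding the edges that go between cells violating the separation property, a constant fraction of $E(\bar{G}^r)$ survives, and then pigeonhole onto one good pair of cells. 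The expected fraction of edges surviving is $\Omega_{\ell,r}(1)$: a fixed edge $(a_1,\dots,a_r)(b_1,\dots,b_r)$ of color $i$ survives and is ``good'' provided that, for every index at which a potential collision could occur, the colliding vertices are scattered into different sub-blocks; since each color class has degree $<\ell$, the number of vertices that could collide with a given one on a fixed color is bounded by a constant, and choosing $K$ large enough makes the probability of successful separation at least some constant $c(\ell,r)>0$. Linearity of expectation then yields a choice of the refined partition with $|E(\widetilde{G}^r)|=\Omega(|E(\bar{G}^r)|)$, and since trivially $|E(\widetilde{G}^r)|\le|E(\bar{G}^r)|$, part (ii) follows.

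It remains to verify part (i) with this construction. Suppose $\vec{x},\vec{y}\in V(\widetilde{G}^r)$ share a common neighbor $\vec{z}$ in $\widetilde{G}^r$, say with $c(x_1z_1)=\cdots=c(x_rz_r)=i$ and $c(y_1z_1)=\cdots=c(y_rz_r)=i'$ (they need not be the same color). If $x_i=y_i$ for some coordinate $i$, I would derive a contradiction: the vertex $z_i$ then sends edges of colors $i$ and $i'$ to the same endpoint $x_i=y_i$; if $i=i'$ this is literally an edge used twice, so we may assume $i\ne i'$ — but this alone is not yet contradictory, so the separation property I build must be slightly stronger. The fix is to design the refinement so that, in addition, within each pair of cells no color appears on two distinct edges incident to the retained block (again possible because color classes have bounded degree, by the no-$K_{1,\ell}$ hypothesis), whence the common-neighbor relation forces the two edge-endpoints at $z_i$ to be distinct, contradicting $x_i=y_i$. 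The main obstacle is precisely pinning down the right separation property — strong enough to give (i) yet weak enough that a constant-size refinement retains a constant fraction of the edges — and checking that the bounded-degree-per-color fact delivers exactly the constants needed; the random-partition and linearity-of-expectation bookkeeping is then routine, mirroring Proposition~\ref{prop:prune}.
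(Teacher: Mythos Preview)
Your random-refinement plan does not give property (i), and you seem to sense this yourself halfway through. Passing to a sub-block product $A_1^{(k_1)}\times\cdots\times A_r^{(k_r)}$ does nothing to prevent two retained vertices from agreeing in a coordinate: if $x_i=y_i$ then $x_i$ and $y_i$ lie in the \emph{same} cell $A_i^{(k_i)}$, so no amount of refining can separate them. Your attempted patch (``design the refinement so that within each pair of cells no color appears on two distinct edges\ldots'') is still a vertex/colour restriction, whereas what you need is a restriction on \emph{edges incident to a given vertex of $\bar G^r$}; the obstacle you identify (that a common neighbour $\vec z$ with $x_i=y_i$ and colours $i\ne i'$ gives no contradiction) is real and is not resolved by any partition of the ground set.

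The paper's argument is a direct greedy deletion, and the key quantitative observation is one you never isolate: for any fixed $\vec a=(a_1,\dots,a_r)$, any coordinate $k$, and any fixed value $u\in B$, the number of neighbours $\vec b$ of $\vec a$ in $\bar G^r$ with $k$th coordinate $u$ is at most $(\ell-1)^{r-1}$. Indeed, once $b_k=u$ the colour $c_0=c(a_ku)$ of the edge $\vec a\vec b$ is determined, and then each remaining coordinate $b_j$ ($j\ne k$) must satisfy $c(a_jb_j)=c_0$, giving at most $\ell-1$ choices per coordinate by the no-$K_{1,\ell}$ hypothesis. With this bound in hand one simply thins: for each $\vec a$ and each coordinate $k$ in turn, keep at most one incident edge per possible $k$th-coordinate value of the neighbour, losing at most a factor $(\ell-1)^{r-1}$ per step and hence $(\ell-1)^{r(r-1)}$ over all $r$ coordinates. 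Repeat symmetrically for the $B$-side. In the resulting $\widetilde G^r$ every vertex has all its neighbours pairwise distinct in every coordinate, which is exactly condition (i), and only a constant fraction of edges has been discarded, giving (ii).
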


\begin{proof} Note that $\bar{G}^r$ is a bipartite graph with partite sets $A_1\times \cdots \times A_r$ and $B_1\times \cdots \times B_r$. For any $\vec{a}=(a_1, \ldots, a_r)\in A_1\times \cdots \times A_r$, let $d(\vec{a})$ be the degree of $\vec{a}$ in $\bar{G}^r$. For any $u\in B$ such that $\vec{a}$ has a neighbor with first coordinate $u$ in $\bar{G}^r$, let $N_u(\vec{a})=\{\vec{b}=(u, b_2, \ldots, b_r): \vec{a}\vec{b}\in E(\bar{G}^r)\}$. Let $c_0\colonequals c(a_1u)$. Then $c(a_2b_2)=\cdots =c(a_rb_r)=c_0$ for any $b_2, \ldots, b_r$ with $(u, b_2, \ldots, b_r)\in N_u(\vec{a})$. Since $G$ contains no monochromatic star $K_{1,\ell}$, we have $|N_u(\vec{a})|\leq (\ell-1)^{r-1}$, i.e., $\vec{a}$ has at most $(\ell-1)^{r-1}$ neighbors with first coordinate $u$. Similarly, for any $k\in [r]$ and $v\in B$, $\vec{a}$ has at most $(\ell-1)^{r-1}$ neighbors with $k$th coordinate $v$. We perform the following $r$-step operation to $\vec{a}$: in the first step, for each vertex $v\in B$ such that $\vec{a}$ has a neighbor with first coordinate $v$, we keep one edge of all edges joining $\vec{a}$ and its neighbors with first coordinate $v$; in the $k$th step ($2\leq k\leq r$), for each vertex $v\in B$ such that $\vec{a}$ has a neighbor with $k$th coordinate $v$ in the resulting subgraph of the $(k-1)$th step, we keep one edge of all edges joining $\vec{a}$ and its neighbors with $k$th coordinate $v$. We perform this operation for all $\vec{a}\in A_1\times \cdots \times A_r$ one-by-one, and let $\widetilde{G}^r_0$ be the remaining subgraph of $\bar{G}^r$. Let $d'(\vec{a})$ be the degree of $\vec{a}$ in $\bar{G}^r_0$. Then $d'(\vec{a})\geq d(\vec{a})/(\ell-1)^{r(r-1)}$. Thus $|E(\widetilde{G}^r_0)|\geq |E(\bar{G}^r)|/(\ell-1)^{r(r-1)}= \Theta(|E(\bar{G}^r)|)$. For each $\vec{b}\in B_1\times \cdots \times B_r$, we perform an analogous operation in $\widetilde{G}^r_0$. Let $\widetilde{G}^r$ be the resulting subgraph of $\widetilde{G}^r_0$. Then $|E(\widetilde{G}^r)|\geq |E(\widetilde{G}^r_0)|/(\ell-1)^{r(r-1)}=\Theta(|E(\bar{G}^r)|)$. Moreover, from the construction, $\widetilde{G}^r$ satisfies condition (i).
\end{proof}

We now give our definition of the pruned $r$th color energy graph. Note that Propositions~\ref{prop:prune}, \ref{prop:prunelogn} and \ref{prop:pruneP3} guarantee the existence of such a graph.

\begin{definition}\label{def:prune energy_graph} {\normalfont Let $G=G(A, B)$ be a copy of $K_{n,n}$ with an edge-coloring $c: E(K_{n,n})\to C(G)$ such that $|C(G)|=o(n^2/\log n)$ and $G$ contains no monochromatic star $K_{1,\ell}$ for some constant $\ell>0$. For an integer $r\geq 2$, the \emph{pruned $r$th color energy graph $\widetilde{G}^r$} of $G$ is a subgraph of $G^r$ with the following properties:
\begin{itemize}
\item[{\rm (i)}] There exists a partition $A_1, \ldots, A_r$ of $A$ {\rm (}resp., $B_1, \ldots, B_r$ of $B${\rm )} with $|A_i|, |B_i| \in \{\lfloor n/r\rfloor, \lceil n/r\rceil\}$ for all $i\in [r]$, such that $V(\widetilde{G}^r)= (A_1\times \cdots \times A_r)\cup (B_1\times \cdots \times B_r)$.
\item[{\rm (ii)}] For any color $i\in \mathcal{C}(\widetilde{G}^r)$, there are at least $\log n$ edges in $G$ with color $i$.
\item[{\rm (iii)}] If $\vec{x}, \vec{y}$ have a common neighbor in $\widetilde{G}^r$, then $\vec{x}$ and $\vec{y}$ are not equal in any coordinate.
\item[{\rm (iv)}] $|E(\widetilde{G}^r)|=\Theta(|E(G^r)|)$.
\end{itemize}
}
\end{definition}

By Proposition \ref{prop:|C(G)|} and the above property (iv), we have
\begin{equation}\label{eq:color_edge}
|C(G)|=\Omega\Bigg(\left(\frac{n^{2r}}{|E(\widetilde{G}^r)|}\right)^{\frac{1}{r-1}}\Bigg).
\end{equation}
In order to establish a lower bound on $|C(G)|$, it suffices to prove an upper bound on $|E(\widetilde{G}^r)|$. A possible strategy for this is the following. Suppose $H$ is a bipartite graph with bipartite Tur\'{a}n number ex$(m,m, H)=O(m^{2-\alpha})$. If we can prove that $\widetilde{G}^r$ is $H$-free, then this implies $|E(\widetilde{G}^r)|=O((|V(\widetilde{G}^r)|/2)^{2-\alpha})=O(n^{(2-\alpha)r})$. In turn, using (\ref{eq:color_edge}), this would give us that $|C(G)|=\Omega\big(n^{\frac{\alpha r}{r-1}}\big)$. In the proofs of Theorems \ref{th:C_p}, \ref{th:r=2theta}, \ref{th:r3theta} and \ref{th:Kt+}, we shall prove that $\widetilde{G}^r$ (with appropriate $r$) contains no even cycle $C_{2\lfloor p/2 \rfloor}$, theta graph $\mit\Theta$$(s, 4s^2t)$, theta graph $\mit\Theta$$(s, 2rs^2t)$ and subdivision $K^{1}_{t}$ of $K_t$, respectively.

Given a subgraph $H$ of $\widetilde{G}^r$, the \emph{corresponding structure of $H$} in $G$ is a subgraph of $G$ defined as follows. The vertex set is $\{v\in V(G)\colon\, v \mbox{\ is\ a\ coordinate\ of\ some\ vertex\ of\ }H\}$. Two vertices $u\in A$ and $v\in B$ are adjacent if there exist an edge $\vec{x}\vec{y}\in E(H)$ and an index $j\in [r]$ such that $u$ is the $j$th coordinate of $\vec{x}$ and $v$ is the $j$th coordinate of $\vec{y}$. Note that this subgraph of $G$ is a bipartite graph with at most $r|E(H)|$ edges, whose two partite sets have sizes at most $r|V(H)\cap A^r|$ and $r|V(H)\cap B^r|$, respectively.

In the next subsection, we illustrate how to apply the above extension of the Color Energy Method by proving Theorem~\ref{th:Kt+}.


\subsection{Proof of Theorem~\ref{th:Kt+}}\label{subsec:example}

The case $t=2$ follows from Theorem~\ref{th:UnbalanceQu1} (ii) which will be proved in Section~\ref{sec:unbalanced}. The case $t=3$ follows from a result of Axenovich, F\"{u}redi and Mubayi (see \cite[Theorem~4.3]{AxFM}). Hence, we may assume that $t\geq 4$ in the following argument. Let $G=G(A, B)$ be a copy of $K_{n,n}$ with an edge-coloring $c\colon\, E(K_{n,n}) \to C(G)$, in which every $K_{2t,t(t-1)}$ receives at least $2t^2(t-1)-t(t-1)+1$ distinct colors. Equivalently, every $K_{2t,t(t-1)}$ receives at most $t(t-1)-1$ color repetitions. Let $A=\{a_1, a_2, \ldots, a_n\}$ and $B=\{b_1, b_2, \ldots, b_n\}$. Our goal is to prove $|C(G)|=\Omega(n^{1+1/(2t-3)})$. We may assume $|C(G)|=o(n^{1+1/(2t-3)})$; otherwise we are done.

We first show that $G$ contains no monochromatic $K_{1, t(t-1)-2}$. For a contradiction, suppose that $\{b_1, a_1, a_2, \ldots, a_{t(t-1)-2}\}$ forms a monochromatic star. For each $j\in [n]$, let $X_j\colonequals \{i\in C(G)\colon\, c(a_jb_{\ell})=i \mbox{ for some } 2\leq \ell \leq n\}$. Suppose that $|X_j|< (n-1)/3$ for some $j\in [n]$. Then there exist four edges of the same color incident with $a_j$. These four edges together with the monochromatic $K_{1, t(t-1)-2}$ form a subgraph of $K_{2t,t(t-1)}$ with at least $t(t-1)-3+3=t(t-1)$ color repetitions, a contradiction. Hence, we have $|X_j|\geq (n-1)/3$ for every $j\in [n]$. Suppose that $|X_{j_1}\cap X_{j_2}|\geq 3$ for some $1\leq j_1<j_2\leq n$. Since $2t\geq 8$, there exists a subgraph of $K_{2t,t(t-1)}$ with at least $t(t-1)-3+3=t(t-1)$ color repetitions, a contradiction. Hence, we have $|X_{j_1}\cap X_{j_2}|\leq 2$ for every $1\leq j_1<j_2\leq n$. By Lemma~\ref{le:Corradi}, we have $|C(G)|\geq |X_1\cup X_2\cup \cdots \cup X_n|\geq \frac{((n-1)/3)^2n}{(n-1)/3+(n-1)2}=\Omega(n^2)$, contradicting our assumption that $|C(G)|=o(n^{1+1/(2t-3)})$. Therefore, $G$ contains no monochromatic $K_{1, t(t-1)-2}$.

Hence, the pruned second color energy graph $\widetilde{G}^2$ exists. By Theorem~\ref{th:Tu} (i) and the arguments at the end of Section~\ref{subsec:colorenergy}, it suffices to prove that $\widetilde{G}^2$ is $K^1_t$-free. Suppose for a contradiction that $\widetilde{G}^2$ contains a copy $\widetilde{H}$ of $K^1_t$. Note that $\widetilde{H}$ is a $(t(t-1))$-edge bipartite graph whose two partite sets have sizes $t$ and $t(t-1)/2$, respectively. Without loss of generality, Let $V(\widetilde{H})=\{\vec{a}_1, \ldots, \vec{a}_t\}\cup \{\vec{b}_{1,2}, \ldots, \vec{b}_{1,t}, \vec{b}_{2,3}, \ldots, \vec{b}_{2,t}, \ldots, \vec{b}_{t-1,t}\}$, where $\vec{a}_i=(a^{(1)}_i, a^{(2)}_i)$, $a^{(1)}_i\in A_1$, $a^{(2)}_i\in A_2$ for all $i\in [t]$, $\vec{b}_{i,j}=(b^{(1)}_{i,j}, b^{(2)}_{i,j})$, $b^{(1)}_{i,j}\in B_1$, $b^{(2)}_{i,j}\in B_2$ for all $1\leq i<j\leq t$, and $\vec{b}_{i,j}$ is adjacent to $\vec{a}_i$ and $\vec{a}_j$ in $\widetilde{H}$ for every $1\leq i<j\leq t$. For any $1\leq i< j\leq t$, since $\vec{a}_i$ and $\vec{a}_j$ have a common neighbor $\vec{b}_{i,j}$ in $\widetilde{H}$, the two vertices $\vec{a}_i$ and $\vec{a}_j$ are not equal in any coordinate by Definition~\ref{def:prune energy_graph} (iii).

Note that the corresponding structure of $\widetilde{H}$ in $G$ is a subgraph of $K_{2t,t(t-1)}$. This subgraph has at most $2t(t-1)$ edges. We shall prove that this subgraph has exactly $2t(t-1)$ edges, and thus $t(t-1)$ color repetitions by the definition of $\widetilde{G}^2$. This contradicts the fact that every $K_{2t,t(t-1)}$ receives at most $t(t-1)-1$ color repetitions, and thus completes the proof. Note that every vertex $\vec{b}_{i,j}$ is incident with exactly two edges $\vec{a}_i\vec{b}_{i,j}$ and $\vec{a}_j\vec{b}_{i,j}$ in $\widetilde{H}$. It suffices to show that for any $1\leq i< j\leq t$ and $1\leq k< \ell \leq t$ with $\{i,j\}\neq \{k,\ell\}$, the corresponding edges of $\vec{a}_i\vec{b}_{i,j}$, $\vec{a}_j\vec{b}_{i,j}$, $\vec{a}_k\vec{b}_{k,\ell}$ and $\vec{a}_{\ell}\vec{b}_{k,\ell}$ in $G$ are eight pairwise distinct edges.

If $\{i,j\}\cap \{k, \ell\}\neq \emptyset$, then $\vec{b}_{i,j}$ and $\vec{b}_{k, \ell}$ have a common neighbor. Thus $\vec{b}_{i,j}$ and $\vec{b}_{k, \ell}$ are not equal in any coordinate by Definition~\ref{def:prune energy_graph} (iii). Thus in this case, for each $m\in [2]$, the four edges $a^{(m)}_ib^{(m)}_{i,j}$, $a^{(m)}_jb^{(m)}_{i,j}$, $a^{(m)}_kb^{(m)}_{k,\ell}$ and $a^{(m)}_{\ell}b^{(m)}_{k,\ell}$ are pairwise distinct in $G$. If $\{i,j\}\cap \{k, \ell\}= \emptyset$, then $i, j, k$ and $\ell$ are pairwise distinct. For any $x,y\in \{i,j,k,\ell\}$ with $x\neq y$, the two vertices $\vec{a}_x$ and $\vec{a}_y$ have a common neighbor $\vec{b}_{x,y}$. This implies that for each $m\in [2]$, the four vertices $a^{(m)}_i$, $a^{(m)}_j$, $a^{(m)}_k$ and $a^{(m)}_{\ell}$ are pairwise distinct in $G$. Thus the corresponding edges of $\vec{a}_i\vec{b}_{i,j}$, $\vec{a}_j\vec{b}_{i,j}$, $\vec{a}_k\vec{b}_{k,\ell}$ and $\vec{a}_{\ell}\vec{b}_{k,\ell}$ in $G$ are eight pairwise distinct edges. This completes the proof of Theorem~\ref{th:Kt+}.


\section{Proof of Theorem~\ref{th:C_p}}\label{sec:PfC_p}

To prove Theorem~\ref{th:C_p}, we shall give an upper bound on $|E(\widetilde{G}^2)|$ using the bipartite Tur\'{a}n number for even cycles.

\begin{proof}[Proof of Theorem~\ref{th:C_p}] The case $2\leq p\leq 3$ is trivial since $n^{2-2/\lfloor p/2\rfloor}=1$. The case $4\leq p\leq 5$ holds since $n^{2-2/\lfloor p/2\rfloor}=n$ and $r(K_{n,n}, K_{p,p}, p^2-p+1)\geq r(K_{n,n}, K_{p,p}, p^2-2p+3)=\Omega(n)$ (see \cite[Corollary~4.2]{AxFM}). The case $p=6$ follows from the lower bound given by Axenovich, F\"{u}redi and Mubayi (see \cite[Theorem~4.3]{AxFM}). Thus we may assume that $p\geq 7$. Let $k\colonequals \lfloor p/2\rfloor$.

Let $G=G(A, B)$ be a copy of $K_{n,n}$ with an edge-coloring $c: E(K_{n,n}) \to C(G)$, in which every $K_{p,p}$ receives at least $p^2-p+1$ distinct colors. Equivalently, every $K_{p,p}$ receives at most $p-1$ color repetitions. Let $A=\{a_1, a_2, \ldots, a_n\}$ and $B=\{b_1, b_2, \ldots, b_n\}$. We shall prove that $|C(G)|=\Omega(n^{2-2/\lfloor p/2\rfloor})$. If $|C(G)|=\Omega(n^2/\log n)$, then we are done. So we may assume $|C(G)|=o(n^2/\log n)$.

We first show that $G$ contains no monochromatic $K_{1, p-2}$. For a contradiction, suppose that $\{b_1, a_1, a_2, \ldots, a_{p-2}\}$ forms a monochromatic star. For each $j\in [n]$, let $X_j\colonequals \{i\in C(G)\colon\, c(a_jb_{\ell})=i \mbox{ for some } 2\leq \ell \leq n\}$. Suppose that $|X_j|< (n-1)/3$ for some $j\in [n]$. Then there exist four edges of the same color incident with $a_j$. These four edges together with the monochromatic $K_{1, p-2}$ form a subgraph of $K_{p,p}$ with at least $p-3+3=p$ color repetitions, a contradiction. Hence, we have $|X_j|\geq (n-1)/3$ for every $j\in [n]$. Suppose that $|X_{j_1}\cap X_{j_2}|\geq 3$ for some $1\leq j_1<j_2\leq n$. Recall that $p\geq 7$. Then $G$ contains a subgraph of $K_{p,p}$ with at least $p-3+3=p$ color repetitions, a contradiction. Hence, we have $|X_{j_1}\cap X_{j_2}|\leq 2$ for every $1\leq j_1<j_2\leq n$. By Lemma~\ref{le:Corradi}, we have $|C(G)|\geq |X_1\cup X_2\cup \cdots \cup X_n|\geq \frac{((n-1)/3)^2n}{(n-1)/3+(n-1)2}=\Omega(n^2)$, contradicting our assumption that $|C(G)|=o(n^2/\log n)$. Therefore, $G$ contains no monochromatic $K_{1, p-2}$.

Hence, the pruned second color energy graph $\widetilde{G}^2$ exists. By Theorem~\ref{th:Tu} (iii) and the arguments at the end of Section~\ref{subsec:colorenergy}, it suffices to prove that $\widetilde{G}^2$ is $C_{2k}$-free. Suppose for a contradiction that $\widetilde{G}^2$ contains a copy $\widetilde{C}$ of $C_{2k}$. We write $\widetilde{C}=(a^{(1)}_1, a^{(2)}_1)(b^{(1)}_2, b^{(2)}_2)(a^{(1)}_3, a^{(2)}_3)(b^{(1)}_4,$ $b^{(2)}_4)\cdots (a^{(1)}_{2k-1}, a^{(2)}_{2k-1})(b^{(1)}_{2k}, b^{(2)}_{2k})(a^{(1)}_1, a^{(2)}_1)$, where $a^{(1)}_i\in A_1$, $a^{(2)}_i$ $\in A_2$ for odd $i\in [2k]$, and $b^{(1)}_i\in B_1$, $b^{(2)}_i\in B_2$ for even $i\in [2k]$. By the definition of $\widetilde{G}^2$, we have $c(a^{(1)}_ib^{(1)}_{i+1})=c(a^{(2)}_ib^{(2)}_{i+1})$ (resp., $c(b^{(1)}_ia^{(1)}_{i+1})=c(b^{(2)}_ia^{(2)}_{i+1})$; here $a^{(1)}_{2k+1}\colonequals a^{(1)}_1$ and $a^{(2)}_{2k+1}\colonequals a^{(2)}_1$) for odd $i\in [2k]$ (resp., even $i\in [2k]$). Note that $a^{(1)}_1, a^{(2)}_1, a^{(1)}_3, a^{(2)}_3, \ldots, a^{(1)}_{2k-1}, a^{(2)}_{2k-1}$ (resp., $b^{(1)}_2, b^{(2)}_2, b^{(1)}_4, b^{(2)}_4, \ldots, b^{(1)}_{2k},$ $b^{(2)}_{2k}$) may not be pairwise distinct, and we use $X$ (resp., $Y$) to denote the set of these vertices. So $|X|\leq 2k$ and $|Y|\leq 2k$. In the following, we will show that $G$ contains a subgraph of $K_{p,p}$ with at least $p$ color repetitions, which is a contradiction.

If $|X|=|Y|=2k$, then the subgraph of $G$ induced by $X\cup Y$ is a $K_{2k,2k}$ receiving at least $2k$ color repetitions. This is a contradiction when $p$ is even. When $p$ is odd, there exists an edge of the same color as $a^{(1)}_1b^{(1)}_2$ between $A\setminus X$ and $B\setminus Y$ (this follows from Definition~\ref{def:prune energy_graph} (ii)). Then there exists a $K_{p,p}$ receiving at least $2k+1=p$ color repetitions, a contradiction. Therefore, we have $|X|<2k$ or $|Y|<2k$.

We consider the edges of $\widetilde{C}$ (and the corresponding structures in $G$) one-by-one. In the $i$th step, if $i\in [2k]$ is odd (resp., $i\in [2k]$ is even), then we consider the edge $(a^{(1)}_i, a^{(2)}_i)(b^{(1)}_{i+1}, b^{(2)}_{i+1})$ (resp., $(b^{(1)}_i, b^{(2)}_i)(a^{(1)}_{i+1}, a^{(2)}_{i+1})$). For convenience, let $H_0$ denote a graph with no vertices, and let $A_0=B_0=\emptyset$. For odd $i\in [2k]$, let $A_i \colonequals  A_{i-1}\cup \{a^{(1)}_i, a^{(2)}_i\}$ and $B_i\colonequals B_{i-1}$, and let $H_i$ be the graph obtained from $H_{i-1}$ by adding vertices $a^{(1)}_i, a^{(2)}_i, b^{(1)}_{i+1}, b^{(2)}_{i+1}$ and edges $a^{(1)}_ib^{(1)}_{i+1}$ and $a^{(2)}_ib^{(2)}_{i+1}$ (note that some of these vertices and edges may already be in $H_{i-1}$, and we do not add such vertices and edges repeatedly). For even $i\in [2k]$, let $A_i \colonequals  A_{i-1}$ and $B_i\colonequals B_{i-1}\cup \{b^{(1)}_i, b^{(2)}_i\}$, and let $H_i$ be the graph obtained from $H_{i-1}$ by adding vertices $b^{(1)}_i, b^{(2)}_i, a^{(1)}_{i+1}, a^{(2)}_{i+1}$ and edges $b^{(1)}_ia^{(1)}_{i+1}$ and $b^{(2)}_ia^{(2)}_{i+1}$ (we do not add vertices and edges repeatedly).

For any odd $i\in [2k]$, we have $c(a^{(1)}_ib^{(1)}_{i+1})=c(a^{(2)}_ib^{(2)}_{i+1})$ and one of the following holds:

\begin{itemize}
\item[(1$^o$)] $a^{(1)}_ib^{(1)}_{i+1}\notin E(H_{i-1})$ and $a^{(2)}_ib^{(2)}_{i+1}\notin E(H_{i-1})$;
\item[(2$^o$)] exactly one of the edges $a^{(1)}_ib^{(1)}_{i+1}$ and $a^{(2)}_ib^{(2)}_{i+1}$ is an edge of $H_{i-1}$;
\item[(3$^o$)] $a^{(1)}_ib^{(1)}_{i+1}\in E(H_{i-1})$ and $a^{(2)}_ib^{(2)}_{i+1}\in E(H_{i-1})$.
\end{itemize}

For any even $i\in [2k]$, we have $c(b^{(1)}_ia^{(1)}_{i+1})=c(b^{(2)}_ia^{(2)}_{i+1})$ and one of the following holds:

\begin{itemize}
\item[(1$^e$)] $b^{(1)}_ia^{(1)}_{i+1}\notin E(H_{i-1})$ and $b^{(2)}_ia^{(2)}_{i+1}\notin E(H_{i-1})$;
\item[(2$^e$)] exactly one of the edges $b^{(1)}_ia^{(1)}_{i+1}$ and $b^{(2)}_ia^{(2)}_{i+1}$ is an edge of $H_{i-1}$;
\item[(3$^e$)] $b^{(1)}_ia^{(1)}_{i+1}\in E(H_{i-1})$ and $b^{(2)}_ia^{(2)}_{i+1}\in E(H_{i-1})$.
\end{itemize}

If (1$^o$) or (1$^e$) holds, then we get at least one new color repetition in step $i$.

If (2$^o$) or (2$^e$) holds, then we get exactly one new color repetition in step $i$.

If (3$^o$) (resp., (3$^e$)) holds, then by the definition of $\widetilde{G}^2$, we have $a^{(1)}_i, a^{(2)}_i\in A_{i-2}$ and $b^{(1)}_{i+1}, b^{(2)}_{i+1}\in B_{i-1}$ (resp., $b^{(1)}_i, b^{(2)}_i\in B_{i-2}$ and $a^{(1)}_{i+1}, a^{(2)}_{i+1}\in A_{i-1}$).

Let $m_o$ and $m_e$ be the number of steps in which (3$^o$) and (3$^e$) applies, respectively. Then $|A_{2k}|\leq 2k-2m_o$ and $|B_{2k}|\leq 2k-2m_e$. On the other hand, if (3$^o$) applies in step $i$ for some odd $i\in [2k]$, then we also have $B_{i+1}=B_{i-1}$. Thus $|B_{2k}|\leq 2k-2m_o$, so $|B_{2k}|\leq 2k-2\max\{m_o, m_e\}\leq 2k-m_o-m_e$. If (3$^e$) applies in step $i$ for some even $i\in [2k-2]$, then we also have $A_{i+1}=A_{i-1}$. Thus $|A_{2k}|\leq 2k-2(m_e-1)$, so $|A_{2k}|\leq 2k-2\max\{m_o, m_e-1\}\leq 2k-m_o-m_e+1$.

Note that $H_{2k}$ (as a subgraph of $G$) has at least $2k-m_o-m_e$ color repetitions. Let $c_0$ be the color used on $a^{(1)}_1b^{(1)}_2$. By Definition~\ref{def:prune energy_graph} (ii), color $c_0$ appears at least $\log n$ times in $G$.

We claim that $|A_{2k}|\geq 2k-m_o-m_e+1$ (and thus $|A_{2k}|= 2k-m_o-m_e+1$). Otherwise if $|A_{2k}|\leq 2k-m_o-m_e$, then we can add $p-(2k-m_o-m_e)$ additional edges of color $c_0$ (and at most $p-(2k-m_o-m_e)$ vertices from each part of $G$ if necessary) to $H_{2k}$, forming a subgraph of $K_{p,p}$ with at least $p$ color repetitions. This contradicts the assumption that every $K_{p,p}$ receives at least $p^2-p+1$ distinct colors in $G$.

If $m_o\neq m_e-1$, then it is easy to check that $|A_{2k}|\leq 2k-2\max\{m_o, m_e-1\}\leq 2k-m_o-m_e$, a contradiction. If (3$^e$) does not apply in step $2k$, then $|A_{2k}|\leq 2k-2\max\{m_o, m_e\}\leq 2k-m_o-m_e$, a contradiction. Hence, we have $m_o=m_e-1$ and (3$^e$) applies in step $2k$.

Since (3$^e$) applies in step $2k$, we may assume that $b^{(1)}_{2k}a^{(1)}_1\in E(H_{j_1})\setminus E(H_{j_1-1})$ and $b^{(2)}_{2k}a^{(2)}_1\in E(H_{j_2})\setminus E(H_{j_2-1})$ for some $j_1, j_2\in [2k-1]$. Note that $j_1\neq j_2$, since otherwise we get repeated vertices in the cycle $\widetilde{C}$. Without loss of generality, let $j_1<j_2$. Let $c'=c(b^{(1)}_{2k}a^{(1)}_1)=c(b^{(2)}_{2k}a^{(2)}_1)$. Let $xy$ be the other edge in step $j_2$ with $x\in A_1$ and $y\in B_1$. Then $c(xy)=c'$. Let $\ell$ be the minimum value such that there is an edge of color $c'$ in $H_{\ell}$. So $\ell\leq j_1<j_2<2k$.

If $xy\notin E(H_{j_2-1})$, then we get two new color repetitions in step $j_2$. Thus $H_{2k}$ has at least $2k-m_o-m_e+1$ color repetitions. We can add $p-(2k-m_o-m_e+1)$ additional edges of color $c_0$ (and some additional vertices if necessary) to $H_{2k}$, forming a subgraph of $K_{p,p}$ with at least $p$ color repetitions, a contradiction.

If $xy\in E(H_{j_2-1})$ and $j_2$ is odd, then (2$^o$) applies in step $j_2$. Since $b^{(2)}_{2k}a^{(2)}_1\in E(H_{j_2})\setminus E(H_{j_2-1})$, we have $a^{(2)}_1=a^{(2)}_{j_2}$. Thus $|A_{j_2}\setminus A_{j_2-2}|\leq 1$. Then $|A_{2k}|\leq 2k-2m_o-1=2k-m_o-m_e$, a contradiction.

Hence, $xy\in E(H_{j_2-1})$ and $j_2$ is even. We first claim that for any even $i\in [2k-2]$, if we have (3$^e$) in step $i$, then we also have (3$^o$) in step $i+1$. Indeed, if we have (3$^e$) in step $i$ but not (3$^o$) in step $i+1$, then $A_{i+1}=A_{i-1}$ and $|A_{2k}|\leq 2k-2m_o-2=2k-m_o-m_e-1$. This is a contradiction. Moreover, recall that $m_o=m_e-1$ and (3$^e$) applies in step $2k$. Thus (3$^e$) applies in step $i$ if and only if (3$^o$) applies in step $i+1$ for any even $i\in [2k-2]$. Since $b^{(2)}_{2k}a^{(2)}_1\notin E(H_{j_2-1})$, (3$^e$) does not  apply in step $j_2$. Hence, (3$^o$) does not apply in step $j_2+1$. Since $b^{(2)}_{2k}a^{(2)}_1\in E(H_{j_2})\setminus E(H_{j_2-1})$, we have  $a^{(2)}_1=a^{(2)}_{j_2+1}$. Thus $|A_{j_2+1}\setminus A_{j_2-1}|\leq 1$. Then $|A_{2k}|\leq 2k-2m_o-1=2k-m_o-m_e$. This contradiction completes the proof.
\end{proof}

Let $1\leq a\leq s$, $1\leq b\leq t$ and $0\leq m\leq ab-2$. It is easy to see that $r(K_{n,n}, K_{a,b}, ab-m)\leq r(K_{n,n}, K_{s,t}, st-m)$. Combining this fact and Theorem~\ref{th:C_p}, we obtain the following result.

\begin{corollary}\label{co:Kss+1} For any integer $p\geq 2$, we have $r(K_{n,n}, K_{p,p+1}, p(p+1)-p+1)=\Omega\left(n^{2-\frac{2}{\lfloor p/2\rfloor}}\right)$.
\end{corollary}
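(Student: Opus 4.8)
The plan is to deduce this directly from Theorem~\ref{th:C_p} together with the elementary monotonicity fact stated immediately above the corollary: for $1\leq a\leq s$, $1\leq b\leq t$ and $0\leq m< ab$, one has $r(K_{n,n}, K_{a,b}, ab-m)\leq r(K_{n,n}, K_{s,t}, st-m)$. (This inequality is itself immediate, since a $(K_{s,t}, st-m)$-coloring of $K_{n,n}$ restricts to a coloring in which every copy of $K_{a,b}$ extends to a copy of $K_{s,t}$, forcing at least $st-m-(st-ab)=ab-m$ colors on the smaller subgraph; it is assumed here.)

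First I would apply this fact with $a=b=p$, $s=p$, $t=p+1$, and $m=p-1$. The hypotheses hold for every $p\geq 2$: indeed $1\leq p\leq p$, $1\leq p\leq p+1$, and $0< p-1< p^{2}$. A short computation gives $ab-m=p^{2}-(p-1)=p^{2}-p+1$ and $st-m=p(p+1)-(p-1)=p^{2}+1=p(p+1)-p+1$, so the fact yields
\[
r\big(K_{n,n}, K_{p,p}, p^{2}-p+1\big)\;\leq\; r\big(K_{n,n}, K_{p,p+1}, p(p+1)-p+1\big).
\]
Then I would invoke Theorem~\ref{th:C_p}, which asserts $r(K_{n,n}, K_{p,p}, p^{2}-p+1)=\Omega\big(n^{2-2/\lfloor p/2\rfloor}\big)$, and chain the two to conclude $r\big(K_{n,n}, K_{p,p+1}, p(p+1)-p+1\big)=\Omega\big(n^{2-2/\lfloor p/2\rfloor}\big)$, as claimed.

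There is essentially no obstacle here: the content is entirely carried by Theorem~\ref{th:C_p}, and the only thing to verify is the bookkeeping that the parameter $m=p-1$ matches on both sides and lies in the admissible range. The proof is two lines once those identities are checked.
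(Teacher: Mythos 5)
Your proposal is correct and coincides with the paper's own argument: the paper derives Corollary~\ref{co:Kss+1} precisely by combining the monotonicity fact $r(K_{n,n}, K_{a,b}, ab-m)\leq r(K_{n,n}, K_{s,t}, st-m)$ (with $a=b=s=p$, $t=p+1$, $m=p-1$) with Theorem~\ref{th:C_p}. Your bookkeeping that $ab-m=p^2-p+1$ and $st-m=p(p+1)-p+1$ is exactly the verification needed, so nothing is missing.
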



\section{An advanced technique}\label{sec:adv}

In the above proof of Theorem~\ref{th:C_p}, our extension of the Color Energy Method enables us to consider the edges of a $(2k)$-cycle $\widetilde{C}$ one-by-one, and use the existence of additional edges with the same color to confirm the presence of a desired $K_{p,p}$. Unfortunately, this technique is not advanced enough to prove Theorems~\ref{th:r=2theta} and \ref{th:r3theta}. However, we can build on the recently developed enhanced version of the Color Energy Method due to Balogh, English, Heath and Krueger \cite{BEHK}. In \cite{BEHK}, they developed a framework for studying $f(n,p,q)$. In this section, we modify their framework and show how we can use it to prove results on $r(K_{n,n}, K_{s,t}, q)$, in particular Theorems~\ref{th:r=2theta} and \ref{th:r3theta}.

Let $G=G(A, B)$ be a copy of $K_{n,n}$ with an edge-coloring $c\colon\, E(K_{n,n})\to C(G)$. Assume that the pruned $r$th color energy graph $\widetilde{G}^r$ of $G$ exists. For every $k\in [r]$, we define the \emph{$k$th coordinate map} $\pi_k$ as follows. If $\vec{x}=(x_1, x_2, \ldots, x_r)$ is a vertex of $\widetilde{G}^r$, then $\pi_k(\vec{x})\colonequals x_k$. If $\vec{x}\vec{y}$ is an edge of $\widetilde{G}^r$, then $\pi_k(\vec{x}\vec{y})\colonequals \pi_k(\vec{x})\pi_k(\vec{y})$, hence $\pi_k(\vec{x}\vec{y})$ is an edge of $G$. If $\widetilde{V}$ is a subset of $V(\widetilde{G}^r)$, then $\pi_k(\widetilde{V})\colonequals \{\pi_k(\vec{x})\colon\, \vec{x}\in \widetilde{V}\}$. If $\widetilde{G}$ is a subgraph of $\widetilde{G}^r$, then $\pi_k(\widetilde{G})$ is a subgraph of $G$ with vertex set $\pi_k(V(\widetilde{G}))$ and edge set $\{\pi_k(\vec{e})\colon\, \vec{e}\in E(\widetilde{G})\}$. Moreover, for any structure $\sigma$ in $\widetilde{G}^r$ (where $\sigma$ could indicate a vertex, an edge, a vertex set or a subgraph), let $\pi(\sigma)\colonequals \bigcup_{k\in [r]}\pi_k(\sigma)$. Finally, for any subgraph $F\subseteq G$ and any structure $\sigma$ in $\widetilde{G}^r$, let $F \cup \pi(\sigma)$ be the subgraph of $G$ obtained from $F$ by adding $\pi(\sigma)$ (here we note again that some vertices or edges of $\pi(\sigma)$ may already be in $F$, and we do not add such vertices or edges repeatedly).

Let $H\subseteq G$ and $\widetilde{G}\subseteq \widetilde{G}^r$ such that $\widetilde{G}$ contains no isolated vertices and $|E(\widetilde{G})|=m$. An ordering $\vec{u}_1\vec{v}_1, \vec{u}_2\vec{v}_2, \ldots, \vec{u}_m\vec{v}_m$ of $E(\widetilde{G})$ is called \emph{$H$-compatible} if $\pi(\vec{u}_1)\subseteq V(H)$ and $\pi(\vec{u}_i)\subseteq V(H\cup (\bigcup^{i-1}_{j=1}\pi(\vec{u}_j\vec{v}_j)))$ for each $i\in \{2, \ldots, m\}$. Let $H_0, H_1, H_2, \ldots, H_m$ be a sequence of graphs such that $H_0\colonequals H$ and $H_i\colonequals H_{i-1}\cup \pi(\vec{u}_i\vec{v}_i)$ for each $i\in [m]$. Note that $H_m= H\cup \pi(\widetilde{G})$. Recall that $\widetilde{G}^r$ is a bipartite graph with partite sets $A_1\times \cdots \times A_r$ and $B_1\times \cdots \times B_r$. Let $I_A\colonequals \{i\in [m]\colon\, \vec{v}_i\in A_1\times \cdots \times A_r\}$ and $I_B\colonequals \{i\in [m]\colon\, \vec{v}_i\in B_1\times \cdots \times B_r\}$. Let $m_A\colonequals |I_A|$ and $m_B\colonequals |I_B|$. Note that $m_A+m_B=m$. For each $i\in [m]$ and $k\in [r]$, let
\begin{itemize}
\item $n_{i,k}\colonequals 1$ if $\pi_k(\vec{v}_i)\notin V(H_{i-1})$, and $n_{i,k}\colonequals 0$ otherwise;
\item $s_{i,k}\colonequals 1$ if $\pi_k(\vec{v}_i)\in V(H_{i-1})$ but $\pi_k(\vec{u}_i\vec{v}_i)\notin E(H_{i-1})$, and $s_{i,k}\colonequals 0$ otherwise;
\item $d_{i,k}\colonequals 1$ if $\pi_k(\vec{u}_i\vec{v}_i)\in E(H_{i-1})$, and $d_{i,k}\colonequals 0$ otherwise.
\end{itemize}
Note that $n_{i,k}+s_{i,k}+d_{i,k}=1$ for every $i\in [m]$ and $k\in [r]$. Moreover, let $n_i\colonequals \sum^{r}_{k=1}n_{i,k}$, $s_{i}\colonequals \sum^{r}_{k=1}s_{i,k}$ and $d_i\colonequals \sum^{r}_{k=1}d_{i,k}$. Note that $n_i+s_i+d_i=r$ for every $i\in [m]$. Finally, let $N_A\colonequals \sum_{i\in I_A}n_i$, $N_B\colonequals \sum_{i\in I_B}n_i$, $S_A\colonequals \sum_{i\in I_A}s_i$, $S_B\colonequals \sum_{i\in I_B}s_i$, $D_A\colonequals \sum_{i\in I_A}d_i$ and $D_B\colonequals \sum_{i\in I_B}d_i$. Note that $N_A+S_A+D_A+N_B+S_B+D_B=rm_A+rm_B=rm$.

Let $F\subseteq G$, $\widetilde{R}_A\subseteq A_1\times \cdots \times A_r$, $\widetilde{R}_B\subseteq B_1\times \cdots \times B_r$, $\vec{a}\in A_1\times \cdots \times A_r$ and $\vec{b}\in B_1\times \cdots \times B_r$. The 4-tuple $(\widetilde{R}_A, \widetilde{R}_B, \vec{a}, \vec{b})$ is called an \emph{$F$-reservoir with sources $\vec{a}$ and $\vec{b}$} if the following holds:
\begin{itemize}
\item $\pi(\vec{a}), \pi(\vec{b})\subseteq V(F)$;
\item $\vec{a}\vec{x}\in E(\widetilde{G}^r)$ for all $\vec{x}\in \widetilde{R}_B$, and $\vec{b}\vec{y}\in E(\widetilde{G}^r)$ for all $\vec{y}\in \widetilde{R}_A$;
\item $\pi(\widetilde{R}_A\cup \widetilde{R}_B)\cap V(F)=\emptyset$, and $\pi(\vec{x})\cap \pi(\vec{y})=\emptyset$ for any two distinct vertices $\vec{x}, \vec{y}\in \widetilde{R}_A\cup \widetilde{R}_B$.
\end{itemize}
Based on the above considerations, the following key lemma provides us with a tool to demonstrate the existence of a graph with a sufficient number of color repetitions.

\begin{lemma}\label{le:reservoir} Let $F\subseteq G$ with an $F$-reservoir $(\widetilde{R}_A, \widetilde{R}_B, \vec{a}, \vec{b})$, and let $D_1, D_2$ be two nonnegative integers. If $D_1\leq r|\widetilde{R}_A|$ and $D_2\leq r|\widetilde{R}_B|$, then there exists a graph $F^{\ast}\supseteq F$ such that
\begin{itemize}
\item[{\rm (i)}] $V(F^{\ast})\subseteq V(F)\cup \pi(\widetilde{R}_A\cup \widetilde{R}_B)$;
\item[{\rm (ii)}] $|V(F^{\ast})\cap A|=|V(F)\cap A|+D_1$, $|V(F^{\ast})\cap B|=|V(F)\cap B|+D_2$;
\item[{\rm (iii)}] $F^{\ast}$ has at least $\lfloor D_1(r-1)/r\rfloor+\lfloor D_2(r-1)/r\rfloor$ more color repetitions than $F$.
\end{itemize}
\end{lemma}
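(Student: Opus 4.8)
The plan is to build $F^{\ast}$ from $F$ greedily, adding at most one reservoir vertex $\vec{y}\in\widetilde{R}_A$ or $\vec{x}\in\widetilde{R}_B$ at a time and tracking the color repetitions we accumulate. Note first that because $(\widetilde{R}_A,\widetilde{R}_B,\vec{a},\vec{b})$ is an $F$-reservoir, distinct vertices of $\widetilde{R}_A\cup\widetilde{R}_B$ are coordinate-disjoint from each other and from $V(F)$; hence adding one vertex $\vec{y}=(y_1,\dots,y_r)\in\widetilde{R}_A$ introduces exactly $r$ new vertices $y_1,\dots,y_r$ into the $A$-side, and by the reservoir property $\vec{b}\vec{y}\in E(\widetilde{G}^r)$, so $c(\pi_1(\vec{b}\vec{y}))=\dots=c(\pi_r(\vec{b}\vec{y}))$, i.e.\ the $r$ edges $b_1y_1,\dots,b_ry_r$ (with $\vec{b}=(b_1,\dots,b_r)$) all receive the same color; since $\pi(\vec{b})\subseteq V(F)$, all these are bona fide new edges, and they contribute $r-1$ new color repetitions. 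Symmetrically, adding one vertex $\vec{x}\in\widetilde{R}_B$ using the edge $\vec{a}\vec{x}$ adds $r$ new $B$-vertices and $r-1$ new color repetitions.

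With this building block, the construction is as follows. First I would add $\lfloor D_1/r\rfloor$ distinct vertices of $\widetilde{R}_A$ one by one — this is possible since $D_1\le r|\widetilde{R}_A|$ forces $\lfloor D_1/r\rfloor\le|\widetilde{R}_A|$ — and then, if $r\nmid D_1$, add one further vertex $\vec{y}\in\widetilde{R}_A$ but keep only $D_1-r\lfloor D_1/r\rfloor$ of its $r$ new $A$-vertices together with the corresponding subset of the monochromatic edges $b_ky_k$. Retaining $j$ such edges with $1\le j\le r-1$ still yields $j-1\ge 0$ color repetitions among them; thus the total number of $A$-side vertices added is exactly $D_1$, the total number of $B$-side vertices added from this phase is $0$, and the number of color repetitions gained is $\bigl(\lfloor D_1/r\rfloor\bigr)(r-1)+\max\{j-1,0\}$, which one checks equals $\lfloor D_1(r-1)/r\rfloor$ by a short case analysis on the residue of $D_1$ mod $r$. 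Then I repeat the identical procedure on the $B$-side using $\widetilde{R}_B$ and the source $\vec{a}$, adding exactly $D_2$ new $B$-vertices and gaining $\lfloor D_2(r-1)/r\rfloor$ further color repetitions. Let $F^{\ast}$ be the resulting graph.

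Finally I would verify the three conclusions. Property~(i) is immediate since every vertex we ever added lies in $\pi(\widetilde{R}_A\cup\widetilde{R}_B)$. Property~(ii) follows because the $A$-phase added exactly $D_1$ new $A$-vertices and no $B$-vertices (the $\vec{b}$-endpoints already lie in $V(F)$), and the $B$-phase added exactly $D_2$ new $B$-vertices and no $A$-vertices. Property~(iii) is the sum $\lfloor D_1(r-1)/r\rfloor+\lfloor D_2(r-1)/r\rfloor$ of the two phase contributions, once one has checked that color repetitions created in the two phases are genuinely distinct (they are, since the two phases involve disjoint new edge sets), and that we never accidentally re-add an existing edge or vertex (guaranteed by the coordinate-disjointness clauses in the reservoir definition and the convention that repeated vertices/edges are not added twice).

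The only delicate point — and the step I expect to require the most care — is the handling of the non-divisible remainder: one must argue that when $r\nmid D_i$ we can indeed keep a \emph{proper, nonempty} sub-collection of the $r$ monochromatic edges attached to a single reservoir vertex so that the count of retained $A$- (resp.\ $B$-) vertices lands exactly on $D_i$ and the repetition count is exactly $\lfloor D_i(r-1)/r\rfloor$. This amounts to the elementary identity $(r-1)\lfloor D/r\rfloor+\max\{\,(D\bmod r)-1,\,0\,\}=\lfloor D(r-1)/r\rfloor$, valid for all integers $D\ge 0$ and $r\ge 2$, which I would verify by splitting into the cases $D\equiv 0$ and $D\not\equiv 0\pmod r$. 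Everything else is bookkeeping that follows directly from the reservoir axioms and Definition~\ref{def:prune energy_graph}.
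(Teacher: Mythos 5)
Your construction is exactly the paper's: write $D_i=w_ir+z_i$, attach $w_i$ full reservoir vertices (each contributing $r$ new vertices and $r-1$ repetitions via the monochromatic edges to the source) plus one partial vertex keeping $z_i$ coordinates, and check the identity $(r-1)\lfloor D/r\rfloor+\max\{(D\bmod r)-1,0\}=\lfloor D(r-1)/r\rfloor$. The proposal is correct and matches the paper's proof, supplying slightly more detail on the arithmetic that the paper dismisses as "easy to check."
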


\begin{proof} For each $j\in \{1, 2\}$, let $w_j, z_j$ be integers with $0\leq z_j\leq r-1$ such that $D_j=w_jr+z_j$. Note that $\lfloor D_j(r-1)/r\rfloor= \lfloor w_jr(r-1)/r+ z_j(r-1)/r\rfloor= w_j(r-1)+z_j-\lceil z_j/r\rceil= w_j(r-1)+z_j-\mathds{1}_{(z_j\neq 0)}$, where $\mathds{1}_{(z_j\neq 0)}\colonequals 1$ if $z_j\neq 0$, and $\mathds{1}_{(z_j\neq 0)}\colonequals 0$ otherwise. Let $w_1'=w_1+\mathds{1}_{(z_1\neq 0)}$ and $w_2'=w_2+\mathds{1}_{(z_2\neq 0)}$. We first choose $w_1'$ vertices $\vec{y}_1, \ldots, \vec{y}_{w_1'}$ from $\widetilde{R}_A$. We form a graph $F'$ by adding $\big(\bigcup_{1\leq \ell \leq w_1}\pi(\vec{y}_{\ell}\vec{b})\big)\cup \big(\bigcup_{1\leq k\leq z_1}\pi_k(\vec{y}_{w_1'}\vec{b})\big)$ to $F$. We next choose $w_2'$ vertices $\vec{x}_1, \ldots, \vec{x}_{w_2'}$ from $\widetilde{R}_B$. We form $F^{\ast}$ by adding $\big(\bigcup_{1\leq \ell \leq w_2}\pi(\vec{x}_{\ell}\vec{a})\big)\cup \big(\bigcup_{1\leq k\leq z_2}\pi_k(\vec{x}_{w_2'}\vec{a})\big)$ to $F'$. It is easy to check that $F^{\ast}$ satisfies the above three requirements.
\end{proof}

In the next subsection, we show how to prove Theorems~\ref{th:r=2theta} and \ref{th:r3theta} using the above framework, in combination with our generalization of Corr\'{a}di's Lemma and some known Tur\'{a}n numbers of theta graphs. We believe our technique may turn out to be useful in proving other results, e.g., by combining it with other (bipartite) Tur\'{a}n numbers.


\subsection{Proofs of Theorems~\ref{th:r=2theta} and \ref{th:r3theta}}

We first present our proof of Theorem~\ref{th:r3theta}.

\begin{proof}[Proof of Theorem~\ref{th:r3theta}]
Let $G=G(A,B)$ be an edge-colored $K_{n,n}$ such that every $K_{p,p}$ receives at least $p^2-(r-1)st+2$ colors (i.e., every $K_{p,p}$ receives at most $(r-1)st-2$ color repetitions). Let $A=\{a_1, a_2, \ldots, a_n\}$ and $B=\{b_1, b_2, \ldots, b_n\}$. Our goal is to prove $|C(G)|=\Omega\left(n^{\frac{r}{r-1}\left(1-\frac{1}{s}\right)}\right)$. So we may assume $|C(G)|=o\left(n^{\frac{r}{r-1}\left(1-\frac{1}{s}\right)}\right)$; otherwise we are done.

As in our earlier proofs, we first show that $G$ contains no monochromatic star, in this case no monochromatic
$K_{1,p-r}$.

\begin{claim}\label{cl:r3theta-1} $G$ contains no monochromatic copy of $K_{1,p-r}$.
\end{claim}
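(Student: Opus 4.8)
The plan is to argue by contradiction, following the same template used in the proofs of Theorem~\ref{th:Kt+} and Theorem~\ref{th:C_p}, but adapted to the $r$th color energy setting. Suppose $G$ contains a monochromatic copy of $K_{1,p-r}$, say with center $b_1\in B$ and leaves $a_1,\dots,a_{p-r}\in A$, all edges having some color $c_0$. For each $j\in[n]$ set $X_j\colonequals\{i\in C(G)\colon c(a_jb_\ell)=i \text{ for some } 2\le\ell\le n\}$, i.e. the set of colors appearing on edges from $a_j$ into $B\setminus\{b_1\}$. I would first show each $X_j$ is large: if $|X_j|$ were too small, then by pigeonhole many edges at $a_j$ repeat a color, and since $p-r\ge r$ (which holds because $p=r((s-1)t/2+1)$ and $s\ge 2r-1$, so $p$ is large) we can splice these repeated edges together with the monochromatic $K_{1,p-r}$ to build a $K_{p,p}$ — or rather a subgraph of one, then pad with extra vertices — carrying at least $(r-1)st-1$ color repetitions, contradicting the hypothesis. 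The exact threshold for ``too small'' has to be chosen so that the number of forced repetitions at $a_j$ plus $p-r-1$ (the repetitions already in the star) reaches $(r-1)st-1$; a bound of the form $|X_j|\ge (n-1)/N$ for a suitable constant $N=N(r,s,t)$ should suffice.

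Next I would bound the $r$-wise intersections: for any $r$ indices $j_1<\dots<j_r$, if $|X_{j_1}\cap\dots\cap X_{j_r}|$ is at least some constant $\ell_0$, then picking one shared color and realizing it on an edge from each $a_{j_m}$ into a common set of $\ell_0$ vertices of $B\setminus\{b_1\}$ produces $r\ell_0$ edges spanning only a handful of colors, which again (together with the star, and padding) yields a $K_{p,p}$ with $\ge (r-1)st-1$ repetitions. So $|X_{j_1}\cap\dots\cap X_{j_r}|\le\ell_0$ for all $r$-subsets. Then I apply the generalized Corrádi lemma, Lemma~\ref{le:gCorradi}, with $a=(n-1)/N$, $m=n$, and $\ell=\ell_0$: it gives $|C(G)|\ge|X_1\cup\dots\cup X_n|=\Omega\big((a^r n^{r-1}/(a n^{r-1}))^{1/(r-1)}\big)=\Omega(n)$ — more carefully, since $a=\Theta(n)$, the bound in Lemma~\ref{le:gCorradi} is $\Omega\big((n^r n^{r-1}/n^r)^{1/(r-1)}\big)=\Omega(n)$, and in fact one should get $\Omega(n)$ or better, which contradicts the standing assumption $|C(G)|=o\big(n^{\frac{r}{r-1}(1-1/s)}\big)$ since $\frac{r}{r-1}(1-\frac1s)<2$ and the lower bound here is at least linear. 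Actually I expect the intersection argument yields $|C(G)|=\Omega(n^{r/(r-1)})$ or so, comfortably beating $o(n^{\frac{r}{r-1}(1-1/s)})$; either way the contradiction closes.

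The main obstacle, as in the analogous claims, is the bookkeeping that turns a few repeated-color edges at the vertices $a_j$ into an actual $K_{p,p}$-subgraph with enough repetitions: one must verify that the relevant $a_j$'s and $b_\ell$'s together with $b_1$ and the star leaves can be packed into $p+p$ vertices (adding arbitrary fresh vertices on each side if the current set is smaller than $p$), and that the count of color repetitions — those inside the star, those among the new monochromatic edges, minus any over-counting from shared edges — reaches the threshold $(r-1)st-1$. Choosing $N$ and $\ell_0$ as explicit functions of $r,s,t$ so that all three estimates (the per-vertex bound, the intersection bound, and the final Corrádi computation) simultaneously go through is the delicate part; everything else is the routine contradiction-with-$o(\cdot)$ pattern already established earlier in the paper. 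I would also double-check the edge case where $p-r$ is small relative to the padding needed, but since $p$ grows at least linearly in $st$ while $r$ is fixed, $p-r$ is easily large enough.
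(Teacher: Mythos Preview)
Your proposal is correct and follows essentially the same route as the paper: assume a monochromatic $K_{1,p-r}$, define the color sets $X_j$, bound each $|X_j|$ from below and each $r$-wise intersection $|X_{j_1}\cap\cdots\cap X_{j_r}|$ from above by constants, then apply Lemma~\ref{le:gCorradi} to obtain $|C(G)|=\Omega(n^{r/(r-1)})$, contradicting $|C(G)|=o\big(n^{\frac{r}{r-1}(1-1/s)}\big)$. The paper makes the constants explicit---$N=(r-1)st-p+r$ and $\ell_0=\lceil h\rceil$ with $h=((r-1)st-p+r)/(r-1)$---and verifies the packing inequality $|B_{j_1}\cup\cdots\cup B_{j_r}\cup\{b_1\}|\le p$ directly, which is exactly the ``delicate part'' you flagged.
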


\begin{proof} Suppose for a contradiction that $G$ contains a monochromatic copy of $K_{1, p-r}$, say with vertex set $\{b_1, a_1, \ldots, a_{p-r}\}$. Note that such a $K_{1,p-r}$ is a subgraph of $K_{p,p}$ with $p-r-1$ color repetitions. For any $j\in [n]$, let $X_j\colonequals \{i\in C(G)\colon\, c(a_jb_{\ell})=i \mbox{ for some } 2\leq \ell \leq n\}$. We first show that $|X_j|\geq (n-1)/((r-1)st-p+r)$ for every $j\in [n]$. Otherwise there exist $(r-1)st-p+r+1$ edges of the same color between $a_j$ and $\{b_2, \ldots, b_n\}$ for some $j\in [n]$. Since $(r-1)st-p+r+2\leq p$ and $(r-1)st-p+r+p-r-1=(r-1)st-1$, in this case there is a $K_{p,p}$ receiving at least $(r-1)st-1$ color repetitions, a contradiction. We next show that $|X_{j_1}\cap \cdots \cap X_{j_r}|< \lceil h\rceil$ for every $1\leq j_1< \cdots < j_r\leq n$, where $h\colonequals ((r-1)st-p+r)/(r-1)$. Otherwise, suppose that there exist some $1\leq j_1< \cdots < j_r\leq n$ such that $C(a_{j_{1}}, B_{j_{1}})=\cdots =C(a_{j_{r}}, B_{j_{r}})$, where $B_{j_1}, \ldots, B_{j_r}\subseteq \{b_2, \ldots, b_n\}$ and $|B_{j_1}|=\cdots =|B_{j_r}|=\lceil h\rceil$. Since
\begin{align*}
 |B_{j_1}\cup \cdots \cup B_{j_r}|-p\leq &~r\lceil h\rceil-p =~r\lc\frac{(r-1)st-p+r}{r-1}\rc-p\leq~r\frac{(r-1)st-p+2r-2}{r-1}-p \\
 = &~\frac{r(2(r-1)st-(r(s-1)t+2r)+4r-4-(r-1)((s-1)t+2))}{2(r-1)} \\
 = &~\frac{r(-st+(2r-1)t-2)}{2(r-1)}\leq~\frac{-r}{r-1}\leq~-1,
\end{align*}
we have $|B_{j_1}\cup \cdots \cup B_{j_r}\cup \{b_1\}|\leq p$. Since $p-r-1+(r-1)\lceil h\rceil\geq (r-1)st-1$, there is a $K_{p,p}$ receiving at least $(r-1)st-1$ color repetitions, a contradiction. Since $|C(G)|\geq |X_1\cup X_2\cup \cdots \cup X_n|$ and by Lemma~\ref{le:gCorradi}, we have that $|C(G)|$ is greater than
\begin{align*}
 \left(\frac{((n-1)/((r-1)st-p+r))^rn^{r-1}}{((n-1)/((r-1)st-p+r))(n^{r-1}-(n-1)!/(n-r)!)+((n-1)!/(n-r)!)\lceil h\rceil}\right)^{1/(r-1)},
\end{align*}
which is $\Omega\left(n^{\frac{r}{r-1}}\right)$. This contradicts the assumption that $|C(G)|=o\left(n^{\frac{r}{r-1}\left(1-\frac{1}{s}\right)}\right)$.
\end{proof}

By the above claim, the pruned $r$th color energy graph $\widetilde{G}^r$ of $G$ exists. By Theorem~\ref{th:Tu} (ii), we have ex$(n^r, \mit\Theta$$(s, 2rs^2t))=O(n^{r(1+1/s)})$. By the lower bound (\ref{eq:color_edge}), it suffices to prove that $\widetilde{G}^r$ contains no $\mit\Theta$$(s, 2rs^2t)$. Suppose for a contradiction that $\widetilde{G}^r$ contains a copy $\widetilde{\mit\Theta}$ of $\mit\Theta$$(s, 2rs^2t)$. Since $s$ is odd, the two vertices of degree $2rs^2t$ in $\widetilde{\mit\Theta}$ are contained in distinct partite sets of the bipartition of $\widetilde{G}^r$. Let $\vec{a}\in A_1\times \cdots \times A_r$ and $\vec{b}\in B_1\times \cdots \times B_r$ be the two vertices of degree $2rs^2t$ in $\widetilde{\mit\Theta}$. Let $\widetilde{P}_1, \widetilde{P}_2, \ldots, \widetilde{P}_{2rs^2t}$ be the $2rs^2t$ paths connecting $\vec{a}$ and $\vec{b}$ in $\widetilde{\mit\Theta}$, where $\widetilde{P}_j\colonequals \vec{a}^j_0 \vec{b}^j_1 \vec{a}^j_2 \vec{b}^j_3 \cdots \vec{a}^j_{s-1} \vec{b}^j_{s}$ for each $j\in [2rs^2t]$ (here $\vec{a}^j_0\colonequals \vec{a}$ and $\vec{b}^j_s\colonequals \vec{b}$).

We first choose a sequence of $t+2st$ distinct paths $\widetilde{P}_{j_1}, \widetilde{P}_{j_2}, \ldots, \widetilde{P}_{j_{t+2st}}$ such that $\vec{b}^{j_{\alpha}}_{1}$ has no common coordinates with any vertices in $X\colonequals \{\vec{a}, \vec{b}\}\cup (\bigcup_{1\leq \ell \leq \alpha-1} V(\widetilde{P}_{j_{\ell}}))$ for all $1\leq \alpha\leq t+st$, and $\vec{a}^{j_{\beta}}_{s-1}$ has no common coordinates with any vertices in $Y\colonequals \{\vec{a}, \vec{b}\}\cup (\bigcup_{1\leq \ell \leq t} V(\widetilde{P}_{j_{\ell}}))\cup (\bigcup_{t+1\leq \ell \leq t+st}\{\vec{b}^{j_{\ell}}_{1}\}) \cup (\bigcup_{t+st+1\leq \ell \leq \beta-1}\{\vec{a}^{j_{\ell}}_{s-1}\})$ for all $t+st+1\leq \beta \leq t+2st$. We first show that such a sequence of paths exists. Note that $|X|\leq 2+(s-1)(t+st-1)< s^2t$ and $|Y|\leq 2+(s-1)t+st+st-1< s^2t$. Thus $X$ (resp., $Y$) results in fewer than $rs^2t$ possible coordinate conflicts with possible choices of $\vec{b}^{j_{\alpha}}_1$ (resp., $\vec{a}^{j_{\beta}}_{s-1}$). By Definition~\ref{def:prune energy_graph} (iii), each possible coordinate conflict resulting from $X$ (resp., $Y$) removes at most one choice for $\vec{b}^{j_{\alpha}}_1$ (resp., $\vec{a}^{j_{\beta}}_{s-1}$). Since there are $2rs^2t$ paths in total, we can find a desired sequence of paths. Without loss of generality, we may assume that $\widetilde{P}_1, \widetilde{P}_2, \ldots, \widetilde{P}_{t+2st}$ is such a sequence of paths.

Let $H$ be a subgraph of $G$ with $V(H)=\pi(\vec{a})\cup \pi(\vec{b})$ and $E(H)=\emptyset$. Let $\widetilde{R}_A=\{\vec{a}^{t+st+1}_{s-1}, \ldots, \vec{a}^{t+2st}_{s-1}\}$, $\widetilde{R}_B=\{\vec{b}^{t+1}_{1}, \ldots,$ $\vec{b}^{t+st}_{1}\}$ and $\widetilde{G}$ be the graph formed by $\widetilde{P}_1, \widetilde{P}_2, \ldots, \widetilde{P}_t$. Note that $\widetilde{G}$ is in fact a $\mit\Theta$$(s,t)$ in $\widetilde{G}^r$, and $(\widetilde{R}_A, \widetilde{R}_B, \vec{a}, \vec{b})$ is an $(H\cup \pi(\widetilde{G}))$-reservoir (see Figure~\ref{fig:reservoir}). Let $m\colonequals st$. In the following, we will use $\vec{u}_1\vec{v}_1, \vec{u}_2\vec{v}_2, \ldots, \vec{u}_m\vec{v}_m$ to denote the edges $\vec{a}\vec{b}^1_1, \vec{b}^1_1\vec{a}^1_2, \ldots, \vec{a}^1_{s-1}\vec{b}, \vec{a}\vec{b}^2_1, \vec{b}^2_1\vec{a}^2_2, \ldots,\vec{a}^2_{s-1}\vec{b}, \ldots, \vec{a}\vec{b}^t_1, \vec{b}^t_1\vec{a}^t_2, \ldots, \vec{a}^t_{s-1}\vec{b}$, respectively. Then $E(\widetilde{G})=\{\vec{u}_1\vec{v}_1, \vec{u}_2\vec{v}_2, \ldots, \vec{u}_m\vec{v}_m\}$ and this is an $H$-compatible ordering of $E(\widetilde{G})$.

\begin{figure}[htbp]
\begin{center}
\begin{tikzpicture}[scale=0.06,auto,swap]
\tikzstyle{vertex}=[circle,draw=black,fill=black]
\node[vertex,scale=0.5] (a) at (-60,0) {}; \draw (-60,6) node {$\vec{a}$};
\node[vertex,scale=0.5] (a12) at (-26,0) {}; \draw (-26,6) node {$\vec{a}^{1}_{2}$};
\node[vertex,scale=0.5] (a22) at (-13,0) {}; \draw (-13,6) node {$\vec{a}^{2}_{2}$};
\node[vertex,scale=0.5] (at2) at (2,0) {}; \draw (2,6) node {$\vec{a}^{t}_{2}$};
\node[vertex,scale=0.5] (a1s-1) at (32,0) {}; \draw (32,6) node {$\vec{a}^{1}_{s-1}$};
\node[vertex,scale=0.5] (a2s-1) at (45,0) {}; \draw (45,6) node {$\vec{a}^{2}_{s-1}$};
\node[vertex,scale=0.5] (ats-1) at (60,0) {}; \draw (60,6) node {$\vec{a}^{t}_{s-1}$};

\node[vertex,scale=0.5] (b11) at (-60,-50) {}; \draw (-60,-56) node {$\vec{b}^{1}_{1}$};
\node[vertex,scale=0.5] (b21) at (-47,-50) {}; \draw (-47,-56) node {$\vec{b}^{2}_{1}$};
\node[vertex,scale=0.5] (bt1) at (-32,-50) {}; \draw (-32,-56) node {$\vec{b}^{t}_{1}$};
\node[vertex,scale=0.5] (b13) at (-16,-50) {}; \draw (-16,-56) node {$\vec{b}^{1}_{3}$};
\node[vertex,scale=0.5] (b23) at (-3,-50) {}; \draw (-3,-56) node {$\vec{b}^{2}_{3}$};
\node[vertex,scale=0.5] (bt3) at (12,-50) {}; \draw (12,-56) node {$\vec{b}^{t}_{3}$};
\node[vertex,scale=0.5] (b) at (60,-50) {}; \draw (60,-56) node {$\vec{b}$};

\draw[dotted,thick] (-10,0)--(-1,0); \draw[dotted,thick] (48,0)--(57,0);
\draw[dotted,thick] (-44,-50)--(-35,-50); \draw[dotted,thick] (0,-50)--(9,-50);
\draw[dotted,thick] (10,0)--(24,0); \draw[dotted,thick] (20,-50)--(52,-50);

\draw[thick] (a)--(b11); \draw[thick] (a)--(b21); \draw[thick] (a)--(bt1);
\draw[thick] (b11)--(a12); \draw[thick] (b21)--(a22); \draw[thick] (bt1)--(at2);
\draw[thick] (a12)--(b13); \draw[thick] (a22)--(b23); \draw[thick] (at2)--(bt3);
\draw[thick] (a1s-1)--(b); \draw[thick] (a2s-1)--(b); \draw[thick] (ats-1)--(b);

\draw (-82,-50) circle [x radius=14cm, y radius=50mm]; \draw (-82,-51) node {$\widetilde{R}_B$};
\draw (82,0) circle [x radius=14cm,y radius=50mm]; \draw (82,1) node {$\widetilde{R}_A$};
\draw[thick] (a)--(-90,-47); \draw[thick] (a)--(-86,-47); \draw[thick] (a)--(-82,-47); \draw[thick] (a)--(-78,-47); \draw[thick] (a)--(-74,-47);
\draw[thick] (b)--(74,-3); \draw[thick] (b)--(78,-3); \draw[thick] (b)--(82,-3); \draw[thick] (b)--(86,-3); \draw[thick] (b)--(90,-3);

\draw (0,0) circle [x radius=110cm, y radius=110mm];
\draw (0,-50) circle [x radius=110cm, y radius=110mm];
\end{tikzpicture}
\caption{The graph $\mit\Theta$$(s,t)$ and the $(H\cup \pi(\widetilde{G}))$-reservoir $(\widetilde{R}_A, \widetilde{R}_B, \vec{a}, \vec{b})$.}
\label{fig:reservoir}
\end{center}
\end{figure}
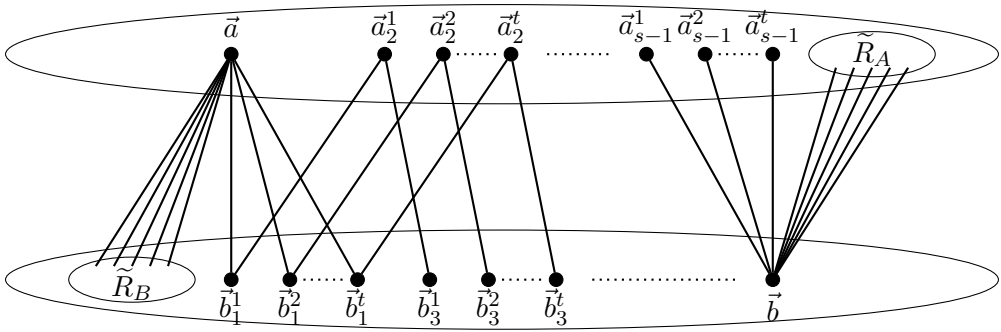

In the following, we will use the notation $I_A, I_B, m_A, m_B, n_i, s_i, d_i, N_A, N_B, S_A, S_B, D_A, D_B$ as it was introduced before Lemma~\ref{le:reservoir}. Let $F\colonequals H\cup \pi(\widetilde{G})$ and $I_B'\colonequals \{s, 2s, \ldots, ts\}\subseteq I_B$. Recall that $N_A+S_A+D_A=rm_A$ and $N_B+S_B+D_B=rm_B$. Moreover, since $\pi(\vec{b})\subseteq V(H)$, we have $n_i=0$ (and thus $s_i+d_i=r$) for all $i\in I_B'$. Thus
\begin{align}\label{al:r3theta-1}
 |V(F)\cap A|= &~|V(H)\cap A|+N_A=~|\pi(\vec{a})|+rm_A-S_A-D_A \nonumber\\
 = &~r+rt(s-1)/2-S_A-D_A=~p-S_A-D_A,
\end{align}
and
\begin{align}\label{al:r3theta-2}
 |V(F)\cap B|= &~|V(H)\cap B|+N_B=~|\pi(\vec{b})|+rm_B-S_B-D_B \nonumber\\
 = &~r+rt(s+1)/2-S_B-D_B=~p+rt-S_B-D_B \nonumber\\
 = &~p-S_B-D_B+\sum_{i\in I_B'}s_i +\sum_{i\in I_B'}d_i.
\end{align}
Furthermore, $F$ has at least
\begin{align}\label{al:r3theta-3}
 \sum_{i\in [m]}(n_i+s_i-\mathds{1}_{(d_i=0)})= &~N_A+N_B+S_A+S_B-\sum_{i\in [m]}\mathds{1}_{(d_i=0)} \nonumber\\
 = &~rst-D_A-D_B-\sum_{i\in [m]}\mathds{1}_{(d_i=0)}
\end{align}
color repetitions.

Let $\gamma\colonequals  \min\{\sum_{i\in I_B'}d_i, S_A\}$, $D_1\colonequals  D_A+\gamma$ and $D_2\colonequals  D_B-\gamma$. Note that $D_1\leq D_A+S_A\leq rm_A\leq rst= r|\widetilde{R}_A|$ and $D_2\leq D_B\leq rm_B\leq rst= r|\widetilde{R}_B|$. By Lemma~\ref{le:reservoir} and equalities~(\ref{al:r3theta-1}), (\ref{al:r3theta-2}) and (\ref{al:r3theta-3}), there exists a graph $F^{\ast}$ such that
\begin{align*}
|V(F^{\ast})\cap A|=~|V(F)\cap A|+D_1=~p-S_A-D_A+D_A+\gamma\leq p,
\end{align*}
\begin{align*}
|V(F^{\ast})\cap B|= &~|V(F)\cap B|+D_2=~p-S_B-D_B+\sum_{i\in I_B'}s_i +\sum_{i\in I_B'}d_i+D_B-\gamma \\
= &~p-\bigg(S_B-\sum_{i\in I_B'}s_i\bigg) +\sum_{i\in I_B'}d_i-\gamma,
\end{align*}
and $F^{\ast}$ has at least
\begin{align*}
~ &~rst-D_A-D_B-\sum_{i\in [m]}\mathds{1}_{(d_i=0)}+\lfloor D_1(r-1)/r\rfloor+\lfloor D_2(r-1)/r\rfloor \\
= &~rst-D_A-D_B-\sum_{i\in [m]}\mathds{1}_{(d_i=0)}+D_1-\lceil D_1/r\rceil+D_2-\lceil D_2/r\rceil \\
\geq &~rst-\sum_{i\in [m]}\mathds{1}_{(d_i=0)}-\lceil D_1/r+D_2/r\rceil-1 \\
= &~rst-\bigg(m-\sum_{i\in [m]}\mathds{1}_{(d_i\neq 0)}\bigg)-\lceil(D_A+D_B)/r\rceil-1 \\
\geq &~ rst-m-1 =~(r-1)st-1
\end{align*}
color repetitions.

Finally, we show that $|V(F^{\ast})\cap B|\leq p$, which implies that $F^{\ast}$ is a subgraph of $K_{p,p}$ receiving at least $(r-1)st-1$ color repetitions. This contradiction completes the proof. If $\sum_{i\in I_B'}d_i\leq S_A$, then $\gamma= \sum_{i\in I_B'}d_i$. Thus $|V(F^{\ast})\cap B|= p-(S_B-\sum_{i\in I_B'}s_i) +\sum_{i\in I_B'}d_i-\gamma= p-(S_B-\sum_{i\in I_B'}s_i)\leq p$. Hence, it remains to consider the case $\sum_{i\in I_B'}d_i> S_A$.

We first prove the following claim.

\begin{claim}\label{cl:r3theta-2} $S_A+S_B\geq rt$.
\end{claim}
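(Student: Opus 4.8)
\textbf{Proof proposal for Claim~\ref{cl:r3theta-2}.}

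The plan is to establish the lower bound $S_A+S_B\ge rt$ by carefully analyzing the structure of the $H$-compatible ordering $\vec u_1\vec v_1,\dots,\vec u_m\vec v_m$ of $E(\widetilde G)$, where $\widetilde G$ is the copy of $\mit\Theta(s,t)$ formed by the paths $\widetilde P_1,\dots,\widetilde P_t$. Recall that $F=H\cup\pi(\widetilde G)$ and that $H$ has no edges, so each coordinate-edge $\pi_k(\vec u_i\vec v_i)$ that appears in $F$ must have been introduced by some $\vec u_j\vec v_j$ with $j\le i$. The quantities $S_A$ (resp.\ $S_B$) count, over the $m_A$ (resp.\ $m_B$) edges whose second endpoint lies in $A^r$ (resp.\ $B^r$), the number of coordinates $k$ for which the vertex $\pi_k(\vec v_i)$ is already present in $H_{i-1}$ but the edge $\pi_k(\vec u_i\vec v_i)$ is new. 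The strategy is to find, for each of the $t$ paths, at least $r$ units of $S_A+S_B$ that can be attributed to that path, and to argue these contributions do not overlap across different paths.

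First I would fix attention on a single path $\widetilde P_j=\vec a^{\,j}_0\vec b^{\,j}_1\vec a^{\,j}_2\cdots\vec a^{\,j}_{s-1}\vec b^{\,j}_s$ with $\vec a^{\,j}_0=\vec a$ and $\vec b^{\,j}_s=\vec b$, and look at its last edge $\vec a^{\,j}_{s-1}\vec b$ (which occupies position $i=js$ in the ordering, i.e.\ an element of $I_B'$). By Definition~\ref{def:prune energy_graph}(iii) and the way the paths of $\widetilde{\mit\Theta}$ are internally disjoint, the vertex $\vec b=\vec b^{\,j}_s$ has $\pi(\vec b)\subseteq V(H)\subseteq V(H_{i-1})$ in every coordinate, so $n_i=0$ for this edge; hence each of its $r$ coordinates contributes either to $s_i$ (the edge $\pi_k(\vec a^{\,j}_{s-1}\vec b)$ is new) or to $d_i$ (it already appeared). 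So $s_i+d_i=r$. The key point is to control how often $d_{i,k}=1$ can happen: if $\pi_k(\vec a^{\,j}_{s-1}\vec b)\in E(H_{i-1})$, then since the only edges of $G$ that have been added so far are images $\pi_{k'}(\cdot)$ of edges of the already-processed portion of $\widetilde G$, and since by construction $\vec a^{\,j}_{s-1}$ and $\vec b$ are not equal in any coordinate to vertices of other paths (for the middle vertices) — more precisely, using property (iii) and the internal disjointness of the theta-paths together with the fact that $\widetilde G$ has no monochromatic $K_{1,\ell}$ — such a coincidence forces an identification of coordinates that can be bounded. I would then sum the contributions $s_i$ over $i\in I_B'$ and combine with the contributions coming from the interior edges of the paths to reach $\sum_{i\in I_B'}s_i+(\text{interior contributions})\ge rt$, which after distributing between the $A$-type and $B$-type edges gives $S_A+S_B\ge rt$. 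A cleaner route is probably the double-counting / conservation identity: since $F=H\cup\pi(\widetilde G)$ and $\widetilde G$ is a $\mit\Theta(s,t)$ with $st$ edges, we have $|V(F)\cap A|+|V(F)\cap B|=|\pi(\vec a)|+|\pi(\vec b)|+\sum_i n_i=2r+(N_A+N_B)$, and comparing this with the actual vertex count of $H\cup\pi(\widetilde G)$ (which is at most $r\cdot(\text{number of distinct }A\text{-coordinates in the theta}) + r\cdot(\dots B\dots)$, controlled because $\widetilde{\mit\Theta}$ is a genuine theta graph whose branch vertices are $\vec a,\vec b$) pins down $N_A+N_B$, hence pins down $S_A+S_B+D_A+D_B=rst-(N_A+N_B)$; then a separate bound on $D_A+D_B$ (the number of repeated coordinate-edges, which is small because consecutive theta-paths share no interior vertices and property (iii) limits coincidences) yields $S_A+S_B\ge rt$.

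The main obstacle I anticipate is the bookkeeping that shows $D_A+D_B$ is small enough — equivalently, that not too many coordinate-edges $\pi_k(\vec u_i\vec v_i)$ can coincide with edges already placed. This requires exploiting Definition~\ref{def:prune energy_graph}(iii) (common-neighbor vertices differ in all coordinates) together with the specific internally-disjoint structure of the theta graph $\widetilde{\mit\Theta}$ and the choice of ordering, to rule out "accidental" collisions between coordinate-images of edges from the same path and from different paths. Once this collision-counting is done, the inequality $S_A+S_B\ge rt$ follows by the conservation identity above. I would present the argument by first recording the identity $N_A+N_B+S_A+S_B+D_A+D_B=rst$, then bounding $N_A+N_B\le r(s-1)t$ (each interior theta-vertex contributes at most $r$ new coordinates and there are $(s-1)t$ of them, while $\vec a,\vec b$ contribute none beyond $H$) with the needed strict control, and finally bounding $D_A+D_B$ via the disjointness/property-(iii) argument to close the gap.
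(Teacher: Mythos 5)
Your proposal does not close the argument, and both routes you sketch stall at the same point. The conservation identity $N_A+N_B+S_A+S_B+D_A+D_B=rst$ together with the bound $N_A+N_B\le r(s-1)t$ only yields $S_A+S_B\ge rt-(D_A+D_B)$, so your ``cleaner route'' needs $D_A+D_B=0$, or a compensating strengthening of the $N$-bound that you do not supply. But $D_A+D_B$ is not small in general: in the surrounding proof of Theorem~\ref{th:r3theta} the claim is invoked precisely in the case $\sum_{i\in I_B'}d_i>S_A$, i.e.\ exactly when repeated coordinate-edges do occur, and $D_A,D_B$ are fed into Lemma~\ref{le:reservoir} as potentially large quantities. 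Definition~\ref{def:prune energy_graph}~(iii) only forbids coordinate coincidences between vertices that share a common neighbour; it does not prevent a coordinate-edge $\pi_k(\vec{u}_i\vec{v}_i)$ from coinciding with the $k$th coordinate-image of a far-away, previously processed edge, so the ``collision-counting'' you defer to cannot drive $D_A+D_B$ to zero. Your first route has the same defect: restricting attention to the last edges $\vec{a}^{\,j}_{s-1}\vec{b}$ (the indices in $I_B'$) gives only $\sum_{i\in I_B'}s_i=rt-\sum_{i\in I_B'}d_i$, and you acknowledge the deficit must be recovered from ``interior contributions'' without giving a mechanism.

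The missing idea, which is the entire content of the paper's proof, is a first-collision argument along each path in each coordinate. Fix $j\in[t]$ and $k\in[r]$. By the way the paths $\widetilde{P}_1,\ldots,\widetilde{P}_t$ were selected, $\pi_k(\vec{b}^{j}_{1})$ is brand new when $\widetilde{P}_j$ starts being processed, while $\pi_k(\vec{b}^{j}_{s})=\pi_k(\vec{b})\in V(H)$ is old; hence there is a smallest index $g$ at which the $k$th coordinate of the $g$th vertex of $\widetilde{P}_j$ already lies in the graph built so far. At the corresponding step $i$ the endpoint $\pi_k(\vec{v}_i)$ is old, but the edge $\pi_k(\vec{u}_i\vec{v}_i)$ cannot be old: by minimality of $g$ the vertex $\pi_k(\vec{u}_i)$ was created only at the previous step, its unique incident edge so far joins it to the $k$th coordinate of the $(g-2)$nd vertex (uniqueness because the partite sets $A_1,\ldots,A_r$ and $B_1,\ldots,B_r$ are disjoint), and that coordinate differs from $\pi_k(\vec{v}_i)$ by Definition~\ref{def:prune energy_graph}~(iii), since those two vertices share the neighbour $\vec{u}_i$. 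Hence $s_{i,k}=1$, and these $rt$ contributions are counted separately in $S_A+S_B=\sum_i\sum_k s_{i,k}$ because distinct paths use distinct edge indices and distinct coordinates are summed separately. This localizes the guaranteed $S$-contributions at the new-to-old transitions instead of trying to show $D$ is small, which is why it succeeds where your bookkeeping cannot.
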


\begin{proof} Recall that for each $j\in [t]$, $\vec{b}^{j}_{1}$ has no common coordinates with any vertices in $\{\vec{a}, \vec{b}\}\cup (\bigcup_{1\leq \ell \leq j-1} V(\widetilde{P}_{\ell}))$. Thus for each $j\in [t]$ and $k\in [r]$, we have $\pi_k(\vec{b}^j_1)\notin V(H\cup (\bigcup_{1\leq \ell \leq j-1}\pi(\widetilde{P}_{\ell})))$. Let $g\in [s]$ be the smallest index such that one of $\pi_k(\vec{a}^j_g)\in V(H\cup (\bigcup_{1\leq \ell \leq j-1}\pi(\widetilde{P}_{\ell}))) \cup \{\pi(\vec{b}^j_1), \pi(\vec{a}^j_2), \ldots, \pi(\vec{b}^j_{g-1})\}$ and $\pi_k(\vec{b}^j_g)\in V(H\cup (\bigcup_{1\leq \ell \leq j-1}\pi(\widetilde{P}_{\ell}))) \cup \{\pi(\vec{b}^j_1), \pi(\vec{a}^j_2), \ldots, \pi(\vec{a}^j_{g-1})\}$ holds. Note that such index $g$ exists since $\pi_k(\vec{b}^j_s)= \pi_k(\vec{b})\in V(H)$. Assume that such $g$ appears in the $i$th edge in the ordering $\vec{u}_1\vec{v}_1, \vec{u}_2\vec{v}_2, \ldots, \vec{u}_m\vec{v}_m$. It is easy to check that $s_{i,k}=1$. Hence, $S_A+S_B\geq \sum_{j\in [t]} \sum_{k\in [r]} 1= rt$.
\end{proof}

By Claim~\ref{cl:r3theta-2}, we have $|V(F^{\ast})\cap B|= p-(S_B-\sum_{i\in I_B'}s_i) +\sum_{i\in I_B'}d_i-\gamma= p-(S_B-\sum_{i\in I_B'}s_i) +\sum_{i\in I_B'}d_i-S_A= p-(S_A+S_B)+\sum_{i\in I_B'}(s_i+d_i)\leq p-rt+rt= p$. This completes the proof of Theorem~\ref{th:r3theta}.
\end{proof}

We next present our proof of Theorem~\ref{th:r=2theta}, which is similar to the proof of Theorem~\ref{th:r3theta}. The main difference is that we use the pruned second color energy graph $\widetilde{G}^2$ instead of $\widetilde{G}^r$, and we shall utilize property (ii) in Definition~\ref{def:prune energy_graph} of the pruned color energy graph.

\begin{proof}[Proof of Theorem~\ref{th:r=2theta}]
Let $p=(s-1)t+2$. Let $G=G(A,B)$ be an edge-colored $K_{n,n}$ such that every $K_{p,p}$ receives at least $p^2-st+1$ colors (i.e., every $K_{p,p}$ receives at most $st-1$ color repetitions). Our goal is to prove $|C(G)|=\Omega\left(n^{2-2/s}\right)$. So we may assume $|C(G)|=o\left(n^{2-2/s}\right)$; otherwise we are done. Moreover, we can deduce that $G$ contains no monochromatic copy of $K_{1,p-2}$ by analogous arguments as in the proof of Claim~\ref{cl:r3theta-1}. Therefore, the pruned second color energy graph $\widetilde{G}^2$ of $G$ exists. It suffices to prove that $\widetilde{G}^2$ contains no $\mit\Theta$$(s, 4s^2t)$.

Suppose for a contradiction that $\widetilde{G}^2$ contains a copy of $\mit\Theta$$(s, 4s^2t)$. Similarly as in the proof of Theorem~\ref{th:r3theta}, we get $H$, $\widetilde{R}_A=\{\vec{a}^{t+st+1}_{s-1}, \ldots, \vec{a}^{t+2st}_{s-1}\}$, $\widetilde{R}_B=\{\vec{b}^{t+1}_{1}, \ldots, \vec{b}^{t+st}_{1}\}$ and $\widetilde{G}$. Let $F\colonequals H\cup \pi(\widetilde{G})$ and $I_B'\colonequals \{s, 2s, \ldots, ts\}\subseteq I_B$. We still have $|V(F)\cap A|=~p-S_A-D_A$, $|V(F)\cap B|=~p-S_B-D_B+\sum_{i\in I_B'}s_i +\sum_{i\in I_B'}d_i$, and $F$ has at least $2st-D_A-D_B-\sum_{i\in [m]}\mathds{1}_{(d_i=0)}$ color repetitions. Moreover, we also have $S_A+S_B\geq 2t$ by the same arguments as in the proof of Claim~\ref{cl:r3theta-2}. Let $\gamma\colonequals  \min\{\sum_{i\in I_B'}d_i, S_A\}$. We divide the rest of the proof into two cases.

\medskip\noindent
{\bf Case 1.} $D_B-\gamma$ is even.
\vspace{0.05cm}

Let $D_1\colonequals  D_A+\gamma$ and $D_2\colonequals  D_B-\gamma$. By Lemma~\ref{le:reservoir}, there exists a graph $F^{\ast}$ such that $|V(F^{\ast})\cap A|=|V(F)\cap A|+D_1\leq p$, $|V(F^{\ast})\cap B|= |V(F)\cap B|+D_2\leq p$, and $F^{\ast}$ has at least $2st-D_A-D_B-\sum_{i\in [m]}\mathds{1}_{(d_i=0)}+\lfloor D_1/2\rfloor+\lfloor D_2/2\rfloor$ color repetitions. Since $D_2=D_B-\gamma$ is even, we have
\begin{align*}
~ &~2st-D_A-D_B-\sum_{i\in [m]}\mathds{1}_{(d_i=0)}+\lfloor D_1/2\rfloor+\lfloor D_2/2\rfloor \\
= &~2st-D_A-D_B-\sum_{i\in [m]}\mathds{1}_{(d_i=0)}+\lfloor D_1/2+D_2/2\rfloor \\
= &~2st-D_A-D_B-\sum_{i\in [m]}\mathds{1}_{(d_i=0)}+\lfloor (D_A+D_B)/2\rfloor \\
= &~2st-\sum_{i\in [m]}\mathds{1}_{(d_i=0)}-\lceil(D_A+D_B)/2\rceil \\
\geq &~2st-\sum_{i\in [m]}\mathds{1}_{(d_i=0)}-\sum_{i\in [m]}\mathds{1}_{(d_i\neq 0)}=~2st-m =~st.
\end{align*}
Thus $F^{\ast}$ is a subgraph of $K_{p,p}$ receiving at least $st$ color repetitions, a contradiction.

\medskip\noindent
{\bf Case 2.} $D_B-\gamma$ is odd.
\vspace{0.05cm}

Let $D_1\colonequals  D_A+\gamma-1$ and $D_2\colonequals  D_B-\gamma-1$. By Lemma~\ref{le:reservoir}, there exists a graph $F^{\ast}$ such that $|V(F^{\ast})\cap A|=|V(F)\cap A|+D_1\leq p-1$, $|V(F^{\ast})\cap B|= |V(F)\cap B|+D_2\leq p-1$, and $F^{\ast}$ has at least
\begin{align*}
~ &~2st-D_A-D_B-\sum_{i\in [m]}\mathds{1}_{(d_i=0)}+\lfloor D_1/2\rfloor+\lfloor D_2/2\rfloor \\
= &~2st-D_A-D_B-\sum_{i\in [m]}\mathds{1}_{(d_i=0)}+\lfloor D_1/2+D_2/2\rfloor \\
= &~2st-D_A-D_B-\sum_{i\in [m]}\mathds{1}_{(d_i=0)}+\lfloor (D_A+D_B-2)/2\rfloor \\
= &~2st-\sum_{i\in [m]}\mathds{1}_{(d_i=0)}-\lceil(D_A+D_B)/2\rceil-1 \\
\geq &~2st-\sum_{i\in [m]}\mathds{1}_{(d_i=0)}-\sum_{i\in [m]}\mathds{1}_{(d_i\neq 0)}-1=~2st-m-1=~st-1
\end{align*}
color repetitions. Let $e$ be an arbitrary edge of $F^{\ast}$. By Definition~\ref{def:prune energy_graph} (ii), there are at least $\log n$ edges colored by color $c(e)$ in $G$. We choose an arbitrary edge $f\in E(G)\setminus E(F^{\ast})$ with $c(f)=c(e)$. Let $F^{\ast\ast}$ be the graph obtained by adding $f$ to $F^{\ast}$. Then $|V(F^{\ast\ast})\cap A|\leq p$, $|V(F^{\ast\ast})\cap B|\leq p$, and $F^{\ast\ast}$ has at least $st$ color repetitions, a contradiction. This completes the proof of Theorem~\ref{th:r=2theta}.
\end{proof}


\section{Thresholds for linear and quadratic functions}\label{sec:unbalanced}

In this section, we study the thresholds for linear and quadratic functions $r(K_{n,n}, K_{s,t}, q)$. Firstly, we present our proof of Theorem~\ref{th:UnbalanceLin}, which concerns the linear threshold.

\begin{proof}[Proof of Theorem~\ref{th:UnbalanceLin}]
(i) We prove the upper bound by construction. Let $\ell=\lfloor(t-1)/(q-1)\rfloor$ and $k=\lceil n/\ell\rceil$. Let $G=G(A, B)$ be a copy of $K_{n,n}$. We partition $A$ (resp., $B$) into $k$ parts $A_1, A_2, \ldots, A_k$ (resp., $B_1, B_2, \ldots, B_k$) such that $|A_i|=|B_i|=\ell$ for all $i\in [k-1]$, and $|A_k|=|B_k|=n-(k-1)\ell$. We color the edges of $G$ using colors $1, 2, \ldots, k$ such that $c(A_i, B_j)\equiv i+j-1 \pmod{k}$ for every $1\leq i\leq j\leq k$. In the resulting edge-colored $K_{n,n}$, every $K_{1,t}$ is colored by at least $\lceil t/\ell\rceil\geq q$ colors. Thus $r(K_{n,n}, K_{1,t}, q)\leq k$.

For the lower bound, let $G=G(A, B)$ be an $r$-edge-colored $K_{n,n}$ such that every $K_{1,t}$ receives at least $q$ colors. We now show that $r\geq \lceil n(q-1)/(t-1)\rceil$. We fix a vertex $a\in A$ arbitrarily, and define $x_i\colonequals |\{b\in B\colon\, c(ab)=i\}|$ for each color $i\in [r]$. Then $\sum_{i=1}^{r}x_i=n$. Since every $K_{1,t}$ receives at least $q$ colors, for any $I\subseteq [r]$ with $|I|=q-1$ we have $\sum_{i\in I}x_i\leq t-1$. On the one hand, we have $\sum_{I\subseteq [r], |I|=q-1}\sum_{i\in I}x_i\leq {r \choose q-1}(t-1)$. On the other hand, we have $\sum_{I\subseteq [r], |I|=q-1}\sum_{i\in I}x_i\geq {r-1 \choose q-2}\sum_{i=1}^{r}x_i= {r-1 \choose q-2}n$. By straightforward calculations and since $r$ is an integer, we have $r\geq \lceil n(q-1)/(t-1)\rceil$.

(ii) For the upper bound, consider the following construction. Let $G=G(A, B)$ be a copy of $K_{n,n}$. We partition $A$ (resp., $B$) into $n-t+q$ parts $A_1, A_2, \ldots, A_{n-t+q}$ (resp., $B_1, B_2, \ldots, B_{n-t+q}$) such that $|A_{i_1}|=|B_{i_1}|=2$ for all $1\leq i_1\leq t-q$, and $|A_{i_2}|=|B_{i_2}|=1$ for all $t-q+1\leq i_2\leq n-t+q$. We color the edges of $G$ using colors $1, 2, \ldots, n-t+q$ such that $c(A_i, B_j)\equiv i+j-1 \pmod{n-t+q}$ for every $1\leq i\leq j\leq n-t+q$. Since $(t+2)/2\leq q\leq t$, every $K_{1,t}$ is colored by at least $q$ colors in the resulting edge-colored $K_{n,n}$. Thus $r(K_{n,n}, K_{1,t}, q)\leq n-t+q$.

For the lower bound, let $G=G(A, B)$ be an edge-colored $K_{n,n}$ such that every $K_{1,t}$ receives at least $q$ colors. Suppose $|C(G)|\leq n-t+q-1= n-(t-q+1)$. Let $a$ be an arbitrarily fixed vertex in $A$. Then there exists a subset $B'\subseteq B$ with $|B'|=t-q+1$ such that for any $b\in B'$, there exists $b^{\ast}\in B\setminus B'$ with $c(ab)=c(ab^{\ast})$. Note that $q\geq (t+2)/2$ implies $2(t-q+1)\leq t$. Thus there exists a $K_{1,t}$ which receives at most $t-|B'|=q-1$ colors, a contradiction. Thus $r(K_{n,n}, K_{1,t}, q)\geq n-t+q$.

(iii) The upper bounds follow from (\ref{eq:UnbalanGener}). For the lower bound, let $G$ be an edge-colored $K_{n,n}$ such that every $K_{s,t}$ receives at least $st-s-t+3$ colors. Let $T$ be the tree consisting of the disjoint union of stars $K_{1,s-1}$ and $K_{1,t-1}$ with an extra edge joining their centers. Then $T$ is an $(s+t-1)$-edge subgraph of $K_{s,t}$. Note that $G$ contains no monochromatic copy of $T$; otherwise $G$ contains a copy of $K_{s,t}$ receiving at most $st-s-t+2$ colors. Thus $|C(G)|\geq n^2/\mbox{ex}(n,n,T)$. A result of Sidorenko \cite{Sid89} implies that ex$(n, n, T)\leq $ ex$(2n, T)\leq 2n(|V(T)|-2)/2=(s+t-2)n$. Hence, $r(K_{n,n}, K_{s,t}, st-s-t+3)\geq n/(s+t-2)$.
\end{proof}

\begin{remark}\label{re:K1t} {\normalfont Theorem~\ref{th:UnbalanceLin} (i) implies that $r(K_{n,n}, K_{1,t}, q)=\lceil n(q-1)/(t-1)\rceil$ when $2\leq q\leq \frac{t+1}{2}$ and $(q-1) \mid (t-1)$. In particular, the exact values hold in the case $q=2$ and the case $q=3$ with odd $t$. In the case $(q-1) \nmid (t-1)$, we can prove the following slightly better lower and upper bounds:
$$r\leq r(K_{n,n}, K_{1,t}, q) \leq \lc\frac{n-t+1}{\lf (t-1)/(q-1)\rf }\rc+q-1,$$
where $r$ is the smallest positive integer satisfying
$$n{r-1 \choose q-2}\leq {r-q+2 \choose q-1}(q-1)\lf\frac{t-1}{q-1}\rf+ \left({r \choose q-1}-{r-q+2 \choose q-1}\right)(t-1).$$
Before providing our proof of these lower and upper bounds, we remark that these improved bounds imply that $r(K_{n,n}, K_{1,t}, q)=\lceil 2(n-1)/(t-2)\rceil$ in the case $q=3$ with even $t$. We first prove the lower bound. Let $G=G(A, B)$ be an $r$-edge-colored $K_{n,n}$ such that every $K_{1,t}$ receives at least $q$ colors. Similarly as in the proof of Theorem~\ref{th:UnbalanceLin} (i), we can define $x_i\colonequals |\{b\in B\colon\, c(ab)=i\}|$ for arbitrarily fixed vertex $a\in A$ and each $i\in [r]$. Without loss of generality, we may assume that $x_1\geq x_2\geq \cdots \geq x_r$. Since $(q-1) \nmid (t-1)$, we have $x_{q-1}\leq \lfloor(t-1)/(q-1)\rfloor$; otherwise there is a $(q-1)$-colored $K_{1,t}$. Thus for any $I\subseteq \{q-1, \ldots, r\}$ with $|I|=q-1$, we have $\sum_{i\in I}x_i\leq (q-1)\lfloor(t-1)/(q-1)\rfloor$. By double counting $\sum_{I\subseteq [r], |I|=q-1}\sum_{i\in I}x_i$, we can prove the lower bound. We next prove the upper bound by modifying the construction in the proof of Theorem~\ref{th:UnbalanceLin} (i). Let $\ell=\lfloor(t-1)/(q-1)\rfloor$, $\bar{\ell}=\lceil(t-1)/(q-1)\rceil$ and $k=\lceil (n-t+1)/\ell\rceil+q-1$. Let $t-1\equiv m \pmod{q-1}$, where $0\leq m\leq q-2$. We partition $A$ (resp., $B$) into $k$ parts $A_1, A_2, \ldots, A_k$ (resp., $B_1, B_2, \ldots, B_k$) such that $|A_{i_1}|=|B_{i_1}|=\bar{\ell}$ for all $1\leq i_1\leq m$, $|A_{i_2}|=|B_{i_2}|=\ell$ for all $m+1\leq i_2\leq k-1$, and $|A_k|=|B_k|=n-(k-1)\ell-m$. Note that $|A_k|=|B_k|\leq \ell$. We color the edges using colors $1, 2, \ldots, k$ such that $c(A_i, B_j)\equiv i+j-1 \pmod{k}$ for every $1\leq i\leq j\leq k$. In the resulting edge-colored $K_{n,n}$, every $K_{1,t}$ is colored by at least $q$ colors. This completes the proof of the upper bound.
}
\end{remark}

Next, we prove Theorem~\ref{th:UnbalanceQu3+}, which provides a lower bound on $r(K_{n,n}, K_{s,t}, q)$ for $q=st-\lfloor(s+t)/2\rfloor+1$.

\begin{proof}[Proof of Theorem~\ref{th:UnbalanceQu3+}] Let $G=G(A,B)$ be an edge-colored $K_{n,n}$ such that every $K_{s,t}$ receives at least $st-\lfloor(s+t)/2\rfloor+1$ colors (i.e., at most $\lfloor(s+t)/2\rfloor-1$ color repetitions). Let $A=\{a_1, a_2, \ldots, a_n\}$ and $B=\{b_1, b_2, \ldots, b_n\}$.

Since $s+2\leq t$, we have $\lfloor(s+t)/2\rfloor+1\leq \lfloor(t-2+t)/2\rfloor+1=t$. In order to avoid a $K_{s,t}$ with at least $\lfloor(s+t)/2\rfloor$ color repetitions, there is no monochromatic star with at least $\lfloor(s+t)/2\rfloor+1$ edges.

For each color $i\in C(G)$, let $G_i$ be the subgraph of $G$ induced by color $i$. Then the maximum degree $\Delta(G_i)$ of $G_i$ satisfies $\Delta(G_i)\leq \lfloor(s+t)/2\rfloor$ for all $i\in C(G)$. Let $m=\lceil n^{2/3}\rceil$. First suppose that $G_i$ contains no matching of size $m$ for all $i\in C(G)$. Then for each $i\in C(G)$, $G_i$ has a covering\footnote{A \emph{covering} of a graph is a set of vertices which together meet all edges of the graph. K\"{o}nig's Theorem states that in any bipartite graph, the size of a minimum covering is equal to the size of a maximum matching.} with at most $m-1$ vertices, and thus $|E(G_i)|\leq (m-1)\Delta(G_i)\leq (m-1)\lfloor(s+t)/2\rfloor=O(n^{2/3})$. Then $|C(G)|=\Omega(n^2/n^{2/3})=\Omega(n^{4/3})$, and we are done. Thus we may assume that there exists a color $i$ such that $G_i$ contains a matching $M$ of size $m$, say $M=\{a_1b_1, a_2b_2, \ldots,$ $a_mb_m\}$. Let $A'=\{a_1, a_2, \ldots, a_m\}$, $B'=\{b_1, b_2, \ldots, b_m\}$ and $p\colonequals \lfloor(s+t)/2\rfloor-s+1$. We will divide the rest of the proof into two cases.

\medskip\noindent
{\bf Case 1.} $s+2\leq t\leq 3s-3$ and $(s,t)\neq (3,5)$.
\vspace{0.05cm}

In this case, we have $p+1\leq \lfloor(s+3s-3)/2\rfloor-s+2=s$. We claim that $G[A'\cup B']$ contains at most $p$ edges of color $i'$ for each $i'\in C(G)\setminus \{i\}$. Otherwise, without loss of generality, we may assume that $c(a'_1b_1')=c(a'_2b_2')=\cdots =c(a'_{p+1}b_{p+1}')=i'$ for some $i'\in C(G)\setminus \{i\}$, where $a_1', a_2', \ldots, a_{p+1}'\in \{a_1, a_2, \ldots, a_{p+1}\}$ and $b_1', b_2', \ldots, b_{p+1}'\in B'$ (note that $a_1', a_2', \ldots, a_{p+1}'$ (resp., $b_1', b_2', \ldots, b_{p+1}'$) are not necessarily pairwise distinct).

If $t\geq s+3$, then $|\{b_1, b_2, \ldots, b_s\}\cup \{b_1', b_2', \ldots, b_{p+1}'\}|\leq s+\lfloor(s+t)/2\rfloor-s+2\leq \lfloor(t-3+t)/2\rfloor+2= t$. Thus $G[\{a_1, a_2, \ldots, a_s,$ $b_1, b_2, \ldots, b_s\}\cup \{b_1', b_2', \ldots, b_{p+1}'\}]$ is a subgraph of $K_{s,t}$ with at least $s-1+p=\lfloor(s+t)/2\rfloor$ color repetitions, a contradiction. If $t=s+2$, then $s\geq 4$ since $(s,t)\neq (3,5)$. Now we have $p+1= \lfloor(s+s+2)/2\rfloor-s+2=3\leq s-1$ and $|\{b_1, b_2, \ldots, b_{s-1}\}\cup \{b_1', b_2', b_{3}'\}|\leq s-1+3=t$. Let $e=ab_1'$ be the edge in $M$ incident with $b_1'$, where $a\in A'$. Note that $\{b_1', b_2', b_{3}'\}\cap \{b_1, b_2, \ldots, b_{s-1}\}= \emptyset$; otherwise $|\{b_1, b_2, \ldots, b_{s-1}\}\cup \{b_1', b_2', b_{3}'\}|\leq t-1$, which implies that $G[\{a_1, a_2, \ldots, a_{s-1}, a_s, b_1, b_2, \ldots, b_{s-1}, b_s\}\cup \{b_1', b_2', b_{3}'\}]$ is a subgraph of $K_{s,t}$ with at least $s+1$ color repetitions, a contradiction. Thus $a\notin \{a_1, a_2, \ldots, a_{s-1}\}$. Now $G[\{a_1, a_2, \ldots, a_{s-1}, a, b_1, b_2, \ldots, b_{s-1}\}\cup \{b_1', b_2', b_{3}'\}]$ is a subgraph of $K_{s,t}$ with at least $s+1$ color repetitions, a contradiction.

Hence, $G[A'\cup B']$ contains at most $p$ edges of color $i'$ for any $i'\in C(G)\setminus \{i\}$. Now $|C(G)|\geq |C(G[A'\cup B'])|\geq (m^2-m\lfloor(s+t)/2\rfloor)/p+1=\Omega(n^{4/3})$.

\medskip\noindent
{\bf Case 2.} $t\geq 3s-2$ and $(s,t)\neq (3,7)$.
\vspace{0.05cm}

For each $1\leq j\leq m$, let $C_j\colonequals \{i\in C(G)\colon\, c(a_jb_{\ell})=i \mbox{ for some } m+1\leq \ell \leq n\}$. Recall that there is no monochromatic star with at least $\lfloor(s+t)/2\rfloor+1$ edges. Thus $|C_j|\geq (n-m)/\lfloor(s+t)/2\rfloor$ for every $j\in [m]$. Suppose that $|C_{j_1}\cap C_{j_2}\cap C_{j_3}|\geq \lceil p/2\rceil$ for some $1\leq j_1<j_2<j_3\leq m$. If $p$ is even, then $3\lceil p/2\rceil+s=3p/2+s\leq t$ by Lemma~\ref{le:UnbalanceQu3+}. Then $G$ contains a subgraph of $K_{s,t}$ with at least $2\lceil p/2\rceil+s-1=p+s-1=\lfloor(s+t)/2\rfloor$ color repetitions, a contradiction. If $p$ is odd, then $3\lceil p/2\rceil+s-1=3(p+1)/2+s-1\leq t$ by Lemma~\ref{le:UnbalanceQu3+}. Then $G$ contains a subgraph of $K_{s,t}$ with at least $2\lceil p/2\rceil+s-2=p+s-1=\lfloor(s+t)/2\rfloor$ color repetitions, a contradiction. Thus $|C_{j_1}\cap C_{j_2}\cap C_{j_3}|<\lceil p/2\rceil$ for every $1\leq j_1<j_2<j_3\leq m$. Since $|C(G)|\geq |C_1\cup C_2\cup \cdots \cup C_m|$ and by Lemma~\ref{le:gCorradi}, we have
$$|C(G)|\geq \left(\frac{((n-m)/\lfloor(s+t)/2\rfloor)^3m^2}{(3m-2)(n-m)/\lfloor(s+t)/2\rfloor+(m-1)(m-2)\lceil p/2\rceil}\right)^{1/2}=\Omega(n^{4/3}).$$
\end{proof}

Thirdly, we present our proof of Theorem~\ref{th:UnbalanceQu1}, which concerns the quadratic threshold.

\begin{proof}[Proof of Theorem~\ref{th:UnbalanceQu1}] The upper bound $O(n^2)$ is trivial in all four cases, so we only prove the lower bound $\Omega(n^2)$. We first prove $r(K_{n,n}, K_{s,t},$ $st-s+2)=\Omega(n^2)$. Let $G$ be an edge-colored $K_{n,n}$ such that every $K_{s,t}$ receives at least $st-s+2$ colors. Note that $G$ contains neither a monochromatic star $K_{1,s}$ nor a monochromatic matching $sK_2$; otherwise there is a $K_{s,t}$ receiving at most $st-s+1$ colors. For each color $i\in C(G)$, let $G_i$ be the subgraph of $G$ induced by color $i$. Since $G$ contains no monochromatic copy of $K_{1,s}$, we have $\Delta(G_i)\leq s-1$. Since $G$ contains no monochromatic copy of $sK_2$, $G_i$ has a covering of size at most $s-1$. Thus $|C(G)|\geq n^2/|E(G_i)|\geq n^2/(s-1)^2$.

Next, we give a unified proof of the lower bounds in (ii), (iii) and (iv). Set
$$r\colonequals
\left\{
   \begin{aligned}
    &\lfloor t/2\rfloor-1, & & \mbox{if $2\leq s\leq 3$},\\
    &\lfloor(s+t)/2\rfloor-3, & & \mbox{if $s\geq 4$ and at least one of $s$ and $t$ is even},\\
    &\lfloor(s+t)/2\rfloor-4, & & \mbox{if $s\geq 5$ and both $s$ and $t$ are odd}.
   \end{aligned}
   \right.$$
Let $G=G(A,B)$ be an edge-colored $K_{n,n}$ such that every $K_{s,t}$ receives at least $st-r$ colors (i.e., every $K_{s,t}$ receives at most $r$ color repetitions). Suppose for a contradiction that $|C(G)|=o(n^2)$. Let $A=\{a_1, a_2, \ldots, a_n\}$ and $B=\{b_1, b_2, \ldots, b_n\}$. For each $j\in [n]$, let $X_j\colonequals \{i\in C(G)\colon\, c(a_jb)=i \mbox{ for some } b\in B\}$ and $Y_j\colonequals \{i\in C(G)\colon\, c(b_ja)=i \mbox{ for some } a\in A\}$. Note that $C(G)=X_1\cup X_2\cup \cdots \cup X_n= Y_1\cup Y_2\cup \cdots \cup Y_n$.

\begin{claim}\label{cl:UnbalanceQu1-1} For every $j\in [n]$, we have $|X_j|\geq n/(r+1)$ and $|Y_j|\geq n/(r+1)$.
\end{claim}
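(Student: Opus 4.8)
The plan is to prove both inequalities by contradiction via a pigeonhole argument, exploiting the fact that a large monochromatic star placed inside any copy of $K_{s,t}$ already produces more than $r$ color repetitions. Note that the assumption $|C(G)|=o(n^2)$ is not needed here; the claim only uses that every $K_{s,t}$ in $G$ receives at most $r$ color repetitions, and that $n$ is large.

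First I would establish $|X_j|\geq n/(r+1)$. Suppose instead that $|X_j| < n/(r+1)$ for some $j\in[n]$. The $n$ edges of $G$ incident with $a_j$ receive colors only from $X_j$, so by pigeonhole some color $c_0$ occurs on at least $\lceil n/|X_j|\rceil$ of them; since $|X_j| < n/(r+1)$ gives $n/|X_j| > r+1$ and this count is an integer, $c_0$ occurs on at least $r+2$ of these edges. Thus $G$ contains a monochromatic star $K_{1,r+2}$ with center $a_j$ and leaf set $L\subseteq B$, $|L|=r+2$.

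The next step is to embed this star into a copy of $K_{s,t}$. The one point that needs checking is $r+2\leq t$ in each of the three cases defining $r$: if $2\leq s\leq 3$ then $r+2=\lfloor t/2\rfloor+1\leq t$ (as $t\geq 2$); if $s\geq 4$ and at least one of $s,t$ is even then $r+2=\lfloor(s+t)/2\rfloor-1\leq t$ (using $s\leq t$); and if $s\geq 5$ with both $s,t$ odd then $r+2=\lfloor(s+t)/2\rfloor-2\leq t$. Hence, for $n$ large, I can pick $A''\subseteq A$ with $a_j\in A''$ and $|A''|=s$, and $B''\subseteq B$ with $L\subseteq B''$ and $|B''|=t$; then $G[A''\cup B'']$ is a copy of $K_{s,t}$ containing the $r+2$ equally colored edges of the star, so it receives at least $r+1$ color repetitions, contradicting the hypothesis. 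Therefore $|X_j|\geq n/(r+1)$.

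The bound $|Y_j|\geq n/(r+1)$ follows by the symmetric argument: a failure produces a monochromatic $K_{1,r+2}$ centered at $b_j$ with $r+2\leq t$ leaves in $A$, which I embed into a copy of $K_{s,t}$ whose $s$-part (containing $b_j$) lies in $B$ and whose $t$-part (containing the leaves) lies in $A$; the rest is identical. I expect the only mildly delicate point of the whole proof to be the elementary case check that $r+2\leq t$ for all three values of $r$; everything else is pigeonhole together with a trivial completion of the star to a copy of $K_{s,t}$.
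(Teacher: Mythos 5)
Your proof is correct and follows essentially the same route as the paper's: the paper likewise supposes $|X_j|<n/(r+1)$, extracts $r+2\leq t$ equally colored edges at $a_j$ by pigeonhole, and completes them to a $K_{s,t}$ with at least $r+1$ color repetitions, handling $Y_j$ by symmetry. Your explicit verification that $r+2\leq t$ in each of the three cases is a detail the paper leaves implicit, but it is the same argument.
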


\begin{proof} By symmetry, we only prove the statement for $X_j$. Suppose $|X_j|< n/(r+1)$ for some $j\in [n]$. Then there exist $r+2\leq t$ edges incident with $a_j$ of the same color. Thus $G$ contains a $K_{s,t}$ receiving at least $r+1$ color repetitions, a contradiction.
\end{proof}

\begin{claim}\label{cl:UnbalanceQu1-2} For every $1\leq j_1<j_2\leq n$, we have $|X_{j_1}\cap X_{j_2}|\leq \lfloor t/2\rfloor-1$.
\end{claim}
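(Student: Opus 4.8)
The plan is to prove the bound by contradiction: assuming $|X_{j_1}\cap X_{j_2}|\ge \lfloor t/2\rfloor$ for some $j_1<j_2$, I will produce a copy of $K_{s,t}$ in $G$ that carries at least $\lfloor t/2\rfloor$ color repetitions, and then note that this exceeds the number $r$ of repetitions allowed by hypothesis. The intuition is that a color lying in both $X_{j_1}$ and $X_{j_2}$ forces one edge at $a_{j_1}$ and one edge at $a_{j_2}$ of that color, i.e.\ a built-in repetition, and that $\lfloor t/2\rfloor$ such colors can be packed into a single $K_{s,t}$ because the edges they contribute touch only $2\lfloor t/2\rfloor\le t$ vertices of $B$ (and only the two vertices $a_{j_1},a_{j_2}$ of $A$).

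Carrying this out, I would first fix distinct colors $i_1,\dots,i_{\lfloor t/2\rfloor}\in X_{j_1}\cap X_{j_2}$ and, for each $k$, choose $b_k^{(1)},b_k^{(2)}\in B$ with $c(a_{j_1}b_k^{(1)})=c(a_{j_2}b_k^{(2)})=i_k$. Put $B^{\ast}=\{b_k^{(1)},b_k^{(2)}:1\le k\le\lfloor t/2\rfloor\}$, so $|B^{\ast}|\le 2\lfloor t/2\rfloor\le t$; since $n\ge t$, enlarge $B^{\ast}$ to some $B^{\ast\ast}\subseteq B$ with $|B^{\ast\ast}|=t$, and choose $A^{\ast\ast}\subseteq A$ with $\{a_{j_1},a_{j_2}\}\subseteq A^{\ast\ast}$ and $|A^{\ast\ast}|=s$ (possible since $s\ge 2$). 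In the copy of $K_{s,t}$ spanned by $A^{\ast\ast}\cup B^{\ast\ast}$, for every $k$ the edges $a_{j_1}b_k^{(1)}$ and $a_{j_2}b_k^{(2)}$ are both present, are distinct (their $A$-endpoints differ), and both receive color $i_k$; as the $i_k$ are pairwise distinct, this copy of $K_{s,t}$ has at least $\lfloor t/2\rfloor$ color repetitions.

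To conclude I need $\lfloor t/2\rfloor>r$. In case (ii) this is immediate, since $r=\lfloor t/2\rfloor-1$. In cases (iii) and (iv) a short computation with the parity hypotheses (using $t\ge s$) shows $\lfloor t/2\rfloor>r$ still holds once $s$ is small, so the argument above applies verbatim there; but as $s$ grows, $r$ creeps up to within $\lfloor t/2\rfloor$, and then one must squeeze out a $(\lfloor t/2\rfloor+1)$-st repetition. I expect this to be the crux: the remedy should be to choose the padding vertices of $A^{\ast\ast}$ and $B^{\ast\ast}$, or possibly one extra column, so as to create an additional monochromatic coincidence, invoking the structural facts already isolated in the proof of Claim~\ref{cl:UnbalanceQu1-1} — in particular that $G$ contains no monochromatic $K_{1,r+2}$, hence every color class has maximum degree at most $r+1$ — to control which colors can occur where. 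With the bound in hand, together with its mirror image $|Y_{j_1}\cap Y_{j_2}|\le\lfloor t/2\rfloor-1$ proved the same way, Claim~\ref{cl:UnbalanceQu1-1} and Corr\'{a}di's Lemma~(Lemma~\ref{le:Corradi}) give $|C(G)|\ge|X_1\cup\cdots\cup X_n|=\Omega(n^2)$, contradicting $|C(G)|=o(n^2)$ and settling the three cases.
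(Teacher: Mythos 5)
Your argument is complete and correct for case (ii) of the theorem, where $r=\lfloor t/2\rfloor-1$: there the $\lfloor t/2\rfloor$ shared colors pack into a single $K_{s,t}$ (since $2\lfloor t/2\rfloor\le t$) and immediately give $r+1$ repetitions. But for cases (iii) and (iv) you have correctly identified the obstruction and then only gestured at a fix, and the fix you gesture at is not the right one. The deficit is not ``a $(\lfloor t/2\rfloor+1)$-st repetition'': since $r=\lfloor(s+t)/2\rfloor-3$ (or $-4$), the gap $r+1-\lfloor t/2\rfloor$ grows roughly like $s/2$, so already at $s=6$ you are short by one repetition and for large $s$ you are short by about $s/2-2$ of them. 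No choice of padding vertices in $A^{\ast\ast}$ or $B^{\ast\ast}$ can be guaranteed to supply these, because nothing in the hypotheses forces any particular monochromatic coincidence among arbitrary padding rows and columns.

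The paper closes this gap by a genuinely different mechanism that your proposal does not contain. For $s\ge4$ it proves the stronger bound $|X_{j_1}\cap X_{j_2}|<\lfloor(t-2)/2\rfloor$ by a dual argument on the column sets $Y_\ell$: assuming $|X_{j_1}\cap X_{j_2}|\ge\lfloor(t-2)/2\rfloor$ yields a small vertex set $\{a_{j_1},a_{j_2}\}\cup B'$ carrying $\lfloor(t-2)/2\rfloor$ repetitions, and then one shows that every pair $Y_{\ell_1},Y_{\ell_2}$ (with $b_{\ell_1},b_{\ell_2}$ avoiding $B'$) must satisfy $|Y_{\ell_1}\cap Y_{\ell_2}|<\lfloor(s-2)/2\rfloor$ --- for otherwise the two structures combine, using $\lfloor(t-2)/2\rfloor+\lfloor(s-2)/2\rfloor=r+1$, into a forbidden $K_{s,t}$. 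Corr\'{a}di's Lemma applied to these $Y_\ell$ then forces $|C(G)|=\Omega(n^2)$, contradicting the standing assumption $|C(G)|=o(n^2)$. In other words, the extra repetitions are harvested from the $B$-side via a second, nested application of Corr\'{a}di's Lemma inside the proof of the claim, not from the padding of a single $K_{s,t}$. Without this (or an equivalent) idea, your proof establishes the claim only for $2\le s\le 3$.
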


\begin{proof} We first consider the case $2\leq s\leq 3$. Suppose that $|X_{j_1}\cap X_{j_2}|\geq \lfloor t/2\rfloor=r+1$ for some $1\leq j_1<j_2\leq n$. Then $G$ contains a subgraph of $K_{s,t}$ with at least $r+1$ color repetitions, a contradiction.

Next, we consider the case $s\geq 4$. In this case, we shall prove that $|X_{j_1}\cap X_{j_2}|< \lfloor (t-2)/2\rfloor$ for every $1\leq j_1<j_2\leq n$. Suppose that $|X_{j_1}\cap X_{j_2}|\geq \lfloor (t-2)/2\rfloor$ for some $1\leq j_1<j_2\leq n$. This implies that there exists a subset $B'\subseteq B$ of size at most $2\lfloor (t-2)/2\rfloor$ (say $B'\subseteq \{b_{1}, b_{2}, \ldots, b_{2\lfloor (t-2)/2\rfloor}\}$) such that $G[\{a_{j_1}, a_{j_2}\}\cup B']$ has at least $\lfloor (t-2)/2\rfloor$ color repetitions. We claim that for any $2\lfloor (t-2)/2\rfloor+1\leq \ell_1< \ell_2\leq n$, we have $|Y_{\ell_1}\cap Y_{\ell_2}|< \lfloor(s-2)/2\rfloor$. Otherwise if $|Y_{\ell_1}\cap Y_{\ell_2}|\geq \lfloor(s-2)/2\rfloor$ for some $2\lfloor (t-2)/2\rfloor+1\leq \ell_1< \ell_2\leq n$, then there exists a subset $A'\subseteq A$ of size at most $2\lfloor (s-2)/2\rfloor$ such that $G[A'\cup \{b_{\ell_1}, b_{\ell_2}\}]$ has at least $\lfloor (s-2)/2\rfloor$ color repetitions. Since $2\lfloor (t-2)/2\rfloor+2\leq t$, $2\lfloor (s-2)/2\rfloor +2\leq s$ and $\lfloor (t-2)/2\rfloor+\lfloor (s-2)/2\rfloor=r+1$, we have that $G[\{a_{j_1}, a_{j_2}, b_{\ell_1}, b_{\ell_2}\}\cup A'\cup B']$ is a subgraph of $K_{s,t}$ with at least $r+1$ color repetitions, a contradiction. Thus $|Y_{\ell_1}\cap Y_{\ell_2}|< \lfloor(s-2)/2\rfloor$ for any $2\lfloor (t-2)/2\rfloor+1\leq \ell_1< \ell_2\leq n$. By Claim~\ref{cl:UnbalanceQu1-1} and Lemma~\ref{le:Corradi}, we have
$$|C(G)|\geq \left|Y_{2\lfloor (t-2)/2\rfloor+1}\cup \cdots \cup Y_n\right|> \frac{(n/(r+1))^2(n-2\lfloor (t-2)/2\rfloor)}{n/(r+1)+(n-2\lfloor (t-2)/2\rfloor-1)\lfloor(s-2)/2\rfloor}=\Omega(n^2).$$
This contradiction completes the proof of Claim~\ref{cl:UnbalanceQu1-2}.
\end{proof}
By Lemma~\ref{le:Corradi}, Claims~\ref{cl:UnbalanceQu1-1} and \ref{cl:UnbalanceQu1-2}, we have
$$|C(G)|\geq \left|X_{1}\cup \cdots \cup X_n\right|\geq \frac{(n/(r+1))^2n}{n/(r+1)+(n-1)(\lfloor t/2\rfloor-1)}=\Omega(n^2).$$
This contradiction completes the proof of Theorem~\ref{th:UnbalanceQu1}.
\end{proof}

Finally, we prove Theorem~\ref{th:UnbalanceQu2}, which provides a lower bound on $r(K_{n,n}, K_{s,t}, q)$ for $q=st-\lfloor(s+t)/2\rfloor+2$.

\begin{proof}[Proof of Theorem~\ref{th:UnbalanceQu2}] Let $p\colonequals \lfloor(s+t)/2\rfloor-1$. Let $G=G(A,B)$ be an edge-colored $K_{n,n}$ such that every $K_{s,t}$ receives at least $st-p+1$ colors. Equivalently, every $K_{s,t}$ receives at most $p-1$ color repetitions. Let $A=\{a_1, a_2, \ldots, a_n\}$ and $B=\{b_1, b_2, \ldots, b_n\}$.

For each color $i\in C(G)$, let $G_i$ be the subgraph of $G$ induced by color $i$. Since every $K_{s,t}$ receives at most $p-1$ color repetitions, there is no monochromatic star $K_{1,p+1}$. Hence, for every $i\in C(G)$, we have $\Delta(G_i)\leq p$. Let $m=\lceil n^{1/2}\rceil$. If there is no monochromatic matching of size $m$, then every $G_i$ has a covering with at most $n^{1/2}$ vertices. This implies that $|E(G_i)|\leq n^{1/2}\Delta(G_i)\leq pn^{1/2}$ for all $i\in C(G)$. Then $|C(G)|\geq n^2/(pn^{1/2})=\Omega(n^{3/2})$, and we are done. Thus there exists a monochromatic matching $M$ of size $m$, say $M=\{a_1b_1, a_2b_2, \ldots, a_{m}b_{m}\}$.

For each $1\leq j\leq m$, let $X'_j\colonequals \{i\in C(G)\colon\, c(a_jb_{\ell})=i \mbox{ for some } m+1\leq \ell \leq n\}$. Recall that there is no monochromatic star $K_{1,p+1}$. Thus $|X'_j|\geq (n-m)/p$ for all $1\leq j\leq m$. Note that $2(p-s+1)+s=2(\lfloor(s+t)/2\rfloor-s)+s\leq t$. Suppose that for some $1\leq j_1<j_2\leq m$, we have $|X'_{j_1}\cap X'_{j_2}|\geq p-s+1$, say $j_1=1$ and $j_2=2$. Then there exist $B_1, B_2\subseteq \{b_{m+1}, \ldots, b_n\}$ with $|B_1|=|B_2|=p-s+1$ and $C(a_1, B_1)=C(a_2, B_2)$. Then $\{a_1, a_2, \ldots, a_s\}\cup \{b_1, b_2, \ldots, b_s\}\cup B_1\cup B_2$ forms a subgraph of $K_{s,t}$ with at least $p-s+1+s-1=p$ color repetitions, a contradiction. Thus $|X'_{j_1}\cap X'_{j_2}|\leq p-s$ for every $1\leq j_1<j_2\leq m$. By Lemma~\ref{le:Corradi}, we have $$|C(G)|\geq |X'_1\cup X'_2\cup \cdots \cup X'_{m}|\geq \frac{((n-m)/p)^2m}{(n-m)/p + (m-1)(p-s)}=\Omega(n^{3/2}).$$
\end{proof}


\section{Proofs using and without using the Color Energy Method}\label{sec:unbalanced2}

In this section, we show that certain results can be proved using the Color Energy Method, but also without using the Color Energy Method. We demonstrate this for Theorem~\ref{th:Ksst}. Our first proof uses the Color Energy Method and is based on an idea that we got from a result of \cite{PoSh}.

\begin{proof}[First proof of Theorem~\ref{th:Ksst}] Let $G=G(A,B)$ be an edge-colored $K_{n,n}$ such that every $K_{s,st}$ receives at least $s^2t-t(s-1)+1$ colors (i.e., at most $t(s-1)-1$ color repetitions). Note that $t(s-1)+1\leq st$. In order to avoid a subgraph of $K_{s,st}$ with at least $t(s-1)$ color repetitions, we have the following two obvious facts.

\begin{fact}\label{fa:Ksst2nd1} There is no monochromatic $K_{1, t(s-1)+1}$ in $G$.
\end{fact}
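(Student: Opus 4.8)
The plan is a direct proof by contradiction that exploits the inequality $t(s-1)+1\le st$ already recorded just before the statement. First I would suppose, for a contradiction, that $G$ contains a monochromatic copy of $K_{1,t(s-1)+1}$, say with center $v$, leaf set $L$ of size $t(s-1)+1$, and with all $t(s-1)+1$ edges incident with $v$ receiving one and the same color. Viewed as a subgraph of $G$, this star uses a single color on $t(s-1)+1$ edges, so it contributes exactly $t(s-1)$ color repetitions.

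Next I would embed this star into a copy of $K_{s,st}$ inside $K_{n,n}$. By symmetry we may assume $v\in A$; the case $v\in B$ is handled identically, since a copy of $K_{s,st}$ in $K_{n,n}$ may be placed with its side of size $st$ either in $A$ or in $B$. Since $|L|=t(s-1)+1\le st\le n$ (the first inequality being the one noted in the text, equivalent to $t\ge 1$), choose $B'\subseteq B$ with $L\subseteq B'$ and $|B'|=st$, and choose $A'\subseteq A$ with $v\in A'$ and $|A'|=s$ (possible for $n\ge st\ge s$). Then $G[A'\cup B']$ is a copy of $K_{s,st}$ that contains the monochromatic star as a subgraph.

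Consequently $G[A'\cup B']$ has at least $t(s-1)$ color repetitions, which contradicts the standing hypothesis that every copy of $K_{s,st}$ in $G$ has at most $t(s-1)-1$ color repetitions; this proves the fact. I do not expect any real obstacle here: the only points needing care are the verification of $t(s-1)+1\le st$, so that the $t(s-1)+1$ leaves fit on one side of a $K_{s,st}$, and the remark that both orientations of the embedding of $K_{s,st}$ into $K_{n,n}$ are available, which is precisely why the conclusion holds irrespective of whether the center of the monochromatic star lies in $A$ or in $B$.
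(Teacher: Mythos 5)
Your argument is correct and is exactly the reasoning the paper has in mind: the paper records the inequality $t(s-1)+1\leq st$ and then declares the fact ``obvious,'' precisely because a monochromatic star with $t(s-1)+1$ edges carries $t(s-1)$ color repetitions and fits inside a copy of $K_{s,st}$, contradicting the hypothesis that every such copy has at most $t(s-1)-1$ repetitions. Your write-up just makes the embedding and the symmetry between the two orientations explicit; there is no difference in substance.
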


\begin{fact}\label{fa:Ksst2nd2} There cannot be $s$ vertices in one partite set of $G$ that are incident with the same $t$ colors.
\end{fact}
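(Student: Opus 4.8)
The plan is to argue by contradiction: assuming such a configuration exists, I will build a copy of $K_{s,st}$ with at least $t(s-1)$ color repetitions, which contradicts the standing hypothesis that every $K_{s,st}$ receives at most $t(s-1)-1$ color repetitions. By symmetry we may assume the $s$ vertices lie in $A$; denote them $a_1,\ldots,a_s$, and let $c_1,\ldots,c_t\in C(G)$ be the $t$ colors incident with each of them. By the meaning of ``incident with the same $t$ colors'', for every $i\in[s]$ and $j\in[t]$ there is a vertex $b_{ij}\in B$ with $c(a_ib_{ij})=c_j$.

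First I would gather all of these neighbours into the set $B^{\ast}\colonequals\{b_{ij}\colon i\in[s],\, j\in[t]\}$, so that $|B^{\ast}|\leq st$. Because $s$ and $t$ are fixed while $n$ tends to infinity, we have $n\geq st$, so I can enlarge $B^{\ast}$ to a set $B'\subseteq B$ with $|B'|=st$ by adjoining arbitrary vertices of $B\setminus B^{\ast}$. Then $G[\{a_1,\ldots,a_s\}\cup B']$ is a copy of $K_{s,st}$, which by hypothesis has at most $t(s-1)-1$ color repetitions.

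The decisive step is the color count. Fix any $j\in[t]$. For each $i\in[s]$ the edge $a_ib_{ij}$ belongs to the chosen $K_{s,st}$ (since $b_{ij}\in B'$) and has color $c_j$; these $s$ edges are pairwise distinct because their endpoints $a_1,\ldots,a_s$ in $A$ are distinct. Hence color $c_j$ is used at least $s$ times. Writing the number of color repetitions as $|E|-|C|=\sum_{c}(m_c-1)$, where $m_c$ denotes the multiplicity of color $c$ and the sum runs over the colors present, and retaining only the nonnegative contributions of $c_1,\ldots,c_t$, we obtain at least $\sum_{j=1}^{t}(s-1)=t(s-1)$ color repetitions, the desired contradiction. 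The argument has no genuine obstacle; the only points requiring care are the distinctness of the $s$ edges of a fixed color $c_j$ (ensured by their distinct $A$-endpoints) and the routine verification that $n\geq st$ leaves enough room to pad $B^{\ast}$ up to exactly $st$ vertices.
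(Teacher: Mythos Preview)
Your proof is correct and is precisely the natural elaboration of what the paper leaves implicit: the paper simply declares this an ``obvious fact'' needed ``in order to avoid a subgraph of $K_{s,st}$ with at least $t(s-1)$ color repetitions,'' without writing out any details. Your argument---collecting the $st$ witness edges $a_ib_{ij}$, padding their $B$-endpoints up to a set of size $st$, and noting that each of the $t$ colors contributes at least $s-1$ repetitions---is exactly the intended verification.
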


For each color $i\in C(G)$, let $m_i$ be the number of edges of color $i$ in $G$ and let $A_i\colonequals \{u\in A\colon\, c(uv)=i \mbox{ for some } v\in B\}$. By Fact~\ref{fa:Ksst2nd1}, we have $m_i\leq t(s-1)n$ and $|A_i|\geq m_i/(t(s-1))$.

For each integer $0\leq j\leq \log_2(stn)$, let $C_{j}\colonequals \{i\in C(G)\colon\, m_i\geq 2^j\}$ and $k_j\colonequals |C_j|$. Since $|E(G)|=n^2$, it is obvious that $k_j\leq n^2/2^j$. For any $j$ with $2^j\geq t(2s^{t+1}n^{t-1})^{1/t}$, we now give a better upper bound on $k_j$. If $k_j<t$, then clearly $k_j<2t^{t+1}s^tn^t/2^{jt}$ since $j\leq \log_2(stn)$. We next consider the case $k_j\geq t$. For any $t$ distinct colors $c_1, c_2, \ldots, c_t\in C_j$, we have $|A_{c_1}\cap A_{c_2}\cap \cdots \cap A_{c_t}|<s\leq (2^j/(st))^t/(2n^{t-1})$ by Fact~\ref{fa:Ksst2nd2} and since $2^j\geq t(2s^{t+1}n^{t-1})^{1/t}$. Recall that $|A_{i}|\geq m_{i}/(t(s-1))\geq 2^j/(st)$ for every $i\in C_j$. The contrapositive of Lemma~\ref{le:Erd64} implies that $k_j<2t(n/(2^j/st))^t= 2t^{t+1}s^tn^t/2^{jt}$.

Finally, we prove an upper bound on the number of edges of the $r$th color energy graph $G^r$, where $2\leq r\leq t$. Let $\ell \colonequals  \lfloor\log_2 (t(2s^{t+1}n^{t-1})^{1/t})\rfloor$. Then
\begin{align*}
 |E(G^r)|= &~\sum^{\log_2(stn)}_{j=0} \sum_{i\in C(G),~2^j\leq m_i< 2^{j+1}} m^r_i < \sum^{\log_2(stn)}_{j=0} k_j2^{(j+1)r} = \sum^{\ell}_{j=0} k_j2^{(j+1)r}+ \sum^{\log_2(stn)}_{j=\ell+1} k_j2^{(j+1)r} \\
  \leq &~\sum^{\ell}_{j=0} \frac{n^2}{2^j}2^{(j+1)r}+ \sum^{\log_2(stn)}_{j=\ell+1} \frac{2t^{t+1}s^tn^t}{2^{jt}}2^{(j+1)r} \\
  \leq &~n^22^r\left(2^0+2^{r-1}+\cdots +2^{\ell (r-1)}\right) +\sum^{\log_2(stn)}_{j=\ell+1} 2^{r+1}t^{t+1}s^tn^t2^{j(r-t)} \\
  = &~O\left(n^22^{\ell (r-1)}\right)+O\left(n^t2^{\ell (r-t)}\log_2 n\right) \\
  = &~O\left(n^{2+(1-1/t)(r-1)}\right)+O\left(n^{t+(1-1/t)(r-t)}\log_2 n\right)= O\left(n^{r+1-(r-1)/t}\right).
\end{align*}
By Proposition~\ref{prop:|C(G)|}, we have $|C(G)|=\Omega\left((n^{2r}/|E(G^r)|)^{1/(r-1)}\right)= \Omega\left((n^{2r}/n^{r+1-(r-1)/t})^{1/(r-1)}\right)$ $= \Omega\left(n^{1+1/t}\right)$. This completes the proof of Theorem~\ref{th:Ksst}.
\end{proof}

We next give an alternative proof without using the Color Energy Method. Theorem~\ref{th:Ksst} is in fact a corollary of the following lemma. The idea of this lemma comes from a result of \cite{BEHK}.

\begin{lemma}\label{le:K|U||EH|} Let $n \ll m \ll n^2$, and let $H=H(U,V)$ be a bipartite graph with $|E(H)|>|V|\geq 2$. Suppose that every subgraph of $K_{n, m}$ with $\frac{n^2}{|E(H)|-|V|}$ edges contains a copy of $H$ with $U$ in the partite set of size $n$ and $V$ in the partite set of size $m$. Then $$r\big(K_{n,n}, K_{|U|,|E(H)|}, |U||E(H)|-(|E(H)|-|V|)+1\big)> m.$$
\end{lemma}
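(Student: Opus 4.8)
The plan is to prove the contrapositive in a constructive/spoiling fashion: I will exhibit an edge-coloring of $K_{n,n}$ that is a valid $(K_{|U|,|E(H)|},q)$-coloring with $q=|U||E(H)|-(|E(H)|-|V|)+1$ using at most $m$ colors, i.e. in which every copy of $K_{|U|,|E(H)|}$ receives at most $|E(H)|-|V|-1$ color repetitions. Equivalently, assume for contradiction that $r(K_{n,n},K_{|U|,|E(H)|},q)\le m$; then there is an edge-colored $K_{n,n}$ with $|C(G)|\le m$ colors in which every $K_{|U|,|E(H)|}$ receives at least $q$ colors, and I will derive a contradiction. Actually the cleaner route (and the one I would write up) is: pick a coloring $c\colon E(K_{n,n})\to [m]$ and, thinking of the color classes as a subgraph structure, use the hypothesis on $K_{n,m}$ to locate a copy of $H$ hidden inside a single color class, which then forces too many repetitions in some $K_{|U|,|E(H)|}$.

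Here is the intended mechanism in order. First I would set up the auxiliary bipartite graph: given the edge-colored $K_{n,n}=G(A,B)$ with color set $C(G)$, $|C(G)|\le m$, build the bipartite "incidence'' graph $\Gamma$ with parts $A$ (of size $n$) and $C(G)$ (of size $\le m$), where $u\in A$ is joined to color $i$ iff $u$ is incident with an edge of color $i$; pad $C(G)$ up to exactly $m$ isolated-into-nonadjacent vertices if needed so the host is exactly $K_{n,m}$. Second, I would lower-bound $|E(\Gamma)|$: if no color class has a vertex of $A$ incident with "many'' edges of that color — more precisely if $G$ has no monochromatic $K_{1,t(s-1)+1}$-type star (here $|U|=s$, $|E(H)|$ playing the role of the star size) — then each vertex of $A$ sends edges in at least $n/(\text{something})$ distinct colors, giving $|E(\Gamma)|$ of order $n^2/(|E(H)|-|V|)$, enough to apply the hypothesis. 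I would dispose of the opposite case (a large monochromatic star exists) separately by the same Corrádi-type argument used repeatedly in the paper (as in the proof of Theorem~\ref{th:Kt+} and Theorem~\ref{th:C_p}): a big monochromatic star together with a few more same-color edges already yields a $K_{|U|,|E(H)|}$ with $\ge |E(H)|-|V|$ repetitions, contradicting the coloring, or else $|C(G)|$ is forced to be $\Omega(n^2)\gg m$.

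Third — the heart of the argument — I apply the hypothesis of the lemma to $\Gamma$ (or to the appropriate dense subgraph of it) to obtain a copy of $H=H(U,V)$ with $U\subseteq A$ and $V\subseteq C(G)$. Now I translate this back to $G$: each edge $uv$ of $H$ (with $u\in U\subseteq A$, $v\in V\subseteq C(G)$) says "$u$ is incident in $G$ with some edge of color $v$''; choose one such edge $u b_{uv}$ for each edge of $H$. This produces a set of $|E(H)|$ edges of $G$, spanning the $|U|$ vertices of $U$ on the $A$-side and at most $|E(H)|$ vertices on the $B$-side, and since only $|V|$ distinct colors appear among them, these $|E(H)|$ edges carry at least $|E(H)|-|V|$ color repetitions. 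Extend this subgraph to a full copy of $K_{|U|,|E(H)|}$ (fill in to $|U|$ vertices in $A$ and $|E(H)|$ vertices in $B$ — possible since $n$ is large relative to these fixed sizes) without destroying repetitions; this $K_{|U|,|E(H)|}$ has $\ge |E(H)|-|V|$ repetitions, i.e. at most $|U||E(H)|-(|E(H)|-|V|)<q$ colors, contradicting that every $K_{|U|,|E(H)|}$ in $G$ gets at least $q$ colors.

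\textbf{Main obstacle.} The delicate point is the bookkeeping in the translation step: when I pick representative edges $ub_{uv}$ for the edges of $H$, the chosen $B$-endpoints $b_{uv}$ need not be distinct, and distinct edges of $H$ sharing a color $v$ might map to the \emph{same} edge of $G$, which would reduce the repetition count. I expect to handle this by choosing the $b_{uv}$ greedily and by using that for a fixed $u$ the edges of color $v$ incident with $u$ number at most $t(s-1)$ (the no-large-monochromatic-star bound), and that $H$ has maximum degree bounded; a short counting argument then shows one can still select $|E(H)|$ \emph{pairwise distinct} edges of $G$ realizing all $|E(H)|$ edges of $H$, so that the number of repetitions is genuinely $\ge |E(H)|-|V|$. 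Making the quantitative hypothesis "$n\ll m\ll n^2$'' do exactly the right work in both the lower bound $|E(\Gamma)|\ge n^2/(|E(H)|-|V|)$ and in the padding/extension arguments is the other place where care is needed, but it is routine once the structure above is in place.
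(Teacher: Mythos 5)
Your proposal is correct and follows essentially the same route as the paper's proof: build the color incidence graph between $A$ and $C(G)$, use the degree bound coming from forbidding monochromatic stars with more than $|E(H)|-|V|$ edges (which alone gives $|E(\mathscr{C}(G))|\geq n^2/(|E(H)|-|V|)$), apply the hypothesis to find $H$ with $U\subseteq A$ and $V\subseteq C(G)$, and pull it back to a union of stars in $G$ carrying at least $|E(H)|-|V|$ color repetitions inside some $K_{|U|,|E(H)|}$. The ``main obstacle'' you flag dissolves immediately: two edges of $H$ at the same vertex $u\in U$ go to distinct colors of $V$, so their representative edges in $G$ are distinct, while representatives of edges at distinct vertices of $U$ have distinct $A$-endpoints, so the $|E(H)|$ chosen edges are automatically pairwise distinct (the stars are pairwise edge-disjoint) and no counting argument or Corr\'{a}di-type detour is needed.
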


\begin{proof} Let $G=G(A,B)$ be an edge-colored $K_{n,n}$ such that every $K_{|U|,|E(H)|}$ receives at least $|U||E(H)|-(|E(H)|-|V|)+1$ colors. Suppose for a contradiction that $|C(G)|\leq m$. We claim that each vertex is incident with at most $|E(H)|-|V|$ edges of the same color in $G$. Otherwise $G$ contains a subgraph of $K_{|U|,|E(H)|}$ with at least $|E(H)|-|V|$ color repetitions, which is a contradiction.

We define the color incidence graph $\mathscr{C}(G)$ of $G$ as follows. The graph $\mathscr{C}(G)$ is a bipartite graph with partite sets $A$ and $C(G)$, and $u\in A$ and $i\in C(G)$ are adjacent in $\mathscr{C}(G)$ if and only if $u$ is incident with color $i$ in $G$. From the above arguments, we have $|E(\mathscr{C}(G))|\geq \sum_{u\in A}\frac{d_G(u)}{|E(H)|-|V|}=\frac{n^2}{|E(H)|-|V|}$. Then $\mathscr{C}(G)$ contains a copy of $H$ with $U\subseteq A$ and $V\subseteq C(G)$.

Therefore, for each vertex $u\in U\subseteq A$, there is a star $S_u$ in $G$ with $d_H(u)$ edges and centered at $u$, such that each edge of $S_u$ is colored by a color in $V$. Note that for all $u\in U$, these stars are pairwise edge-disjoint. Thus $G$ contains a subgraph of $K_{|U|,|E(H)|}$ with at least $|E(H)|-|V|$ color repetitions, a contradiction.
\end{proof}

\begin{proof}[Second proof of Theorem~\ref{th:Ksst}] Let $H=H(U,V)$ be a copy of $K_{s,t}$ with $|U|=s$ and $|V|=t$. By Theorem~\ref{th:Tu} (iv), we may choose a constant $\alpha$ such that $n^2/(st-t)>\mbox{z}(\alpha n^{1+1/t}, n; t, s)$. Then every subgraph of $K_{n, \alpha n^{1+1/t}}$ with $\frac{n^2}{|E(H)|-|V|}$ edges contains a copy of $H$ with $U$ in the partite set of size $n$ and $V$ in the partite set of size $\alpha n^{1+1/t}$. By Lemma~\ref{le:K|U||EH|}, we have $r(K_{n,n}, K_{s,st}, s^2t-t(s-1)+1)> \alpha n^{1+1/t}$.
\end{proof}

\begin{remark}\label{re:Ksst} {\normalfont In both of our two proofs of Theorem~\ref{th:Ksst}, we rely on the properties of $G$ described in Facts~\ref{fa:Ksst2nd1} and \ref{fa:Ksst2nd2}. The difference is how to derive a lower bound on $|C(G)|$. In the first proof, we derive a lower bound on $|C(G)|$ by proving an upper bound on the number of edges of $G^r$. Facts~\ref{fa:Ksst2nd1} and \ref{fa:Ksst2nd2} are used to provide an upper bound on $k_j$ (and thus on $|E(G^r)|$). However, in the second proof we use bipartite Tur\'{a}n-type results to show that the larger part (corresponding to $C(G)$) of the color incidence graph should have size greater than $\alpha n^{1+1/t}$. Facts~\ref{fa:Ksst2nd1} and \ref{fa:Ksst2nd2} are related to the number of edges of the color incidence graph $\mathscr{C}(G)$ as well as the structure of the forbidden subgraph $H$.
}
\end{remark}


\section{Results obtained without using the Color Energy Method}\label{sec:not-energy}

In this section, we present our proofs of Theorems~\ref{th:2p-2}, \ref{th:Kab*} and \ref{th:induction}, which all do not use the Color Energy Method. Firstly, we prove Theorem~\ref{th:2p-2} using the bipartite Tur\'{a}n number for an even cycle.

\begin{proof}[Proof of Theorem~\ref{th:2p-2}] Let $G$ be an edge-colored $K_{n,n}$ such that every $K_{p,p}$ receives at least $p^2-2p+2$ colors. It suffices to show that every color appears $O(n^{1+1/p})$ times in $G$. Suppose that there exists a color $c_0$ which is used on $\Omega(n^{1+1/p})$ edges. Let $G'$ be the spanning subgraph of $G$ whose edge set is the set of all edges of color $c_0$ in $G$. By Theorem~\ref{th:Tu} (iii), we have ex$(n,n, C_{2p})=O(n^{1+1/p})$. We may choose a sufficiently large constant in the $\Omega(\cdot)$-notation such that $|E(G')|\geq $ ex$(n,n, C_{2p})$. Thus $G$ contains a monochromatic copy of $C_{2p}$. Then $G$ contains a $K_{p,p}$ using at most $p^2-2p+1$ colors, a contradiction.
\end{proof}

Next, we prove the following refined version of Theorem~\ref{th:Kab*}.

\begin{theorem}\label{th:Kab*+} For integers $s, t, a$ and $b$ with $2\leq a\leq s$ and $2\leq b\leq t$, we have $r(K_{n,n}, K_{s,t}, st-ab+2)\geq (1-o(1))\left(\frac{n}{\max\{a,b\}-1}\right)^{1/\min\{a,b\}}$.
\end{theorem}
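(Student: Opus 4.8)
The plan is to prove this via the standard double-counting scheme: show that each colour class of $G$ must be $K_{a,b}$-free, bound the size of such a class by the Zarankiewicz/K\H{o}v\'ari--S\'os--Tur\'an estimate of Theorem~\ref{th:Tu}(iv), and then average. First I would set up the hypothesis: let $G=G(A,B)$ be an edge-colored $K_{n,n}$ in which every copy of $K_{s,t}$ receives at least $st-ab+2$ colors, equivalently at most $ab-2$ color repetitions. The key observation is that $G$ contains no monochromatic copy of $K_{a,b}$. Indeed, a monochromatic $K_{a,b}$ consists of $ab$ edges in a single color and hence carries $ab-1$ color repetitions; since $2\le a\le s$ and $2\le b\le t$, and since $K_{n,n}$ is symmetric between its two parts, such a $K_{a,b}$ — whichever part of $K_{n,n}$ contains its side of size $a$ — extends to a copy of $K_{s,t}$ in $G$, which then carries at least $ab-1>ab-2$ color repetitions, a contradiction.

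Next, write $a'=\min\{a,b\}$ and $b'=\max\{a,b\}$. For each color $i\in C(G)$, let $G_i$ be the spanning subgraph of $K_{n,n}$ formed by the edges of color $i$. By the previous step, $G_i$ contains no copy of $K_{a',b'}$ in either orientation, in particular none with the $a'$ vertices lying in $A$; hence Theorem~\ref{th:Tu}(iv) (with $m=n$) gives $|E(G_i)|\le \mbox{z}(n,n;a',b')\le (b'-1)^{1/a'}(n-a'+1)n^{1-1/a'}+(a'-1)n\le (b'-1)^{1/a'}n^{2-1/a'}+(a'-1)n$, using $n-a'+1\le n$.

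Finally, since $\sum_{i\in C(G)}|E(G_i)|=n^2$, some color class has at least $n^2/|C(G)|$ edges, so $|C(G)|\ge n^2\big/\big((b'-1)^{1/a'}n^{2-1/a'}+(a'-1)n\big)=n^{1/a'}\big/\big((b'-1)^{1/a'}+(a'-1)n^{1/a'-1}\big)$. Because $a'\ge 2$ we have $n^{1/a'-1}=o(1)$, so the right-hand side equals $(1-o(1))\,n^{1/a'}/(b'-1)^{1/a'}=(1-o(1))\big(n/(\max\{a,b\}-1)\big)^{1/\min\{a,b\}}$, which is exactly the claimed bound. Since the whole argument is a direct application of a classical Tur\'an-type estimate, I do not expect a genuine obstacle; the only point that needs care is the orientation bookkeeping — making sure the monochromatic $K_{a,b}$-freeness is invoked in \emph{both} orientations, so that $\min\{a,b\}$ is the parameter that governs the exponent in the Zarankiewicz bound, and that the embedding $K_{a,b}\subseteq K_{s,t}$ genuinely uses the symmetry of $K_{n,n}$ together with $a\le s$ and $b\le t$.
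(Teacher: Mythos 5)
Your proposal is correct and follows essentially the same route as the paper: observe that every colour class is $K_{a,b}$-free (so that a monochromatic $K_{a,b}$ would sit inside a $K_{s,t}$ with $ab-1$ repetitions), bound each class by the K\H{o}v\'ari--S\'os--Tur\'an/Zarankiewicz estimate of Theorem~\ref{th:Tu}(iv), and average over the $n^2$ edges. The only cosmetic difference is that the paper assumes $a\leq b$ without loss of generality, whereas you track both orientations explicitly; the content is identical.
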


\begin{proof} Without loss of generality, we may assume that $a\leq b$. Let $G$ be an edge-colored $K_{n,n}$ such that every $K_{s,t}$ receives at least $st-ab+2$ colors. Then $G$ contains no monochromatic copy of $K_{a,b}$. Thus every color appears at most $\mbox{z}(n, n; a, b)$ times in $G$. By Theorem~\ref{th:Tu} (iv), we have $\mbox{z}(n, n; a, b)\leq (b-1)^{1/a}(n-a+1)n^{1-1/a}+(a-1)n$. Thus $|C(G)|\geq n^2/\mbox{z}(n, n; a, b)\geq (1-o(1))(n/(b-1))^{1/a}$.
\end{proof}

Following from a result of Axenovich, F\"{u}redi and Mubayi in \cite{AxFM}, the lower bound given in Theorem~\ref{th:Kab*+} is asymptotically sharp in the case $a=s=2$ and $b=t$. In fact, Chung and Graham \cite{ChGr75} conjectured that this lower bound is asymptotically sharp for $a=s$ and $b=t$.

Finally, we prove the following refined version of Theorem~\ref{th:induction}.

\begin{theorem}\label{th:induction+} For integers $s, t$ and $a$ with $2\leq a\leq s\leq t$ and $a(s+t-a-2)\geq 1$, we have $r\big(K_{n,n}, K_{s,t}, st-a(s+t-a-2)+1\big)\geq \big(\frac{n}{t-1}\big)^{1/a}$.
\end{theorem}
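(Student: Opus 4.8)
The plan is to prove the bound by induction on $s$, keeping $a$ and $t$ fixed, with base case $s=a$. Throughout, let $G=G(A,B)$ be an edge-colouring of $K_{n,n}$ in which every copy of $K_{s,t}$ contains at most $a(s+t-a-2)-1$ colour repetitions, and write $M_s:=a(s+t-a-2)$; the goal is $|C(G)|\geq (n/(t-1))^{1/a}$. (If $t=2$ then $a\le s\le t$ forces $a=s=t=2$, so $M_s=0$ and the hypothesis $a(s+t-a-2)\ge 1$ fails; thus we may assume $t\ge 3$, in which case $M_{a}=a(t-2)\ge 1$ and the reduction below never leaves the admissible range.)

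For the base case $s=a$ we have $M_a=a(t-2)$. A monochromatic copy of $K_{a,t}$ in $G$ is a copy of $K_{a,t}$ carrying $at-1\geq at-2a=M_a$ colour repetitions, contradicting the hypothesis; hence every colour class of $G$, viewed as a bipartite subgraph of $K_{n,n}$, is $K_{a,t}$-free. I would then bound $|C(G)|$ from below by a K\H{o}v\'{a}ri--S\'{o}s--Tur\'{a}n--type double count: for each colour $c$ and each vertex $w$ of one fixed side, writing $d_c(w)$ for the number of $c$-edges at $w$, $K_{a,t}$-freeness of the $c$-th colour class gives $\sum_w \binom{d_c(w)}{a}\leq (t-1)\binom{n}{a}$ (each $a$-subset of $A$ is the common $c$-neighbourhood source of at most $t-1$ vertices $w$). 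Summing over colours and applying convexity of $x\mapsto\binom{x}{a}$ to the $|C(G)|\cdot n$ numbers $d_c(w)$, whose total is $n^2$, yields $n|C(G)|\binom{n/|C(G)|}{a}\leq (t-1)|C(G)|\binom{n}{a}$, which rearranges to $|C(G)|^{a}\geq n/(t-1)$. This is essentially the computation behind Theorem~\ref{th:Tu}(iv), and I would carry it out with the precise binomial estimates so as to land on the clean constant $t-1$ rather than $(1-o(1))(t-1)$.

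For the inductive step, assume $s>a$ and that the statement holds for the triple $(s-1,t,a)$. It suffices to show that $G$ also satisfies the hypothesis with $s$ replaced by $s-1$, i.e. that every copy of $K_{s-1,t}$ in $G$ has at most $M_{s-1}-1=a(s+t-a-3)-1$ colour repetitions; the inductive hypothesis then delivers $|C(G)|\geq (n/(t-1))^{1/a}$ at once. Suppose instead that some copy $H$ of $K_{s-1,t}$, on $A_0\times B_0$ with $|A_0|=s-1$, $|B_0|=t$, has at least $M_{s-1}$ repetitions. For a left-vertex $v\notin A_0$, the copy of $K_{s,t}$ on $(A_0\cup\{v\})\times B_0$ has repetition count equal to that of $H$ plus $t-|C(v,B_0)\setminus C(H)|$; since this is at most $M_s-1$ and the repetition count of $H$ is at least $M_{s-1}=M_s-a$, we get $|C(v,B_0)\setminus C(H)|\geq t-a+1$ for every such $v$. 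In particular, if some $v\notin A_0$ had $|C(v,B_0)|\leq t-a$, the same identity would give a copy of $K_{s,t}$ with at least $M_s$ repetitions, a contradiction; so each $v$'s $t$ edges to $B_0$ use at least $t-a+1$ colours, at least $t-a+1$ of them new (outside $C(H)$). Counting these new colours over the $n-(s-1)$ left-vertices outside $A_0$, and bounding the number of such vertices that can share a single new colour on $B_0$ — by folding several of them, together with suitably many vertices of $A_0$, back into one copy of $K_{s,t}$ forced to exceed $M_s$ repetitions — one obtains $|C(G)|=\Omega(n)$, contradicting the assumption $|C(G)|<(n/(t-1))^{1/a}$.

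The main obstacle is precisely this last multiplicity estimate in the inductive step: the extension identity $K_{s-1,t}\to K_{s,t}$ is routine, but determining how many left-vertices can carry a common new colour on $B_0$ — equivalently, the exact way to recombine the offending left-vertices with $H$ into a single copy of $K_{s,t}$ carrying at least $M_s$ colour repetitions — is where the real work lies, and it is the only step I would not expect to be short. A secondary technical point is to push the base-case double count through to the exact constant $t-1$, which is a matter of careful handling of the binomial inequalities together with $n\gg s,t$.
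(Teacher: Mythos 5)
Your proposal has a genuine gap at its central step, and it is the one you yourself flag: the multiplicity estimate in the inductive reduction from $(s,t,a)$ to $(s-1,t,a)$. The extension identity is fine and correctly yields $|Y_v|:=|C(v,B_0)\setminus C(H)|\geq t-a+1$ for every left-vertex $v\notin A_0$, but the route from there to $|C(G)|=\Omega(n)$ does not go through. The sets $Y_v$ have constant size (at most $t$), so a Corr\'{a}di-type union bound only gives $\Omega(n)$ if each new colour lies in $O(1)$ of the sets $Y_v$; already pairwise intersections of size $1$ reduce Corr\'{a}di's bound to $O(1)$. To cap the multiplicity of a colour $c^\ast$ shared by vertices $v_1,\ldots,v_k$ you would have to fold several of them, \emph{together with} $A_0$ (whose stockpile of $\geq M_s-a$ repetitions you need), into a single copy of $K_{s,t}$ --- but a $K_{s,t}$ containing all of $A_0$ on the left admits only one additional left-vertex, and dropping vertices from $A_0$ can destroy up to $t$ repetitions per vertex, far more than the one repetition per shared colour you gain. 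A copy of $K_{s,t}$ on $\{v_1,\ldots,v_s\}\times B_0$ avoiding $A_0$ altogether carries only $s-1$ repetitions from the shared colour, which is below $M_s=a(s+t-a-2)$ already for $a=2$, $s=t=3$. So the contradiction you need is not available by the indicated recombination, and the inductive step is unproven. A secondary issue is the base case: the K\H{o}v\'{a}ri--S\'{o}s--Tur\'{a}n double count inherently loses lower-order terms (the $(a-1)n$ and $(m-a+1)$ corrections in Theorem~\ref{th:Tu}(iv)), so it yields $(1-o(1))(n/(t-1))^{1/a}$ rather than the clean constant the theorem asserts.

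The paper's proof is entirely different and much shorter: it is a direct, non-inductive, two-stage greedy pigeonhole construction. Assuming $|C(G)|<(n/(t-1))^{1/a}$, one iterates the pigeonhole on the left vertices $u_1,\ldots,u_a$ to produce nested sets $V_a\subseteq\cdots\subseteq V_1\subseteq V$ with $|C(u_i,V_a)|=1$ for all $i\in[a]$ and $|V_a|>n(t-1)/n\cdot\big(\text{after }a\text{ steps}\big)=t-1$, hence $|V_a|\geq t$; pick $V'\subseteq V_a$ of size $t$. If $a<s$, one repeats the same iteration on the other side, using $a$ vertices $v_1,\ldots,v_a\in V'$, to find $U'$ of size $s-a$ (disjoint from $u_1,\ldots,u_a$) with $|C(v_i,U')|=1$ for all $i$. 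The resulting $K_{s,t}$ on $(U'\cup\{u_1,\ldots,u_a\})\times V'$ then uses at most $a+a+(s-a)(t-a)=st-a(s+t-a-2)$ colours, contradicting the colouring hypothesis. This yields the exact constant $(n/(t-1))^{1/a}$ and never needs Tur\'{a}n-type machinery or an induction on $s$. If you want to salvage your plan, the realistic fix is to abandon the induction and adopt this direct construction.
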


\begin{proof} Let $G=G(U,V)$ be an edge-colored $K_{n,n}$ such that $|C(G)|< (n/(t-1))^{1/a}$, where $U=\{u_1, u_2, \ldots, u_n\}$ and $V=\{v_1, v_2, \ldots, v_n\}$. It suffices to show that $G$ contains a copy of $K_{s,t}$ with at most $st-a(s+t-a-2)$ colors. Note that there exists a subset $V_1\subseteq V$ with $|C(u_1, V_1)|=1$ and $|V_1|\geq n/|C(G)|> n/(n/(t-1))^{1/a}=n^{1-1/a}(t-1)^{1/a}$. Then there exists a subset $V_2\subseteq V_1$ with $|C(u_2, V_2)|=1$ and $|V_2|\geq |V_1|/|C(G)|> n^{1-1/a}(t-1)^{1/a}/(n/(t-1))^{1/a}= n^{1-2/a}(t-1)^{2/a}$. Continuing with this process, we get subsets $V_a\subseteq V_{a-1}\subseteq \cdots \subseteq V_1\subseteq V$ with $|V_a|> n^{1-a/a}(t-1)^{a/a}=t-1$ and $|C(u_i, V_a)|=1$ for all $i\in [a]$. Since $|V_a|$ is an integer, we have $|V_a|\geq t$. We choose $V'\subseteq V_a$ with $|V'|=t$ arbitrarily, say $V'=\{v_1, v_2, \ldots, v_t\}$. If $a=s$, then $G[V'\cup \{u_1, u_2, \ldots, u_a\}]$ is a $K_{s,t}$ with at most $a$ colors. Since $st-a(s+t-a-2)=2a$ in this case, we are done. Next assume $a<s$.

Let $U_0=U\setminus \{u_1, u_2, \ldots, u_a\}$. Note that there exists a subset $U_1\subseteq U_0$ with $|C(v_1, U_1)|=1$ and $|U_1|\geq |U_0|/|C(G)|$. Then there exists a subset $U_2\subseteq U_1$ with $|C(v_2, U_2)|=1$ and $|U_2|\geq |U_1|/|C(G)|\geq |U_0|/|C(G)|^2$. Continuing with this process, we get subsets $U_a\subseteq U_{a-1}\subseteq \cdots \subseteq U_1\subseteq U_0$ with $|U_a|\geq |U_0|/|C(G)|^a> (n-a)(t-1)/n\geq s-a$ and $|C(v_i, U_a)|=1$ for all $i\in [a]$. Thus we can arbitrarily choose $U'\subseteq U_a$ with $|U'|=s-a$. Then $G[U'\cup V'\cup \{u_1, u_2, \ldots, u_a\}]$ is a $K_{s,t}$ with at most $a+a+(s-a)(t-a)=st-a(s+t-a-2)$ colors. This completes the proof.
\end{proof}

\section{Concluding remarks}\label{sec:concluding}

In this paper, we studied the behavior of the function $r(K_{n,n}, K_{s,t}, q)$, which is a generalization of the multicolor bipartite Ramsey number. In particular, we showed that $r(K_{n,n}, K_{1,t}, q)$ is linear in $n$ for all $2\leq q\leq t$, and that $q=st-s-t+3$ is the threshold for linear $r(K_{n,n}, K_{s,t}, q)$ when $t\geq s\geq 2$. Moreover, we showed that the threshold for quadratic $r(K_{n,n}, K_{s,t}, q)$ is between $st-\lfloor(s+t)/2\rfloor+2$ and $st-\lfloor(s+t)/2\rfloor+4$ when $t\geq s\geq 2$. We propose the following problem and conjecture related to the threshold for linear and quadratic growth of this function. We solved this problem and conjecture for several special cases, and leave the other cases as open problems.

\begin{problem}\label{prob:lin} For $2\leq q\leq \frac{t+1}{2}$, determine the exact value of $r(K_{n,n}, K_{1,t}, q)$.
\end{problem}

\begin{conjecture}\label{conj:quad} Let $t\geq s\geq 2$ be two integers. Then $r(K_{n,n}, K_{s,t}, st-\lfloor(s+t)/2\rfloor+2)=\Theta(n^2).$
\end{conjecture}

We also improved some known lower bounds given by Axenovich, F\"{u}redi and Mubayi \cite{AxFM}, and obtained some nontrivial lower bounds for new families of triples $(s,t,q)$. Some of our proofs rely on our extension of the Color Energy Method to bipartite graphs, which is introduced in Section~\ref{sec:pre} and enhanced in Section~\ref{sec:adv}. Next we give two explanations to illustrate the differences between studying $f(n, p, q)$ and $r(K_{n,n}, K_{s,t}, q)$ using the Color Energy Method. These differences indicate why it turns out to be more difficult to study the problems for bipartite graphs.

Firstly, when applying this method to study $f(n, p, q)$, one usually needs to show the existence of a copy of $K_{p}$ with at most $q-1$ colors (i.e., at least ${p \choose 2}-q+1$ color repetitions) in an edge-colored $K_{n}$. To this end, often one first shows the existence of a $K_{p'}$ with ${p \choose 2}-q+1-r$ color repetitions, and then extends this to a desired $K_p$ by a $K_{p-p'}$ with $r$ color repetitions disjoint from this $K_{p'}$. When applying the same approach to study $r(K_{n,n}, K_{s,t}, q)$, the target would be a copy of $K_{s,t}$ with at most $q-1$ colors (i.e., at least $st-q+1$ color repetitions) in an edge-colored $K_{n,n}$. Assuming one would take a similar approach of first showing the existence of a $K_{s',t'}$ with $st-q+1-r$ color repetitions. Then the next step would be to extend this $K_{s',t'}$ to obtain the target by adding a subgraph of order $s+t-s'-t'$ with $r$ color repetitions. However, the obvious difficulty in the bipartite case is to guarantee that the resulting graph is the desired $K_{s,t}$.

Secondly, let $G$ be an edge-colored $K_n$ such that every $K_{p}$ receives at most $r$ color repetitions, where $r\leq p-3$. Then $G$ has the
nice property that it contains no monochromatic $K_{1, p-1}$. Let $G'$ be an edge-colored $K_{n,n}$ such that every $K_{s,t}$ receives at most $r$ color repetitions, where $r\leq s+t-3$. Then $G'$, however, does not necessarily have the property that it contains no monochromatic $K_{1, s+t-1}$.

In Section~\ref{sec:unbalanced2}, we showed that Theorem~\ref{th:Ksst} can be proved using the Color Energy Method, but also without using the Color Energy Method. For other results, such as Theorems~\ref{th:C_p}, \ref{th:r=2theta}, \ref{th:r3theta} and \ref{th:Kt+}, we were not able to find a proof without using the Color Energy Method.

We close this paper by a remark on Theorem~\ref{th:C_p}, which states that $r(K_{n,n}, K_{p,p}, p^2-p+1)=\Omega\left(n^{2-2/\lfloor p/2\rfloor}\right)$ for $p\geq 2$. If one can prove an upper bound on $r(K_{n,n}, K_{p,p}, p^2-p+1)$ of the form $O\left(n^{2-2/\lfloor p/2\rfloor}\right)$, then this would yield a lower bound on the Tur\'{a}n number ex$(n, C_{2k})$ of the form $\Omega\left(n^{1+1/k}\right)$. Determining good lower bounds for ex$(n, C_{2k})$ is a long-standing open problem, and it was conjectured to be $\Omega\left(n^{1+1/k}\right)$ by Erd\H{o}s and Simonovits \cite{ErSi}.

\begin{problem}\label{prob:evencycle} For any integer $p\geq 2$, is $r(K_{n,n}, K_{p,p}, p^2-p+1)=\Theta\left(n^{2-2/\lfloor p/2\rfloor}\right)$?
\end{problem}

\appendix

\section*{Appendix}

\section{Proof of Lemma~\ref{le:UnbalanceQu3+}}\label{ap:1}

For convenience, we restate Lemma~\ref{le:UnbalanceQu3+}.

\medskip\noindent{\bf Lemma~\ref{le:UnbalanceQu3+}.} {\it Let $s, t$ be integers with $s\geq 3$, $t\geq 3s-2$ and $(s,t)\neq (3,7)$. If $\big\lfloor\frac{s+t}{2}\big\rfloor-s+1$ is even, then $\frac{3}{2}\big(\big\lfloor\frac{s+t}{2}\big\rfloor-s+1\big)+s\leq t$. If $\big\lfloor\frac{s+t}{2}\big\rfloor-s+1$ is odd, then $\frac{3}{2}\big(\big\lfloor\frac{s+t}{2}\big\rfloor-s+2\big)+s-1\leq t$.}
\vspace{0.05cm}

\begin{proof} We consider four cases.

\medskip\noindent
{\bf Case 1.} $s\geq 3$ is odd and $t\geq 3s-2\geq 7$ is odd.
\vspace{0.05cm}

In this case, we have $\lfloor(s+t)/2\rfloor-s+1=(t-s+2)/2$. If $(t-s+2)/2$ is even, then $3(\lfloor(s+t)/2\rfloor-s+1)/2+s= 3(t-s+2)/4+s= (3t+s+6)/4\leq (3t+(t+2)/3+6)/4= (10t+20)/12\leq t$ unless $(s,t)=(3,9)$. It is easy to check that the lemma holds when $(s,t)=(3,9)$. If $(t-s+2)/2$ is odd, then $3(\lfloor(s+t)/2\rfloor-s+2)/2+s-1=3((t-s+2)/2+1)/2+s-1= (3t+s+8)/4\leq (3t+(t+2)/3+8)/4= (10t+26)/12\leq t$ unless $(s,t)\in\{(3,7), (3,11)\}$. It is easy to check that the lemma holds when $(s,t)=(3,11)$.

\medskip\noindent
{\bf Case 2.} $s\geq 4$ is even and $t\geq 3s-2\geq 10$ is even.
\vspace{0.05cm}

In this case, we have $\lfloor(s+t)/2\rfloor-s+1=(t-s+2)/2$. Similar to Case 1, if $(t-s+2)/2$ is even, then $3(\lfloor(s+t)/2\rfloor-s+1)/2+s\leq (10t+20)/12\leq t$. If $(t-s+2)/2$ is odd, then $3(\lfloor(s+t)/2\rfloor-s+2)/2+s-1\leq (10t+26)/12\leq t$ unless $(s,t)=(4,12)$. It is easy to check that the lemma holds when $(s,t)=(4,12)$.

\medskip\noindent
{\bf Case 3.} $s\geq 3$ is odd and $t\geq 3s-2$ is even.
\vspace{0.05cm}

In this case, we in fact have $t\geq 3s-1\geq 8$. Moreover, we have $\lfloor(s+t)/2\rfloor-s+1=(t-s+1)/2$. If $(t-s+1)/2$ is even, then $3(\lfloor(s+t)/2\rfloor-s+1)/2+s= 3(t-s+1)/4+s= (3t+s+3)/4\leq (3t+(t+1)/3+3)/4= (10t+10)/12\leq t$. If $(t-s+1)/2$ is odd, then $3(\lfloor(s+t)/2\rfloor-s+2)/2+s-1=3((t-s+1)/2+1)/2+s-1= (3t+s+5)/4\leq (3t+(t+1)/3+5)/4= (10t+16)/12\leq t$.

\medskip\noindent
{\bf Case 4.} $s\geq 4$ is even and $t\geq 3s-2$ is odd.
\vspace{0.05cm}

In this case, we in fact have $t\geq 3s-1\geq 11$. Moreover, we have $\lfloor(s+t)/2\rfloor-s+1=(t-s+1)/2$. Similar to Case 3, if $(t-s+1)/2$ is even, then $3(\lfloor(s+t)/2\rfloor-s+1)/2+s\leq (10t+10)/12\leq t$. If $(t-s+1)/2$ is odd, then $3(\lfloor(s+t)/2\rfloor-s+2)/2+s-1\leq (10t+16)/12\leq t$.
\end{proof}

\section{Threshold for $r(K_{n,n}, K_{s,t}, q)=n^2-c$}
\label{ap:O1}

The following result implies that $q=st-\lf\frac{s}{2}\rf+1$ is the threshold for $r(K_{n,n}, K_{s,t}, q)=n^2-c$.

\begin{theorem}\label{th:UnbalanceO1} Let $s, t$ and $k$ be three integers with $t\geq s\geq 2$ and $0\leq k\leq \lf\frac{s}{2}\rf-1$. For sufficiently large $n$, we have
\begin{itemize}
\item[{\rm (i)}] $r(K_{n,n}, K_{s,t}, st-k)=n^2-k$,
\item[{\rm (ii)}] $r\left(K_{n,n}, K_{s,t}, st-\lf\frac{s}{2}\rf\right)\leq n^2-\lf\frac{n}{2}\rf$,
\item[{\rm (iii)}] $r\left(K_{n,n}, K_{s,t}, st-\lf\frac{s}{2}\rf\right)= n^2-\lf\frac{n}{2}\rf$ for odd $s\geq 7$,
\item[{\rm (iv)}] $r\left(K_{n,n}, K_{s,t}, st-\lf\frac{s}{2}\rf\right)= n^2-\lc\frac{n}{2}\rc$ for even $s\geq 14$.
\end{itemize}
\end{theorem}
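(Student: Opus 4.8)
The plan is to split the four statements into the constructive upper bounds (all of (i) and (ii), and the ``$\le$'' halves of (iii) and (iv)) and the matching lower bounds (the ``$\ge$'' halves of (iii) and (iv), together with the lower half of (i)). Throughout, ``$n$ large'' is used only to pad any fixed bipartite configuration on at most $s$ vertices of one side and at most $t$ of the other into a full $K_{s,t}$.

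For (i), the upper bound comes from recolouring: start from a rainbow colouring of $K_{n,n}$ and give a single new colour to a matching $M$ of size $k+1$; this uses $n^2-k$ colours, and since any copy of $K_{s,t}$ meets $M$ in at most $k+1$ edges, it sees at most $k$ colour repetitions. For the lower bound, suppose a colouring uses at most $n^2-k-1$ colours, so $K_{n,n}$ has at least $k+1$ colour repetitions; choose $k+1$ ``excess'' edges together with, for each of them, one ``representative'' edge of the same colour. These at most $2(k+1)\le 2\lfloor s/2\rfloor\le s\le t$ edges span at most $s$ vertices of $A$ and at most $t$ of $B$, so they extend to a copy of $K_{s,t}$ in which they force at least $k+1$ repetitions, a contradiction; the inequality $2(k+1)\le s$ is exactly what pins the range to $k\le\lfloor s/2\rfloor-1$. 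For (ii) I would fix a perfect matching of $K_{n,n}$, split its $n$ edges into $\lfloor n/2\rfloor$ pairs (leaving one edge aside with its own colour if $n$ is odd), colour each pair with its own colour, and rainbow-colour the rest; this uses $n^2-\lfloor n/2\rfloor$ colours, and since a copy of $K_{s,t}$ contains a matching of size at most $\min\{s,t\}=s$, it meets at most $\lfloor s/2\rfloor$ complete monochromatic pairs, hence has at most $\lfloor s/2\rfloor$ repetitions.

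For (iii) and (iv): the upper bound in (iii) is (ii), and for (iv) with $n$ odd I would improve (ii) by recolouring the single leftover matching edge $a^{\ast}b^{\ast}$ so that it agrees with some non-matching edge at $a^{\ast}$, creating one extra ``star at an $A$-vertex'' repetition and bringing the count down to $n^2-\lceil n/2\rceil$; one checks that a copy of $K_{s,t}$ with $s$ even that uses $a^{\ast}$ sees at most this one extra repetition together with at most $(s-2)/2$ matching pairs on its remaining $s-1$ vertices of $A$, i.e.\ at most $s/2$ repetitions, whereas for odd $s$ the same count gives $\lfloor s/2\rfloor+1$, which is why (iii) cannot be pushed below $n^2-\lfloor n/2\rfloor$. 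For the lower bounds I would argue by contradiction: assuming at most $n^2-\lfloor n/2\rfloor-1$ (resp.\ $n^2-\lceil n/2\rceil-1$) colours, $K_{n,n}$ has at least $\lfloor n/2\rfloor+1$ (resp.\ $\lceil n/2\rceil+1$) colour repetitions. First reduce to a ``compact'' situation: if some colour class contains a star $K_{1,\lfloor s/2\rfloor+2}$, a matching of size $\lfloor s/2\rfloor+2$, or more generally a subgraph on at most $s$ vertices of $A$, at most $t$ vertices of $B$, and at least $\lfloor s/2\rfloor+2$ edges, it already yields a $K_{s,t}$ with at least $\lfloor s/2\rfloor+1$ repetitions; so assume no such class occurs. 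Then represent the repetitions by ``bonds'' (pairs of equally coloured edges, spanning at most two vertices on each side, and never sharing both endpoints), observe that $\lfloor n/2\rfloor+1$ pairwise vertex-disjoint disjoint-edge bonds would use more than $n$ vertices of $A$, and deduce that the bonds must overlap or include shared-endpoint bonds; from this surplus select $\lfloor s/2\rfloor+1$ (resp.\ $s/2+1$) bonds whose edges span at most $s$ vertices of $A$ and at most $t$ of $B$, and pad to a $K_{s,t}$ with too many repetitions.

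The main obstacle is precisely this selection step in the lower bounds of (iii) and (iv). Bonds of different types — a monochromatic path centred at an $A$-vertex, one centred at a $B$-vertex, a pair of disjoint monochromatic edges, or a fragment of a colour class of size at least $3$ — occupy different numbers of vertices on the two sides, and guaranteeing that $\lfloor s/2\rfloor+1$ (resp.\ $s/2+1$) of them fit on $s$ vertices of $A$ and $t$ of $B$ without wasting the repetition surplus is sensitive to the parity of $s$, to whether $n$ is even or odd, and is tightest when $s=t$ (where the $B$-side is no looser than the $A$-side). It is this parity- and boundary-case bookkeeping, rather than any averaging argument, that forces the hypotheses $s\ge 7$ for odd $s$ and $s\ge 14$ for even $s$ and makes the lower-bound part a genuine case analysis.
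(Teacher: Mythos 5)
Your upper-bound constructions are essentially the paper's: the paired-matching colouring for (ii)--(iii) is identical, your rainbow-plus-monochromatic-matching construction for (i) and your ``cherry'' variant for odd $n$ in (iv) are harmless equivalents of the paper's constructions, and your direct lower-bound argument for (i) (pick $k+1$ excess edges plus representatives, note they span at most $2(k+1)\le 2\lfloor s/2\rfloor\le s$ vertices on each side, and pad to a $K_{s,t}$ with at least $k+1$ repetitions) is correct and self-contained, where the paper instead just cites known results for the balanced case.

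The genuine gap is in the lower bounds of (iii) and (iv). You describe a strategy (assume too few colours, rule out large monochromatic configurations, encode the $\lfloor n/2\rfloor+1$ or $\lceil n/2\rceil+1$ repetitions as ``bonds'', then select $\lfloor s/2\rfloor+1$ of them fitting inside one $K_{s,t}$), but you then explicitly concede that the selection step is ``a genuine case analysis'' sensitive to the parity of $s$ and $n$, and you do not carry it out. That selection step is the entire content of these lower bounds --- it is exactly where the hypotheses $s\ge 7$ (odd) and $s\ge 14$ (even) must be used, and without it nothing beyond the trivial bound $n^2-O(n)$ is established. The paper sidesteps this completely: since $K_{s,s}\subseteq K_{s,t}$, the monotonicity property (ii) from the introduction gives $r(K_{n,n},K_{s,t},st-\lfloor s/2\rfloor)\ge r\left(K_{n,n},K_{s,s},s^2-\lfloor s/2\rfloor\right)$, and the right-hand side equals $n^2-\lfloor n/2\rfloor$ (odd $s\ge 7$) or $n^2-\lceil n/2\rceil$ (even $s\ge 14$) by Theorem~5.1 of \cite{AxFM}, which is precisely where the bookkeeping you are worried about was done. (The same observation also disposes of the lower bound in (i) in one line.) Either invoke that known balanced-case result, or actually perform the case analysis; as written, (iii) and (iv) are only half proved.
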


\begin{proof} We first prove the lower bounds. Axenovich, F\"{u}redi and Mubayi (see \cite[Theorem~5.1]{AxFM}) proved that
$$r\left(K_{n,n}, K_{s,s}, s^2-\ell\right)=
\left\{
   \begin{aligned}
    &n^2-\ell, & & \mbox{if $\ell\leq\lf\frac{s}{2}\rf-1$},\\
    &n^2-\lf\frac{n}{2}\rf, & & \mbox{if $\ell=\lf\frac{s}{2}\rf$ and $s\geq 7$ is odd},\\
    &n^2-\lc\frac{n}{2}\rc, & & \mbox{if $\ell=\lf\frac{s}{2}\rf$ and $s\geq 14$ is even}.
   \end{aligned}
   \right.$$
Moreover, since $K_{s,s}\subseteq K_{s,t}$, we have $r(K_{n,n}, K_{s,t}, st-\ell)\geq r\left(K_{n,n}, K_{s,s}, s^2-\ell\right)$ for any $0\leq \ell \leq \lfloor s/2\rfloor$. This proves the lower bounds.

We now prove the upper bound in (i). Since $0\leq k\leq \lf s/2\rf-1\leq \lf t/2\rf-1$, we have $r(K_{n,n}, K_{s,t}, st-k)\leq r\left(K_{n,n}, K_{t,t}, t^2-k\right)=n^2-k$.

Next, we prove the upper bounds in (ii), (iii) and the case that $n$ is even in (iv) by construction. Let $G=G(A, B)$ be a copy of $K_{n,n}$, where $A=\{a_1, a_2, \ldots, a_n\}$ and $B=\{b_1, b_2, \ldots, b_n\}$. We color the edges of $G$ such that $c(a_{2i-1}b_{2i-1})=c(a_{2i}b_{2i})=i$ for each $1\leq i\leq \lfloor n/2\rfloor$, and all the other edges are colored by $n^2-2\lfloor n/2\rfloor$ new distinct colors. Note that $G$ is an edge-colored $K_{n,n}$ using exactly $n^2-\lfloor n/2\rfloor$ colors, such that every $K_{s,t}$ receives at least $st-\lfloor s/2\rfloor$ distinct colors.

Finally, we prove the upper bounds in (iv) with odd $n$ by construction. Let $G=G(A, B)$ be a copy of $K_{n,n}$, where $A=\{a_1, a_2, \ldots, a_n\}$ and $B=\{b_1, b_2, \ldots, b_n\}$. We color the edges of $G$ such that $c(a_{2i-1}b_{2i-1})=c(a_{2i}b_{2i})=i$ for each $1\leq i\leq (n-1)/2$, $c(a_1b_n)=c(b_1a_n)=(n+1)/2$, and all the other edges are colored by $n^2-(n+1)$ new distinct colors. Note that $G$ is an edge-colored $K_{n,n}$ using exactly $n^2-\lceil n/2\rceil$ colors, such that every $K_{s,t}$ receives at least $st-\lfloor s/2\rfloor$ distinct colors.
\end{proof}

In the case $t\geq s+1$, we can improve Theorem~\ref{th:UnbalanceO1} (iii) and (iv) as follows.

\begin{theorem}\label{th:UnbalanceO1+} For any integers $t> s\geq 3$ and sufficiently large $n$, we have
\begin{itemize}
\item[{\rm (i)}] $r\left(K_{n,n}, K_{s,t}, st-\lf\frac{s}{2}\rf\right)= n^2-\lf\frac{n}{2}\rf$ for odd $s\geq 3$,
\item[{\rm (ii)}] $r\left(K_{n,n}, K_{s,t}, st-\lf\frac{s}{2}\rf\right)= n^2-\lc\frac{n}{2}\rc$ for even $s\geq 10$.
\end{itemize}
\end{theorem}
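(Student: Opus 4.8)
The plan is to obtain the upper bounds for free and concentrate all the work on the lower bounds, exploiting the extra column afforded by $t>s$. Indeed, the colorings built in the proof of Theorem~\ref{th:UnbalanceO1} (the near-perfect-matching coloring together with its odd-$n$ variant using $c(a_1b_n)=c(a_nb_1)$) already use exactly $n^2-\lfloor n/2\rfloor$, resp.\ $n^2-\lceil n/2\rceil$, colors and give every $K_{s,t}$ at least $st-\lfloor s/2\rfloor$ colors for all $t\ge s$, with no real restriction on $s$; so parts (i) and (ii) reduce to the matching lower bounds $r(K_{n,n},K_{s,t},st-\lfloor s/2\rfloor)\ge n^2-\lfloor n/2\rfloor$ for odd $s\ge 3$ and $\ge n^2-\lceil n/2\rceil$ for even $s\ge 10$. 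For these, suppose for contradiction that $G=G(A,B)$ is an edge-colored $K_{n,n}$ with fewer colors than claimed, so that the number of color repetitions $R\colonequals n^2-|C(G)|$ satisfies $R\ge \lfloor n/2\rfloor+1$ (odd $s$) or $R\ge \lceil n/2\rceil+1$ (even $s$); the goal is to produce a copy of $K_{s,t}$ carrying at least $\lfloor s/2\rfloor+1$ repetitions.

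First I would clean out the large monochromatic structures. If some color class $G_i$ contained a star $K_{1,\lfloor s/2\rfloor+2}$, then, since $t\ge s+1\ge \lfloor s/2\rfloor+2$, this star extends to a copy of $K_{s,t}$ (center in the part of size $s$, the $\lfloor s/2\rfloor+2$ leaves and arbitrary further vertices in the part of size $t$) that already has $\lfloor s/2\rfloor+1$ repetitions; likewise a monochromatic matching of size $\lfloor s/2\rfloor+2$ embeds into a $K_{s,t}$ with $\lfloor s/2\rfloor+1$ repetitions. It is precisely here that the extra column pays off: embedding a star that carries $\lfloor s/2\rfloor+1$ repetitions costs $\lfloor s/2\rfloor+2$ vertices on one side, which $K_{s,t}$ can supply but $K_{s,s}$ cannot spend without crowding out the rest of the argument, which is in effect what lets us lower the threshold on $s$ by $4$ relative to Theorem~\ref{th:UnbalanceO1}. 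Hence we may assume $\Delta(G_i)\le \lfloor s/2\rfloor+1$ and $\nu(G_i)\le \lfloor s/2\rfloor+1$ for every color $i$, so each $G_i$ has a vertex cover of size at most $\lfloor s/2\rfloor+1$ and at most $(\lfloor s/2\rfloor+1)^2$ edges.

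The heart of the proof is then a clustering argument in the spirit of \cite[Theorem~5.1]{AxFM}, adapted to use these constraints and the slack from $t>s$. Keep one \emph{primary} edge per color and call the remaining $R$ edges \emph{surplus}; a copy of $K_{s,t}$ has at least $r$ repetitions as soon as it contains $r$ surplus edges together with their primary partners. The extremal configuration to beat is exactly the one realized by the construction: all repeated colors are matched pairs, pairwise vertex-disjoint, so the surplus edges and their partners form a matching of $K_{n,n}$; such a matching has at most $2\lfloor n/2\rfloor$ edges, forcing $R\le\lfloor n/2\rfloor$ and contradicting the odd-$s$ bound at once. When $R$ exceeds this value, some vertex must be met by two such structures, or some color is used at least three times, or some class is a nontrivial star; in each case a short case analysis should show one can assemble $\lfloor s/2\rfloor-1$ pairwise disjoint matched pairs and route the remaining two repetitions through the overlap, all inside $s$ vertices per side, producing the forbidden $K_{s,t}$. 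For even $s$ the target is one higher, $R\ge\lceil n/2\rceil+1$, and the coloring $c(a_1b_n)=c(a_nb_1)$ shows that $\lceil n/2\rceil$ surplus edges are genuinely attainable; ruling out $\lceil n/2\rceil+1$ requires tracking how two overlapping matched pairs (equivalently, a degree-$2$ vertex in the surplus-plus-partners graph) interact with the remaining pairs, and it is the resulting packing inequalities that need $s\ge 10$.

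I expect this last clustering and packing step, and in particular the tight even-$s$ count distinguishing $\lceil n/2\rceil$ from $\lceil n/2\rceil+1$ while keeping every partial gadget inside $s$ vertices per side, to be the main obstacle; the first two steps are routine once the role of the extra column is pinned down. A safer fallback, should the direct argument prove too delicate for small $s$, is to revisit the proof of \cite[Theorem~5.1]{AxFM} line by line and check that replacing $K_{s,s}$ by $K_{s,s+1}\subseteq K_{s,t}$ relaxes each inequality used there by roughly one column's worth, which should already push the thresholds down to $s\ge 3$ (odd) and $s\ge 10$ (even).
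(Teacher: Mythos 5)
Your reduction to the lower bounds is correct (the paper likewise observes that the two constructions from Theorem~\ref{th:UnbalanceO1} work verbatim for all $s\geq 3$ once $t>s$), and you have correctly located where the difficulty sits. But the proposal does not actually prove the lower bounds: the entire ``clustering and packing'' step is left as an expectation (``a short case analysis should show \dots'', ``it is the resulting packing inequalities that need $s\geq 10$''), and that step \emph{is} the theorem. Your preliminary reductions do not substitute for it: bounding $\Delta(G_i)$ and $\nu(G_i)$ by $\lfloor s/2\rfloor+1$ only yields $|E(G_i)|\leq(\lfloor s/2\rfloor+1)^2$ and hence $|C(G)|=\Omega(n^2)$ with a constant factor, which is nowhere near $n^2-\lceil n/2\rceil$. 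Likewise, your ``extremal configuration'' observation (disjoint matched pairs force $R\leq\lfloor n/2\rfloor$) only tells you that \emph{some} overlap or multiplicity occurs once $R$ exceeds $\lfloor n/2\rfloor$; converting a single such overlap into a $K_{s,t}$ with $\lfloor s/2\rfloor+1$ repetitions, and above all showing for even $s$ that $R=\lceil n/2\rceil$ is the exact breaking point, is precisely what is missing.

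For comparison, the paper's proof packages each repetition as a hyperedge of an auxiliary $4$-uniform hypergraph $\mathcal{H}$: for each color $i$ used $k_i$ times it forms $k_i-1$ hyperedges $E^i_j=e^i_1\cup e^i_j$, padded so that each meets $A$ and $B$ in exactly two vertices, so $|E(\mathcal{H})|=n^2-|C(G)|$. If all hyperedges were pairwise disjoint inside $A$ there would be at most $\lfloor n/2\rfloor$ of them; hence one finds a pair overlapping in $A$ and (by the same count) a pair overlapping in $B$. For odd $s$ the $A$-overlapping pair already sits inside a $K_{3,4}$ with two repetitions, which padded by $(s-3)/2$ further matched pairs gives the forbidden $K_{s,s+1}\subseteq K_{s,t}$ --- note the extra column is used exactly here, to accommodate a $3\times 4$ (rather than square) core. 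For even $s$ the union of the four overlapping hyperedges is shown, by a sequence of padding arguments, to be forced into a $K_{7,7}$ with \emph{exactly} four repetitions, after which every further hyperedge must avoid it and all others inside $A$, yielding $|E(\mathcal{H})|\leq 4+\lfloor(n-7)/2\rfloor=\lceil n/2\rceil$; the inequalities $6\geq\lfloor s/2\rfloor+1$ at $s=10$ and $6+(s-10)/2=\lfloor s/2\rfloor+1$ for $s\geq 12$ are where the threshold $s\geq 10$ enters. None of this case analysis, nor the crucial ``exactly four repetitions, exactly $K_{7,7}$'' rigidity, appears in your proposal, so as written it is an outline rather than a proof.
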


\begin{proof} Note that the upper bound constructions in the proof of Theorem~\ref{th:UnbalanceO1} in fact hold for all $s\geq 3$. It suffices to prove the lower bounds. Let $G=G(A, B)$ be an edge-colored $K_{n,n}$ such that every $K_{s,t}$ receives at least $st-\lfloor s/2\rfloor$ distinct colors. For a contradiction, suppose that $|C(G)|<n^2-\lfloor n/2\rfloor$ when $s\geq 3$ is odd, and $|C(G)|<n^2-\lceil n/2\rceil$ when $s\geq 10$ is even. Let $C'=\{i\in C(G)\colon\, \mbox{there exist at least two edges with color $i$ in $G$}\}$. Then $|\{e\in E(G)\colon\, c(e)\in C'\}|>\lfloor n/2\rfloor+1$. For each color $i\in C'$, let $e^i_1, \ldots, e^i_{k_i}$ be all the edges of color $i$.

We construct an auxiliary 4-uniform hypergraph $\mathcal{H}$ with $V(\mathcal{H})=V(G)$ as follows. For each $i\in C'$ and $2\leq j\leq k_i$, we form a hyperedge $E^i_j$ by taking $e^i_1\cup e^i_j$ and adding an arbitrary additional vertex if necessary so that $|E^i_j\cap A|=|E^i_j\cap B|=2$. Note that $|E(\mathcal{H})|=n^2-|C(G)|$.

If for every two distinct hyperedges $E^{i_1}_{j_1}$ and $E^{i_2}_{j_2}$ we have $E^{i_1}_{j_1}\cap E^{i_2}_{j_2}\cap A=\emptyset$, then $|E(\mathcal{H})|\leq \lfloor n/2\rfloor$. So $|C(G)|\geq n^2-\lfloor n/2\rfloor \geq n^2-\lceil n/2\rceil$, a contradiction. Hence, there exist two distinct hyperedges $E^{i_1}_{j_1}$ and $E^{i_2}_{j_2}$ with $E^{i_1}_{j_1}\cap E^{i_2}_{j_2}\cap A\neq\emptyset$, and by symmetry, there exist two distinct hyperedges $E^{i_3}_{j_3}$ and $E^{i_4}_{j_4}$ with $E^{i_3}_{j_3}\cap E^{i_4}_{j_4}\cap B\neq\emptyset$.

We first prove the lower bound in (i). Since $E^{i_1}_{j_1}$ and $E^{i_2}_{j_2}$ are two distinct hyperedges with $E^{i_1}_{j_1}\cap E^{i_2}_{j_2}\cap A\neq\emptyset$, the subgraph of $G$ induced by $E^{i_1}_{j_1}\cup E^{i_2}_{j_2}$ is a subgraph of $K_{3,4}$ with at least two color repetitions. This is a contradiction when $s=3$. Thus we may assume that $s\geq 5$. Note that $|\{e\in E(G)\colon\, c(e)\in C'\}|>\lfloor n/2\rfloor+1$. By adding $s-3$ additional edges with at least $(s-3)/2$ color repetitions, we get a subgraph of $K_{s,s+1}$ with at least $2+(s-3)/2=\lfloor s/2\rfloor+1$ color repetitions, a contradiction.

We next prove the lower bound in (ii). Note that the subgraph of $G$ induced by $\bigcup_{\ell=1}^{4}E^{i_{\ell}}_{j_{\ell}}$ is a subgraph of $K_{7,7}$ with at least four color repetitions. If $G\big[\bigcup_{\ell=1}^{4}E^{i_{\ell}}_{j_{\ell}}\big]$ has at least five color repetitions, then by adding $s-7$ additional edges with at least $\lfloor(s-7)/2\rfloor$ color repetitions, we get a subgraph of $K_{s,s}$ with at least $5+\lfloor(s-7)/2\rfloor=\lfloor s/2\rfloor+1$ color repetitions, a contradiction. Thus $G\big[\bigcup_{\ell=1}^{4}E^{i_{\ell}}_{j_{\ell}}\big]$ has exactly four color repetitions. This implies that $\mathcal{H}[\bigcup_{\ell=1}^{4}E^{i_{\ell}}_{j_{\ell}}]$ has exactly four hyperedges. Moreover, if $G\big[\bigcup_{\ell=1}^{4}E^{i_{\ell}}_{j_{\ell}}\big]\subseteq K_{6,7}$, then by adding $s-6$ additional edges with at least $(s-6)/2$ color repetitions, we get a subgraph of $K_{s,s+1}$ with at least $4+(s-6)/2=\lfloor s/2\rfloor+1$ color repetitions, a contradiction. Thus $G[\bigcup_{\ell=1}^{4}E^{i_{\ell}}_{j_{\ell}}]=K_{7,7}$.

If there exists an hyperedge $E^{i_5}_{j_5}\in E(\mathcal{H})\setminus \big\{E^{i_1}_{j_1}, E^{i_2}_{j_2}, E^{i_3}_{j_3}, E^{i_4}_{j_4}\big\}$ such that $E^{i_5}_{j_5}$ and some hyperedge in $\big\{E^{i_1}_{j_1}, E^{i_2}_{j_2}, E^{i_3}_{j_3}, E^{i_4}_{j_4}\big\}$ have a common vertex in $A$, then $G\big[\bigcup_{\ell=1}^{5}E^{i_{\ell}}_{j_{\ell}}\big]$ is a subgraph of $K_{8,9}$ with at least five color repetitions. By adding $s-8$ additional edges to $G\big[\bigcup_{\ell=1}^{5}E^{i_{\ell}}_{j_{\ell}}\big]$, we can get a subgraph of $K_{s,s+1}$ with at least $5+(s-8)/2=\lfloor s/2\rfloor+1$ color repetitions. This is a contradiction. Thus for any $E^{i_5}_{j_5}\in E(\mathcal{H})\setminus \big\{E^{i_1}_{j_1}, E^{i_2}_{j_2}, E^{i_3}_{j_3}, E^{i_4}_{j_4}\big\}$, we have $E^{i_5}_{j_5}\cap \big(\bigcup_{\ell=1}^{4}E^{i_{\ell}}_{j_{\ell}}\big)\cap A=\emptyset$.

If there exist two distinct hyperedges $E^{i_5}_{j_5}, E^{i_6}_{j_6}\in E(\mathcal{H})\setminus \big\{E^{i_1}_{j_1}, E^{i_2}_{j_2}, E^{i_3}_{j_3}, E^{i_4}_{j_4}\big\}$ with $E^{i_5}_{j_5}\cap E^{i_6}_{j_6}\cap A\neq\emptyset$, then $G\big[\bigcup_{\ell=1}^{6}E^{i_{\ell}}_{j_{\ell}}\big]$ is a subgraph of $K_{10,11}$ with at least six color repetitions. This is a contradiction when $s=10$. For $s\geq 12$, by adding $s-10$ additional edges with at least $(s-10)/2$ color repetitions, we get a subgraph of $K_{s,s+1}$ with at least $6+(s-10)/2=\lfloor s/2\rfloor+1$ color repetitions, a contradiction. Therefore, for any two distinct hyperedges $E^{i_5}_{j_5}, E^{i_6}_{j_6}\in E(\mathcal{H})\setminus \big\{E^{i_1}_{j_1}, E^{i_2}_{j_2}, E^{i_3}_{j_3}, E^{i_4}_{j_4}\big\}$, we have $E^{i_5}_{j_5}\cap E^{i_6}_{j_6}\cap A=\emptyset$. Then $|E(\mathcal{H})|\leq 4+\lfloor(n-7)/2\rfloor=\lceil n/2\rceil$. So $|C(G)|\geq n^2-\lceil n/2\rceil$, and this completes the proof.
\end{proof}

\begin{remark}\label{re:UnbalanceO1+} {\rm In Theorem~\ref{th:UnbalanceO1+} (ii), the lower bound $s\geq 10$ on $s$ is sharp. In fact, the following construction shows that $r(K_{n,n}, K_{8,9}, 68)\leq n^2-4\lfloor n/7\rfloor$. Let $G=G(A, B)$ be a copy of $K_{n,n}$, where $A=\{a_1, a_2, \ldots, a_n\}$ and $B=\{b_1, b_2, \ldots, b_n\}$. We color the edges of $G$ such that for each $1\leq i\leq \lfloor n/7\rfloor$, we have $c(a_{7(i-1)+1}b_{7(i-1)+1})=c(a_{7(i-1)+2}b_{7(i-1)+3})=4(i-1)+1$, $c(a_{7(i-1)+1}b_{7(i-1)+2})=c(a_{7(i-1)+3}b_{7(i-1)+4})=4(i-1)+2$, $c(a_{7(i-1)+4}b_{7(i-1)+5})=c(a_{7(i-1)+6}b_{7(i-1)+6})=4(i-1)+3$ and $c(a_{7(i-1)+5}b_{7(i-1)+5})=c(a_{7(i-1)+7}b_{7(i-1)+7})=4(i-1)+4$, and all the other edges are colored by $n^2-8\lfloor n/7\rfloor$ new distinct colors.
}
\end{remark}

\section{Threshold for $r(K_{n,n}, K_{s,t}, q)=n^2-O(n)$}
\label{ap:On}

The following result implies that the threshold for $r(K_{n,n}, K_{s,t}, q)=n^2-O(n)$ is between $st-\lf\frac{s+t-1}{3}\rf+1$ and $st-\lf\frac{2s-1}{3}\rf+1$ when $t\geq s\geq 2$ and $s+t\geq 8$. Moreover, if $t\geq 2(s-1)$ and $s\geq 4$, then the threshold is between $st-\lf\frac{s+t-1}{3}\rf+1$ and $st-s+2$. In particular, if $t= 2(s-1)$ and $s\geq 4$, then $q=st-\lf\frac{s+t-1}{3}\rf+1$ is the threshold for $r(K_{n,n}, K_{s,t}, q)=n^2-O(n)$.

\begin{theorem}\label{th:UnbalanceOn} For any integers $t\geq s\geq 2$ and sufficiently large $n$, the following statements hold.
\begin{itemize}
\item[{\rm (i)}] $r\left(K_{n,n}, K_{s,t}, st-\lf\frac{2s-1}{3}\rf+1\right)>n^2-2\lf\frac{s-2}{3}\rf(n-1)$.
\item[{\rm (ii)}] If $s\geq 3$ and $t\geq 2(s-1)$, then $r\left(K_{n,n}, K_{s,t}, st-s+2\right)\geq n^2-(s-2)n+1$.
\item[{\rm (iii)}] If $s+t\geq 8$, then $r\left(K_{n,n}, K_{s,t}, st-\lf\frac{s+t-1}{3}\rf\right)<n^2-\Theta(n^{1+3/(s+t-3)})$.
\end{itemize}

\end{theorem}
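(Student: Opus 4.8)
The three statements are of rather different flavors, so I would prove them one at a time; part~(i) should require essentially no new argument. Set $\ell\colonequals\lf\frac{2s-1}{3}\rf-1$. The monotonicity remark recorded just after Corollary~\ref{co:Kss+1} gives $r(K_{n,n},K_{s,s},s^2-\ell)\le r(K_{n,n},K_{s,t},st-\ell)$, while the row of Table~\ref{tab:rnn} at $q=p^2-\lf\frac{2p-1}{3}\rf+1$, applied with $p=s$, gives $r(K_{n,n},K_{s,s},s^2-\ell)>n^2-2\lf\frac{s-2}{3}\rf(n-1)$. Chaining the two inequalities proves~(i).

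Part~(ii) is the one that genuinely uses $t\ge 2(s-1)$. Suppose for a contradiction that $K_{n,n}$ has a $(K_{s,t},st-s+2)$-coloring with at most $n^2-(s-2)n$ colors, hence with at least $(s-2)n$ color repetitions; it suffices to find a copy of $K_{s,t}$ containing $s-1$ of them, since such a copy uses at most $st-s+1$ colors. The decisive point is that any $s-1$ repetitions meet at most $2(s-1)\le t$ vertices on each side, so the side of size $t$ is never an obstruction and everything reduces to a counting argument on the $n$-vertex side~$A$. I would first discharge a short list of local configurations: if some set of fewer than $s$ vertices of $A$ already spans $s-1$ repetitions (in particular if one vertex of $A$ is the center of $s-1$ monochromatic cherries, or if a pair of vertices of $A$ is joined by $s-1$ repetitions), or if at least $s$ vertices of $A$ each carry a monochromatic cherry, then in each case one directly writes down the desired $K_{s,t}$, padding the $B$-side up to size~$t$. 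Once all of these are excluded, all but $O_s(1)$ of the repetitions are ``crossing'' (pairs of vertex-disjoint edges of a common color), and the crossing repetitions form a loopless multigraph $\mathcal M$ on~$A$ with $(s-2)n-O_s(1)$ edges and edge-multiplicity at most $s-2$, so its underlying simple graph has at least $n-O_s(1)$ edges on $n$ vertices. A pigeonhole on component sizes then finishes: if every component of that simple graph had fewer than $s$ vertices then, by the reductions, each would contain at most $s-2$ edges of $\mathcal M$, so $|E(\mathcal M)|\le\frac{s-2}{2}n$, contradicting $|E(\mathcal M)|\ge(s-2)n-O_s(1)$ for $s\ge 3$ and $n$ large; hence some component has at least $s$ vertices, and choosing $s$ of its vertices that span a subtree exhibits $s-1$ crossing repetitions whose $A$-endpoints lie in that $s$-set, i.e., a $K_{s,t}$ with $s-1$ repetitions.

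Part~(iii) is an upper bound and therefore asks for a construction. I would adapt the construction of Axenovich, F\"{u}redi and Mubayi underlying the entry $r(K_{n,n},K_{p,p},p^2-\lf\frac{2p-1}{3}\rf)<n^2-c_pn^{1+\varepsilon_p}$ of Table~\ref{tab:rnn}, reworked so as to use that $K_{s,t}$ is thin in one direction. Starting from the rainbow coloring of $K_{n,n}$, the plan is to fix a sparse bipartite host graph $\Gamma\subseteq K_{n,n}$ whose edge set is partitioned into small bundles, and to recolor each bundle by one new color; this drops the number of colors by one per bundle, and the sparseness of $\Gamma$ is to be chosen so that every $s\times t$ sub-rectangle of $K_{n,n}$ meets at most $\lf\frac{s+t-1}{3}\rf-1$ bundles. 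The sparseness one can afford is governed by a Tur\'{a}n-type bound for a degenerate bipartite graph, for instance for even cycles or theta graphs as in Theorem~\ref{th:Tu}, and balancing $|E(\Gamma)|$ against the ``few bundles per box'' requirement should produce $\Theta\big(n^{1+3/(s+t-3)}\big)$ bundles, hence that many repetitions. This last step is the one I expect to be the real obstacle: since the exponent $1+\frac{3}{s+t-3}$ depends on $s+t$ and not on $2\max\{s,t\}$, the balanced result cannot simply be quoted, and one must choose $\Gamma$ and verify the box condition by hand — the trade-off between maximizing $|E(\Gamma)|$ and keeping all $s\times t$ boxes uniformly sparse being the delicate part. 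The counting in~(ii), though longer to write out, is elementary once the reductions are arranged in the right order.
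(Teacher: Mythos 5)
Part (i) of your proposal coincides with the paper's proof: both reduce to the balanced case via $K_{s,s}\subseteq K_{s,t}$ and quote the Axenovich--F\"{u}redi--Mubayi lower bound for $r(K_{n,n},K_{s,s},s^2-\lf\frac{2s-1}{3}\rf+1)$.

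Part (ii) is correct but takes a genuinely different route. The paper considers the spanning subgraph $G'$ of edges whose color is repeated somewhere in $G$, observes that $|E(G')|>(s-2)n$ forces a star $K_{1,s-1}$ inside $G'$, and converts that single star into a $K_{s,t}$ with $s-1$ repetitions by adjoining, for each star edge whose color is not repeated within the star, one mate elsewhere in $G$, together with one small disjoint block supplying the remaining repetitions; the arithmetic $1+|E''|+|E'|=s$ and $s-1+|E''|+|E'|=2(s-1)\le t$ is exactly where $t\ge 2(s-1)$ enters. You instead project the $(s-2)n$ repetitions onto a multigraph on the $A$-side, discharge the local configurations, and extract a connected $s$-set spanning $s-1$ repetitions. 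Your route works, but it needs one more reduction than you list: a single vertex of $A$ can carry many cherry-repetitions coming from color classes of size larger than two, so to discharge "$\ge s-1$ cherry-repetitions at one vertex" you must truncate the color classes to exhibit a sub-star with at most $2(s-1)\le t$ leaves and still $s-1$ repetitions; this is routine but not automatic. The paper's single-star argument avoids the component analysis entirely and is shorter.

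Part (iii) has a genuine gap. You correctly reduce the upper bound to constructing a large system of disjoint edge-pairs in $K_{n,n}$ (your "bundles") such that every $s\times t$ box contains few complete pairs (at most $\ell\colonequals\lf\frac{s+t-1}{3}\rf$ of them already suffices, not $\ell-1$), but you explicitly leave the construction open, and the route you propose --- balancing against Tur\'{a}n numbers of even cycles or theta graphs --- does not produce the exponent $1+3/(s+t-3)$. The paper's proof invokes a theorem of Brown, Erd\H{o}s and S\'{o}s: there is a $4$-uniform hypergraph on $2n$ vertices with $cn^{4-(s+t-4)/\ell}$ hyperedges in which every set of $s+t$ vertices spans at most $\ell$ hyperedges. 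After passing to a linear sub-hypergraph (every pair of vertices in at most one hyperedge, losing only a constant factor) and randomly splitting the vertex set into the two sides of $K_{n,n}$, each surviving hyperedge $\{a_1,a_2,b_1,b_2\}$ is turned into one color repetition $c(a_1b_1)=c(a_2b_2)$, and the inequality $4-(s+t-4)/\ell\ge 1+3/(s+t-3)$ yields the claimed bound. Without this (or an equivalent) supply of a dense yet locally sparse pair system, your construction cannot be completed.
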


\begin{proof} (i) Since $K_{s,s}\subseteq K_{s,t}$, we have $r(K_{n,n}, K_{s,t}, st-\lfloor(2s-1)/3\rfloor+1)\geq r(K_{n,n}, K_{s,s},$ $s^2-\lfloor(2s-1)/3\rfloor+1)>n^2-2\lfloor(s-2)/3\rfloor(n-1)$ (see \cite[Theorem~6.1]{AxFM}).

(ii) Let $G$ be an edge-colored $K_{n,n}$ such that every $K_{s,t}$ receives at least $st-s+2$ distinct colors. Let $C'=\{i\in C(G)\colon\, \mbox{there exist at least two edges with color $i$ in $G$}\}$. We consider the spanning subgraph $G'$ of $G$ with $E(G')=\{e\in E(G)\colon\, c(e)\in C'\}$. Suppose $|E(G')|> (s-2)n$. Then $G'$ contains a copy $S$ of $K_{1,s-1}$. Let $E'=\{e\in E(S)\colon\, c(e)=c(f) \mbox{\ for some $f\in E(S)\setminus\{e\}$}\}$ and $E''=E(S)\setminus E'$. Then there exist $|E''|$ edges $e_1, \ldots, e_{|E''|}\in E(G)\setminus E(S)$ such that $\{c(e_i)\colon\, 1\leq i\leq |E''|\}=\{c(e)\colon\, e\in E''\}$. Moreover, $S$ has at least $\lceil |E'|/2\rceil$ color repetitions. Note that $s-1+|E''|+|E'|=2(s-1)\leq t$, $1+|E''|+|E'|=s$ and $\lceil |E'|/2\rceil+|E''|+\lfloor |E'|/2\rfloor=s-1$. Then $E(S)\cup \{e_1, \ldots, e_{|E''|}\}$ together with an additional $K_{|E'|, |E'|}$ with at least $\lfloor |E'|/2\rfloor$ color repetitions (this is possible since $|E(G')|> (s-2)n$ and $n$ is large enough) forms a subgraph of $K_{s,t}$ with at least $s-1$ color repetitions, a contradiction. Thus $|E(G')\leq (s-2)n$, so $|C(G)|\geq n^2-(s-2)n+1$.

(iii) Let $\ell=\lfloor(s+t-1)/3\rfloor$. Brown, Erd\H{o}s and S\'{o}s (see \cite[Section~4]{BrES}) proved that there exists a 4-uniform hypergraph $\mathcal{H}$ on $2n$ vertices with $cn^{4-(s+t-4)/\ell}$ hyperedges in which every subset of $s+t$ vertices spans at most $\ell$ hyperedges, where $c>0$ is independent of $n$. Note that $n^{1+3/(s+t-3)}\leq n^{4-(s+t-4)/\ell}\leq n^{1+9/(s+t-1)}$. For each pair of vertices $(u,v)$ in $\mathcal{H}$, let $m(u, v)$ be the number of hyperedges containing both $u$ and $v$ in $\mathcal{H}$. We claim that for any $(u,v)$, we have $m(u, v)\leq \ell$. Otherwise, since $2+2(\ell+1)= 2(\lfloor(s+t-1)/3\rfloor+2)\leq s+t$, there exist $s+t$ vertices spanning at least $\ell+1$ hyperedges in $\mathcal{H}$, a contradiction.

Let $M=\big\{(u, v)\in{V(\mathcal{H}) \choose 2}\colon\, m(u,v)\geq 1\big\}$, and $x=|M|$. Since $1\leq m(u, v)\leq \ell$ for any $(u, v)\in M$, we have $x=\Theta(|E(\mathcal{H})|)$. Among all the spanning sub-hypergraphs of $\mathcal{H}$ satisfying that every pair of vertices are contained in at most one hyperedges, we choose one with maximum number of edges, denoted by $\mathcal{H}'$. For each pair of vertices $(u,v)$ in $\mathcal{H}'$, let $m'(u, v)$ be the number of hyperedges containing both $u$ and $v$ in $\mathcal{H}'$. Let $M'=\big\{(u, v)\in{V(\mathcal{H}) \choose 2}\colon\, m'(u,v)= 1\big\}$, and $y=|M'|$. Note that $M'\subseteq M$. Let $z=|M\setminus M'|=x-y$. We claim $z\leq 5(\ell-1)y$. Otherwise, suppose $z\geq (\ell-1)y+1$. Note that for any $(u, v)\in M'$, we have $1= m'(u, v)\leq m(u, v)\leq \ell$, and for any $(u, v)\in M\setminus M'$, we have $0= m'(u, v)< m(u, v)\leq \ell$. Thus in $\mathcal{H}$, there are at most $(\ell-1)y$ hyperedges containing one pair of vertices in $M'$ and one pair of vertices in $M\setminus M'$. Thus in $\mathcal{H}$, there exists a hyperedge containing two pairs of vertices in $M\setminus M'$ but not pair of vertices in~$M'$. This contradicts the choice of $\mathcal{H}'$. Thus $z\leq 5(\ell-1)y$, so $y\geq x/\ell$. Hence, $|E(\mathcal{H}')|=\Theta(y)=\Theta(x)=\Theta(|E(\mathcal{H})|)$.

We randomly partition $V(\mathcal{H}')$ into two parts $A$ and $B$ with $|A|=|B|=n$. Let $e(A,B)$ denote the number of hyperedges in $\mathcal{H}'$ containing two vertices in $A$ and two vertices in $B$. For any hyperedge $e \in E(\mathcal{H}')$, let $X$ denote the event that $|e\cap A|=|e\cap B|=2$. Then the probability that $X$ appears is ${4\choose 2}/2^4=3/8$. Thus the expectation of $e(A,B)$ is $3|E(\mathcal{H}')|/8$. Hence, there exists a partition $(A,B)$ of $V(\mathcal{H}')$ with $|A|=|B|=n$ such that the number of hyperedges in $\mathcal{H}'$ containing two vertices in $A$ and two vertices in $B$ is $\Theta(|E(\mathcal{H}')|)$. Let $\mathcal{H}''$ be the spanning sub-hypergraph of $\mathcal{H}'$ consisting of these $\Theta(|E(\mathcal{H}')|)$ hyperedges. Denoted by $E(\mathcal{H}'')=\{e_1, \ldots, e_{|E(\mathcal{H}'')|}\}$, where for each $i\in [|E(\mathcal{H}'')|]$,  we have $e_i=\{a^i_1, a^i_2, b^i_1, b^i_2\}$, $a^i_1, a^i_2\in A$ and $b^i_1, b^i_2\in B$.

We form an edge-colored copy $G$ of $K_{n,n}$ with bipartition $(A, B)$ as follows. For any $i\in [|E(\mathcal{H}'')|]$, we color the edges $a^i_1b^i_1$ and $a^i_2b^i_2$ using color $i$. We color all the other edges using $n^2-2|E(\mathcal{H}'')|$ new distinct colors. Since every subset of $s+t$ vertices spans at most $\ell$ hyperedges in $\mathcal{H}''$, every $K_{s,t}$ receives at least $st-\ell$ colors in $G$. Moreover, $|C(G)|=n^2-|E(\mathcal{H}'')|= n^2-\Theta(n^{4-(s+t-4)/\ell})\leq n^2-\Theta(n^{1+3/(s+t-3)})$. This completes the proof.
\end{proof}


\begin{thebibliography}{99}

\bibitem{Axe} M. Axenovich, A generalized Ramsey problem, {\it Discrete Math.} {\bf 222} (2000), 247--249.

\bibitem{AxFM} M. Axenovich, Z. F\"{u}redi and D. Mubayi, On generalized Ramsey theory: the bipartite case, {\it J. Combin. Theory Ser. B} {\bf 79} (2000), 66--86.

\bibitem{BEHK} J. Balogh, S. English, E. Heath and R.A. Krueger, Lower bounds on the Erd\H{o}s-Gy\'{a}rf\'{a}s problem via color energy graphs,  {\it J. Graph Theory} {\bf 103} (2023), 378--409.

\bibitem{Bol} B. Bollob\'{a}s, Modern graph theory, {\it Springer-Verlag, New York} (1998).

\bibitem{BrES} W.G. Brown, P. Erd\H{o}s and V.T. S\'{o}s, Some extremal problems on $r$-graphs, in: {\it New Directions in the Theory of Graphs} (Proc. 3rd Ann Arbor Conf. on Graph Theory, University of Michigan, 1971), Academic Press, New York, (1973).
%

\bibitem{BuTa} B. Bukh and M. Tait, Tur\'{a}n numbers of theta graphs, {\it Combin. Probab. Comput.} {\bf 29} (2020), 495--507.

\bibitem{Cam} A. Cameron, An explicit edge-coloring of $K_n$ with six colors on every $K_5$, {\it Electron. J. Combin.} {\bf 26} (2019), \#P4.13.

\bibitem{CaHe} A. Cameron and E. Heath, A $(5,5)$-colouring of $K_n$ with few colours, {\it Combin. Probab. Comput.} {\bf 27} (2018), 892--912.

\bibitem{ChGr75} F.R.K. Chung and R.L. Graham, On multicolor Ramsey numbers for bipartite graphs, {\it J. Combin. Theory Ser. B} {\bf 18} (1975), 164--169.

\bibitem{CFLS1} D. Conlon, J. Fox, C. Lee and B. Sudakov, On the grid Ramsey problem and related questions, {\it Int. Math. Res. Not.} {\bf 2015} (2015), 8052--8084.

\bibitem{CFLS2} D. Conlon, J. Fox, C. Lee and B. Sudakov, The Erd\H{o}s-Gy\'{a}rf\'{a}s problem on generalized Ramsey numbers, {\it Proc. London Math. Soc.} {\bf 110} (2015), 1--18.

\bibitem{CoFS} D. Conlon, J. Fox, and B. Sudakov, Recent developments in graph Ramsey theory, in: {\it Surveys in Combinatorics 2015}, Cambridge Univ. Press, (2015), 49--118.

\bibitem{Cor} K. Corr\'{a}di, Problem at Schweitzer competition, {\it Mat. Lapok} {\bf 20} (1969), 159--162.

\bibitem{Erd46} P. Erd\H{o}s, On sets of distances of $n$ points, {\it Amer. Math. Monthly} {\bf 53} (1946), 248--250.

\bibitem{Erd64} P. Erd\H{o}s, On extremal problems of graphs and generalized graphs, {\it Israel J. Math.} {\bf 2} (1964), 183--190.

\bibitem{Erd1} P. Erd\H{o}s, Problems and results on finite and infinite graphs, in: {\it Recent advances in graph theory} (Proc. Second Czechoslovak Sympos., Prague, 1974), Academia, Prague, (1975), 183--192.

\bibitem{Erd2} P. Erd\H{o}s, Solved and unsolved problems in combinatorics and combinatorial number theory, {\it Congr. Numer.} {\bf 32} (1981), 49--62.

\bibitem{ErGy} P. Erd\H{o}s and A. Gy\'{a}rf\'{a}s, A variant of the classical Ramsey problem, {\it Combinatorica} {\bf 17} (1997), 459--467.

\bibitem{ErSi} P. Erd\H{o}s and M. Simonovits, Compactness results in extremal graph theory, {\it Combinatorica} {\bf 2} (1982), 275--288.

\bibitem{FaSi} R.J. Faudree and M. Simonovits, On a class of degenerate extremal graph problems, {\it Combinatorica} {\bf 3} (1983), 83--93.

\bibitem{FiPS} S. Fish, C. Pohoata and A. Sheffer, Local properties via color energy graphs and forbidden configurations, {\it SIAM J. Discrete Math.} {\bf 34} (2020), 177--187.

\bibitem{FoPS} J. Fox, J. Pach and A. Suk, The Schur-Erd\H{o}s problem for semi-algebraic colorings, {\it Israel J. Math.} {\bf 239} (2020), 39--57.

\bibitem{FoSu} J. Fox and B. Sudakov, Ramsey-type problem for an almost monochromatic $K_4$, {\it SIAM J. Discrete Math.} {\bf 23} (2009), 155--162.

\bibitem{GuKa} L. Guth and N.H. Katz, On the Erd\H{o}s distinct distances problem in the plane, {\it Ann. of Math.} {\bf 181} (2015), 155--190.

\bibitem{Jan} O. Janzer, Improved bounds for the extremal number of subdivisions, {\it Electron. J. Combin.} {\bf 26} (2019), \#P3.3.

\bibitem{Juk} S. Jukna, Extremal combinatorics: with applications in computer science (2nd edition), {\it Springer, Heidelberg} (2011).

\bibitem{KoST} T. K\H{o}v\'{a}ri, V. S\'{o}s and P. Tur\'{a}n, On a problem of K. Zarankiewicz, {\it Colloq. Math.} {\bf 3} (1954), 50--57.

\bibitem{Kru} R.A. Krueger, Generalized Ramsey numbers: forbidding paths with few colors, {\it Electron. J. Combin.} {\bf 27} (2020), \#P1.44.

\bibitem{LeTr} C. Lee and B. Tran, The sub-exponential transition for the chromatic generalized Ramsey numbers, {\it Combinatorica} {\bf 39} (2019), 355--376.

\bibitem{LiBW} X.H. Li, H. Broersma and L.G. Wang, The Erd\H{o}s-Gy\'{a}rf\'{a}s function with respect to Gallai-colorings, {\it J. Graph Theory} {\bf 101} (2022), 242--264.

\bibitem{Mub04} D. Mubayi, An explicit construction for a Ramsey problem, {\it Combinatorica} {\bf 24} (2004), 313--324.

\bibitem{Mub16} D. Mubayi, Coloring triple systems with local conditions, {\it J. Graph Theory} {\bf 81} (2016), 307--311.

\bibitem{MuSu} D. Mubayi and A. Suk, A survey of hypergraph Ramsey problems, in: {\it Discrete Mathematics and Applications}, Springer, Cham, (2020), 405--428.

\bibitem{NaVe} A. Naor and J. Verstra\"{e}te, A note on bipartite graphs without $2k$-cycles, {\it Combin. Probab. Comput.} {\bf 14} (2005), 845--849.

\bibitem{PoSh} C. Pohoata and A. Sheffer, Local properties in colored graphs, distinct distances, and difference sets, {\it Combinatorica} {\bf 39} (2019), 705--714.

\bibitem{SaSe1} G.N. S\'{a}rk\"{o}zy and S. Selkow, On edge colorings with at least $q$ colors in every subset of $p$ vertices, {\it Electron. J. Combin.} {\bf 8} (2001), \#R9.

\bibitem{SaSe2} G.N. S\'{a}rk\"{o}zy and S. Selkow, An application of the regularity lemma in generalized Ramsey theory, {\it J. Graph Theory} {\bf 44} (2003), 39--49.

\bibitem{SaSe3} G.N. S\'{a}rk\"{o}zy and S. Selkow, On bipartite generalized Ramsey theory, {\it Ars Combin.} {\bf 68} (2003), 57--64.

\bibitem{Sid89} A.F. Sidorenko, Asymptotic solution for a new class of forbidden $r$-graphs, {\it Combinatorica} {\bf 9} (1989), 207--215.

\end{thebibliography}
\end{document}